\def\thesection{\arabic{section}}
\def\theequation{\thesection.\arabic{equation}}
\def\R{\mathbb{R}}
\newcommand{\e}{\epsilon}
\newcommand{\Om} {\Omega}
\newcommand{\De} {\Delta}
\newcommand{\la} {\lambda}
\newcommand{\noi} {\noindent}
\newcommand{\mb} {\mathbb}
\newcommand{\mc} {\mathcal}
\markboth{\small } {\small Polyharmonic Kirchhoff Choquard equation with singular weights}
\def\theequation{\@arabic{\c@section}.\@arabic{\c@equation}}
\newcommand{\QED}{\rule{2mm}{2mm}}
\newtheorem{Theorem}{Theorem}[section]
\newtheorem{Lemma}[Theorem]{Lemma}
\newtheorem{Proposition}[Theorem]{Proposition}
\newtheorem{Definition}[Theorem]{Definition}
\newtheorem{Example}{Example}
\newcommand{\eqdef}{\stackrel{{\mathrm {def}}}{=}}
\begin{document}

{\vspace{0.01in}}

\title
{ \sc Polyharmonic Kirchhoff problems involving exponential non-linearity of Choquard type with singular weights.}

\author{~~R. Arora,~~J. Giacomoni\footnote{LMAP (UMR E2S-UPPA CNRS 5142) Bat. IPRA, Avenue de l'Universit\'e F-64013 Pau, France. email: rakesh.arora@univ-pau.fr, jacques.giacomoni@univ-pau.fr},~~ T. Mukherjee\footnote{Tata Institute of Fundamental Research(TIFR) Centre for Applicable Mathematics, Bangalore, India. e-mail: tulimukh@gmail.com}~ and ~K. Sreenadh\footnote{Department of Mathematics, Indian Institute of Technology Delhi, Hauz Khaz, New Delhi-110016, India. e-mail: sreenadh@gmail.com} }

\date{}

\maketitle

\begin{abstract}
\noi In this work, we study the higher order Kirchhoff type Choquard equation $(KC)$ involving a critical exponential non-linearity and singular weights. We prove the existence of solution to $(KC)$ using Mountain pass Lemma in light of Moser-Trudinger and singular Adams-Moser inequalities. In the second part of the paper, using the Nehari manifold technique and minimization over its suitable subsets, we  prove the existence of at least two solutions to the  Kirchhoff type Choquard equation $(\mathcal{P_{\la,M}})$ involving convex-concave type non-linearity. \\ \\
\noi \textbf{Key words:} Doubly non local equation, Kirchhoff term, Choquard non-linearity with singular weights, Singular Adams-Moser inequality, Nehari Manifold, Polyharmonic operator.
\vspace{.2cm}\\
\noi \textit{2010 Mathematics Subject Classification:} 35R10, 35R09, 35J60, 35A15, 35J35.

\end{abstract}

\section{Introduction}
The main objective of this paper is to prove the existence of non-trivial weak solution of the following Kirchhoff type Choquard equation with exponential non-linearity and singular weights:
\begin{equation*}
     (KC)
     \left\{
         \begin{alignedat}{2}
             {} -M\left(\int_\Om|\nabla^m u|^{2}~dx\right)\De^m u
             & {}=  \left(\int_{\Om} \frac{F(y,u)}{{ |y|^\alpha} |x-y|^\mu}dy\right)\frac{f(x,u)}{|x|^{\alpha}}~dx,\;
             && \quad\mbox{ in }\, \Omega ,
             \\
             u=\nabla u=\dots =\nabla^{m-1}u & {}= 0
             && \quad\mbox{ on }\, \partial\Omega,
          \end{alignedat}
     \right.
\end{equation*}
where $m\in \mathbb N$, $n = 2m$, $\mu \in (0,n)$, $ 0<\alpha <\min\{\frac{n}{2}, n-\mu\} $, $\Om$ is a bounded domain in $\mb R^n$ with smooth boundary, $ M:\mb R^+ \to \mb R^+$ and $f: \Omega \times \mb R \to \mb R$ are continuous functions satisfying suitable assumptions specified in details later. The function $F$ denotes the primitive of $f$ with respect to the second variable.\\
We also study the existence of weak solutions of a Kirchhoff type Choquard equation with convex-concave sign changing non-linearity:
 \begin{equation*}
     (\mathcal{P}_{\la, \mathcal{M}})
     \left\{
         \begin{alignedat}{2}
             {} -M\left(\int_\Om|\nabla^m u|^2~dx\right)\De^m u
             & {}=  \lambda h(x)|u|^{q-1} u + \left( \int_{\Omega}\frac{F(u)}{|x-y|^\mu |y|^\alpha}~dy\right)\frac{f(u)}{|x|^\alpha}
             && \ \ \mbox{in }\, \Omega,
             \\
             u= \nabla u= \dots = \nabla^{m-1} u & {}= 0
             && \  \mbox{on }\, \partial\Omega,
          \end{alignedat}
     \right.
\end{equation*}
where $f(u)=u|u|^p \exp(|u|^{\gamma})$, $0 < q< 1,\ 2< p$, $\gamma \in (1, 2)$ and $F(t)=\int_{0}^t f(s)~ds$. In this case, we assume $M(t)=at+b$ where $a, b>0$ and $h \in L^{r}(\Omega)$ where {$r =\frac{p+2}{q+1}$} is such that $h^+ \not\equiv 0$.\\
The boundary value problems involving Kirchhoff term appear in various physical and biological
systems. In $1883$, Kirchhoff observed these kinds of non-local phenomena  in the study of string or membrane vibrations to describe the transversal oscillations, by considering the effect of changes in the length of the string. In the case of Laplacian operator, problems having Kirchhoff term arise from the theory of thin plates and describe the deflection of the middle surface of a p-power like elastic isotropic flat plate of uniform thickness. Precisely, $M(\|u\|^p)$ represents the non-local flexural rigidity of the plate depending continuously on $\|u\|^p$ of the defection $u$ in the presence of non-linear source forces. \vspace{.2cm}\\
Initially in \cite{ACF}, Alves et al. considered the following non-local Kirchhoff problem with Sobolev type critical non-linearities
$$-M\left(|\nabla u|^2 dx \right) \Delta u= \lambda f(x,u) +u^5 \ \text{in}\ \Omega,\;\; u=0 \ \text{on}\ \partial\Omega,$$
where
$\Omega \subset \mathbb{R}^3$ is a bounded domain with smooth boundary and $f$ has sub-critical growth
at $\infty$. Using the Mountain-pass Lemma and the compactness analysis of local Palais-Smale sequences,
authors showed the existence of solutions for large $\lambda$. Corr\"ea and Figueiredo \cite{CF} studied the existence of positive solutions
for Kirchhoff equations involving p-laplacian operator with critical or super critical Sobolev type non-linearity. Later on, Figueiredo \cite{Fi} and Goyal et al. \cite{GMS} studied the  Kirchhoff problem with critical exponential growth non-linearity. Recently in \cite{AGMS}, authors have studied the Kirchhoff equation with exponential non-linearity of Choquard type and established the existence and multiplicity of solutions. \vspace{.2cm}\\
Problems involving polyharmonic operators and polynomial type critical growth non-linearities have been broadly studied by many authors till now, see \cite{Ga,GWZ,Gr,PS} for instance. In \cite{PS}, Pucci and Serrin have considered the following critical growth problem in an open ball of $\mathbb{R}^n$:
\begin{equation*}
(-\Delta)^K u= \lambda u + |u|^{s-1} u \ \text{in}\  B,\;\; u=Du= \dots = D^{K-1} u=0 \ \text{on}\ \partial B,
\end{equation*}
where $K-1 \in \mathbb{N},\ s = \frac{n+2K}{n-2K}$, $n > 2K$.  They showed the existence of nontrivial radial solutions to the above problem in a suitable range of $\lambda$. In \cite{Ga}, Gazzola studied the existence result for the same critical exponent polyharmonic problem as above while considering a lower order perturbation term having a subcritical growth at infinity instead of '$\la$u'. We cite   \cite{GS,LO,LL,ZD} and references therein for existence results on polyharmonic equations with exponential type non-linearity.\\ The multiplicity of solutions for elliptic partial differential equations involving polynomial type non-linearity and sign-changing weight functions has been extensively studied in
\cite{AH,BZ,DP,Wu,Wu1}) using the Nehari manifold technique. In \cite{CKW}, authors studied the existence of multiple positive solutions of Kirchhoff type problems with convex-concave type polynomial non-linearities having sub-critical growth using fibering map analysis and the Nehari manifold method. \\

\noi At this point, we remark that the study of  polyharmonic Kirchhoff problems involving the exponential type Choquard non-linearity with singular weights was completely open until now. So our article establishes new results for such problems. We point out that the analysis we use here is completely new for this class of problems with critical growth. The salient feature of our problem $(KC)$ is its doubly-non-local structure due to the presence of non-local Kirchhoff as well as Choquard term which makes the equation $(KC)$ no longer a pointwise identity. The doubly non-local nature induces additional mathematical difficulties in the use of classical methods of non-linear analysis. Additionally, we explore the existence and multiplicity results for these kind of problems in the presence of singular weights.\\
The vectorial polyharmonic operator $\Delta^{m}_{\frac{n}{m}}$ is defined by induction as
\begin{equation*}
\Delta^m_{\frac{n}{m}} u=\left\{
\begin{split}
&\nabla.\{\Delta^{j-1}(|\nabla \Delta^{j-1} u|^{\frac{n}{m}-2} \nabla \Delta^{j-1}u)\} \; \ \  \text{if}\ m=2j-1, \\
&\Delta^{j}(|\Delta^j u|^{\frac{n}{m}-2} \Delta^j u) \;\ \ \ \ \ \ \ \ \ \ \ \ \ \ \ \ \ \ \ \ \ \ \text{if}\ m=2j.
\end{split}
\right.
\end{equation*}
In our case, $\frac{n}{m}=2$, the symbol $\nabla^m u $ denotes the $m^{\text{th}}$-order gradient of $u$ and is defined as
\begin{equation*}
\nabla^m u=\left\{
\begin{split}
&\nabla \Delta^{(m-1)/2}u \; \  \text{if}\ m \ \text{is odd},  \\
&\Delta^{m/2} u \;\ \ \ \ \ \ \ \ \text{if}\ m \ \text{is even}
\end{split}
\right.
\end{equation*}
where $\Delta$ and $\nabla$ denotes the usual Laplacian and gradient operator respectively and also $\nabla^m u. \nabla^m v$ denotes the product of two vectors when $m$ is odd and product of two scalars when $m$ is even.

\noi The study of elliptic equations with critical exponential type non-linearity in higher dimensions is motivated by the following Adams-Moser inequality \cite{Ad} and singular Adams-Moser inequality \cite{LL1}. We denote
\begin{equation*}
\zeta_{m,n}=\left\{
\begin{split}
&\frac{n}{\omega_{n-1}}\left(\frac{\pi^{n/2}2^m \Gamma\left(\frac{m+1}{2}\right)}{\Gamma\left(\frac{n-m+1}{2}\right)}\right)^{\frac{n}{n-m}} \; \ \ \text{when}\ m \ \text{is odd},\\
&\frac{n}{\omega_{n-1}}\left(\frac{\pi^{n/2}2^m \Gamma\left(\frac{m}{2}\right)}{\Gamma\left(\frac{n-m}{2}\right)}\right)^{\frac{n}{n-m}} \; \ \ \ \ \  \text{when}\ m \ \text{is even},
\end{split}
\right.
\end{equation*}
where $\omega_{n-1}=$ $(n-1)$-dimensional surface area of $\mb S^{n-1}$. We use the framework of Hilbert space  $W_0^{m, 2}(\Omega)$ (or $H^m_0(\Om)$) equipped with the natural Banach norm $\|u\|\eqdef\left(\int_\Om|\nabla^m u|^2~dx\right)^{\frac{1}{2}}$ associated to the inner product
\[\langle u,v\rangle = \int_\Om \nabla^m u. \nabla^m v~dx.\] Then, we have the following important theorems.
\begin{Theorem}{\textbf{(D. Adams, 1988)}}\label{TM-ineq}
Let $\Omega$ be a smooth bounded domain in $\mathbb{R}^n$ and m is a  positive integer satisfying $m<n$. Then for all $0 \leq \zeta \leq \zeta_{m,n}$ we have
\[\sup_{u \in W_0^{m, \frac{n}{m}}(\Omega), \|u\|\leq 1}\int_\Om \exp(\zeta |u|^{\frac{n}{n-m}})~dx < \infty\]
where $\zeta_{m,n}$ is sharp.
\end{Theorem}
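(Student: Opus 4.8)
The plan is to follow Adams' original strategy of reducing the statement to a sharp inequality for Riesz potentials and then to a one-dimensional exponential estimate. First I would note that, by density of $C_0^\infty(\Om)$, it suffices to prove the bound for smooth $u$ and then pass to the supremum. Using the fundamental solution of the polyharmonic operator $\De^m$, every such $u$ (extended by zero to $\mb R^n$) admits a potential representation $u = I_m * g$, where $g = \De^{m/2}u$ when $m$ is even and $g = \na\De^{(m-1)/2}u$ when $m$ is odd, and $I_m$ denotes the Riesz kernel of order $m$ whose normalizing constant is exactly the factor appearing inside $\zeta_{m,n}$. Crucially $\|g\|_{L^{n/m}(\Om)} = \|u\|$, so the constraint $\|u\|\le 1$ becomes $\|g\|_{n/m}\le 1$. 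Writing $p = n/m$ and $p' = \frac{n}{n-m}$ (note that $p'$ is precisely the exponent in the statement), the theorem is equivalent to the potential inequality
\[ \sup_{\|g\|_{p}\le 1}\int_\Om \exp\!\left(\zeta_{m,n}\,|(I_m*g)(x)|^{p'}\right)dx \le C\,|\Om|, \]
with $C$ independent of $g$; the particular value $\zeta_{m,n}$ is forced by raising the Riesz constant to the power $p'$ and multiplying by $n/\omega_{n-1}$.

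Next I would symmetrize. Since the Riesz kernel is radially non-increasing, replacing $g$ by its symmetric decreasing rearrangement and $\Om$ by a ball of equal measure only increases the left-hand side (Hardy--Littlewood and O'Neil type rearrangement), so it is enough to treat $g\ge 0$ radial and decreasing on a ball. I would then pass to logarithmic radial coordinates, parametrizing the level sets of $x$ and $y$ by variables $t$ and $s$ through their relative measures, which turns the radial convolution $I_m*g$ into a one-dimensional integral operator $F(t)=\int_0^\infty a(s,t)\,\phi(s)\,ds$ with $\int_0^\infty \phi(s)^p\,ds\le 1$. The whole point of this change of variables is that the transformed kernel satisfies the two structural conditions $a(s,t)\le 1$ for $0\le s\le t$ and $b:=\sup_{t>0}\big(\int_t^\infty a(s,t)^{p'}\,ds\big)^{1/p'}<\infty$; and---this is what pins down the constant---the bound $a(s,t)\le 1$ holds with constant exactly $1$ thanks to the precise asymptotics of the Riesz kernel, which is where the sharp value $\zeta_{m,n}$ enters.

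The crux is the one-dimensional exponential lemma: under the two conditions above, there is $c_0=c_0(p,b)$ such that for every $\phi\ge 0$ with $\int_0^\infty\phi^p\le 1$,
\[ \int_0^\infty \exp\!\left(F(t)^{p'}-t\right)dt\le c_0, \qquad F(t)=\int_0^\infty a(s,t)\,\phi(s)\,ds. \]
This generalizes Moser's estimate from the first-order case $m=1$ and is the main obstacle: one splits the $t$-integration according to whether $F(t)^{p'}$ is dominated by or comparable to $t$, and on the critical range applies H\"older's inequality together with the tail bound $b$ and a delicate bookkeeping of the exponential to absorb the gain. Any relaxation of $a\le 1$ to $a\le 1+\e$ would make the integral diverge, which is exactly the mechanism behind sharpness. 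Combining this lemma with the change of variables yields the potential inequality, and hence the asserted finiteness of the supremum for all $\zeta\le\zeta_{m,n}$.

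Finally, to show $\zeta_{m,n}$ is sharp I would exhibit the standard concentrating sequence: truncated, rescaled copies of the fundamental solution (the Adams--Moser functions) $u_k$, normalized so that $\|u_k\|=1$ and $|u_k|$ grows logarithmically near a fixed interior point. A direct computation shows $\int_\Om\exp(\zeta|u_k|^{p'})\,dx\to\infty$ as $k\to\infty$ whenever $\zeta>\zeta_{m,n}$, so the threshold cannot be improved.
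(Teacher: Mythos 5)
The paper does not prove this theorem at all: it is quoted as a known result from Adams' 1988 paper \cite{Ad}, so there is no in-paper argument to compare against. Your proposal is, in substance, a faithful reconstruction of Adams' original proof: the reduction to a sharp exponential inequality for Riesz potentials, the rearrangement step (what is actually used is O'Neil's convolution inequality, which you correctly invoke), the passage to one-dimensional logarithmic variables, and the key lemma with kernel conditions $a(s,t)\le 1$ for $s\le t$ and $b=\sup_{t>0}\bigl(\int_t^\infty a(s,t)^{p'}\,ds\bigr)^{1/p'}<\infty$ --- this is exactly Adams' Lemma 1 (the Adams--Garsia lemma), and you state it with the correct hypotheses and conclusion. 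Three caveats are worth recording. First, that lemma is the analytic heart of the theorem, and your proposal only gestures at its proof; as written, the argument is an accurate roadmap rather than a complete proof. Second, for odd $m$ the representation is not literally $u=I_m*g$: one integrates by parts to write $u$ as the convolution of $\nabla\Delta^{(m-1)/2}u$ with the vector-valued kernel $\nabla I_{m+1}$, whose modulus is a constant multiple of $|x|^{m-n}$; this distinction is precisely why $\zeta_{m,n}$ has two different expressions for odd and even $m$ in the statement. Third, the sharpness construction cannot use naively truncated logarithms, since truncation destroys membership in $W_0^{m,\frac{n}{m}}(\Omega)$ for $m\ge 2$; Adams must smooth the Moser-type functions, and the capacity-based functions $U_l$ of \cite{LO} that this paper itself borrows in Section 2.2 to build the functions $A_k$ are exactly the higher-order substitute your last step would need.
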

\begin{Theorem}{\textbf{(Adams-Moser)}}\label{TM-ineq1}
For $0< \alpha <n$, $\Omega$ be a smooth bounded domain in $\mathbb{R}^n$ and m is a positive integer satisfying $m<n$ we have
\begin{equation}\label{AM}
\sup_{u \in W_0^{m, \frac{n}{m}}(\Omega), \|u\|\leq 1}\int_\Om \frac{\exp(\kappa |u|^{\frac{n}{n-m}})}{|x|^\alpha}~dx < \infty
\end{equation}
for all $0 \leq \kappa \leq \kappa_{m,n} = \left(1-\frac{\alpha}{n}\right)\zeta_{m,n}$, where $\kappa_{m,n}$ is sharp. 
\end{Theorem}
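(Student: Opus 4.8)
The plan is to deduce this singular inequality from the non-singular Adams inequality of Theorem~\ref{TM-ineq} by a Hölder interpolation that decouples the exponential nonlinearity from the singular weight, and then to handle the critical constant by a sharper representation argument. First I would fix $u \in W_0^{m,\frac{n}{m}}(\Om)$ with $\|u\| \le 1$ and, for a parameter $p>1$ with conjugate exponent $p' = \frac{p}{p-1}$, apply Hölder's inequality to write
\[
\int_\Om \frac{\exp\big(\kappa |u|^{\frac{n}{n-m}}\big)}{|x|^\al}\,dx
\le \left(\int_\Om \exp\big(p\kappa |u|^{\frac{n}{n-m}}\big)\,dx\right)^{1/p}
\left(\int_\Om |x|^{-\al p'}\,dx\right)^{1/p'}.
\]
The second factor is finite precisely when $\al p' < n$, i.e. $p > \frac{n}{n-\al}$, and (since $\Om$ is bounded) is then controlled by a constant depending only on $\Om,\al,p$. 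The first factor is bounded uniformly over $\|u\|\le 1$ by Theorem~\ref{TM-ineq} as soon as $p\kappa \le \zeta_{m,n}$, that is $p \le \zeta_{m,n}/\kappa$.

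For the subcritical range $\kappa < \kappa_{m,n} = \left(1-\frac{\al}{n}\right)\zeta_{m,n}$ these two requirements are compatible: since $\zeta_{m,n}/\kappa > \frac{1}{1-\al/n} = \frac{n}{n-\al}$, the admissible set $p \in \left(\frac{n}{n-\al},\, \zeta_{m,n}/\kappa\right]$ is nonempty, and any such $p$ yields finiteness of the supremum. The delicate point is the critical endpoint $\kappa = \kappa_{m,n}$: there the only value of $p$ satisfying $p\kappa \le \zeta_{m,n}$ would be $p = \frac{n}{n-\al}$, which forces $p' = \frac{n}{\al}$ and hence $\al p' = n$, making $\int_\Om |x|^{-\al p'}\,dx$ diverge. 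Thus the Hölder splitting alone just misses the sharp constant, and some genuinely sharp estimate is needed at the endpoint.

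To capture the endpoint I would use Adams' representation of $u$ as a Riesz potential of an $L^{\frac{n}{m}}$ function: writing $\nabla^m u = g$ with $\|g\|_{n/m} = \|u\| \le 1$ and inverting the polyharmonic operator gives $u = I_m * \tilde g$ up to lower-order kernels, the even and odd cases of $m$ being treated by their respective representation formulas for $\Delta^{m/2}u$ and $\nabla\Delta^{(m-1)/2}u$. The problem then reduces to a sharp weighted convolution estimate — the singular analogue of Adams' lemma — which, after rearranging the level sets of $u$ with respect to the measure $|x|^{-\al}\,dx$, collapses to a one-dimensional exponential integral; the factor $\left(1-\frac{\al}{n}\right)$ in $\kappa_{m,n}$ emerges precisely as the scaling of the weighted measure of small balls about the origin. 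This sharp weighted estimate is the main obstacle, and it is exactly the content established in \cite{LL1}, which I would invoke for the endpoint. Finally, sharpness of $\kappa_{m,n}$ follows by testing the functional on the Moser--Adams sequence of functions concentrating at the origin: a direct computation shows that for any $\kappa > \kappa_{m,n}$ the weight $|x|^{-\al}$ amplifies the concentration enough to drive the integral to $+\infty$.
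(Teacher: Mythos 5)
Your proposal cannot be checked against an internal argument, because the paper itself gives none: Theorem \ref{TM-ineq1} is stated as a quoted result, attributed to Lam--Lu \cite{LL1}, and is used as a black box throughout. Relative to that, what you have done is consistent and in fact adds content. Your H\"older splitting is correct: the weight factor $\int_\Om |x|^{-\al p'}dx$ is finite exactly when $p>\frac{n}{n-\al}$, the exponential factor is uniformly bounded by Theorem \ref{TM-ineq} exactly when $p\kappa\le\zeta_{m,n}$, and these two windows overlap precisely when $\kappa<\left(1-\frac{\al}{n}\right)\zeta_{m,n}=\kappa_{m,n}$. So the strictly subcritical range of the singular inequality is an elementary corollary of the non-singular Adams inequality, which is a useful observation (and is all that the paper's applications in Section 2 actually require, since there one always works with $\frac{2n}{2n-(2\al+\mu)}\|u\|^2$ strictly below $\zeta_{m,2m}$). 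You also correctly diagnose why this interpolation degenerates at the endpoint: $p=\frac{n}{n-\al}$ forces $\al p'=n$ and the weight integral diverges. For the closed endpoint $\kappa=\kappa_{m,n}$ and for sharpness — the actual content of the theorem as stated — you defer to the weighted rearrangement/Riesz-potential machinery of \cite{LL1}, which is exactly what the paper does; your sketch of that argument (representation $u=I_m*\tilde g$, rearrangement with respect to the measure $|x|^{-\al}dx$, Adams-type one-dimensional reduction, and concentration of Adams functions at the origin for sharpness) is a plausible outline of the Lam--Lu proof but is not carried out, so as a self-contained proof the endpoint remains a citation. Since the paper's own treatment is also a citation, there is no gap relative to the paper.
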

We recall that the embedding 
$$W_0^{m, \frac{n}{m}}(\Omega) \ni u \mapsto \frac{e^{|u|^\beta}}{|x|^\alpha} \in L^1(\Omega)$$ is compact for all $\beta \in[1, \frac{n}{n-m})$ and continuous for $\beta= \frac{n}{n-m}.$ In our case $\frac{n}{n-m}=2$.\\
Now we state the doubly-weighted Hardy-Littlewood-Sobolev inequality proved in \cite{SW}.
\begin{Proposition}\label{HLS}
(\textbf {Doubly weighted Hardy-Littlewood-Sobolev inequality}) Let $t,r>1$ and $0<\mu<n $ with $ \alpha+ \beta \geq 0$, $\frac{1}{t}+\frac{\mu+\alpha+\beta}{n}+\frac{1}{r}=2$, $\alpha < \frac{n}{t^\prime}$, $\beta < \frac{n}{r^{\prime}}$ $f \in L^t(\mathbb R^n)$ and $h \in L^r(\mathbb R^n)$, where $t^\prime$ and $r^\prime$ denotes the H\"{o}lder conjugate of $t$ and $r$ respectively. Then there exists a constant
$C(\alpha, \beta, t,n,\mu,r)>0$ which is independent of $f,h$ such that
 \begin{equation}\label{HLSineq}
 \int_{\mb R^n}\int_{\mb R^n} \frac{f(x)h(y)}{|x-y|^{\mu} |y|^\alpha |x|^{\beta}}~dxdy \leq { C(\alpha, \beta, t, n, \mu, r)}\|f\|_{L^t(\mb R^n)}\|h\|_{L^r(\mb R^n)}.
 \end{equation}
 \end{Proposition}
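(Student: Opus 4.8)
The plan is to derive the bilinear inequality \eqref{HLSineq} by duality, reducing it to a mapping property of a single integral operator, and then to establish that mapping property through a scale-invariant weak-type estimate followed by interpolation. First I would dualize: for fixed $f \in L^t(\mb R^n)$, the left-hand side of \eqref{HLSineq} equals $\int_{\mb R^n} (Tf)(y)\,h(y)\,dy$, where
\[
(Tf)(y) = \frac{1}{|y|^\al}\int_{\mb R^n}\frac{f(x)}{|x-y|^\mu\,|x|^\ba}\,dx .
\]
Thus \eqref{HLSineq} is equivalent to the boundedness $T\colon L^t(\mb R^n)\to L^{r'}(\mb R^n)$, with $r'$ the H\"older conjugate of $r$. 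The next observation is scaling consistency: the kernel $K(x,y)=|x|^{-\ba}|x-y|^{-\mu}|y|^{-\al}$ is homogeneous of degree $-(\mu+\al+\ba)$, and the balance $\frac1t+\frac{\mu+\al+\ba}{n}+\frac1r=2$ is precisely the condition under which both sides of \eqref{HLSineq} transform identically under the dilations $f\mapsto f(\cdot/\la)$, $h\mapsto h(\cdot/\la)$. This homogeneity is what allows an endpoint estimate to propagate to the full open range of admissible exponents.

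Then I would prove a restricted weak-type bound by testing the bilinear form on indicators $f=\chi_A$, $h=\chi_B$ and splitting $\mb R^n\times\mb R^n$ into three regions according to which of $|x|$, $|y|$, $|x-y|$ dominates: the region $|y|\le\frac12|x|$ (where $|x-y|\approx|x|$), the region $|y|\ge 2|x|$ (where $|x-y|\approx|y|$), and the diagonal region $\frac12|x|<|y|<2|x|$ (where $|x|\approx|y|$). In the first two regions one factor of $K$ is comparable to another, collapsing $K$ to a one-weight Riesz-type kernel whose integrability is controlled by the strict conditions $\al<\frac{n}{t'}$ and $\ba<\frac{n}{r'}$; in the diagonal region the two power weights merge into a single power $|x|^{-(\al+\ba)}$ and the estimate follows from the unweighted Hardy--Littlewood--Sobolev inequality together with $\al+\ba\ge 0$. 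Summing the three contributions yields $B(\chi_A,\chi_B)\le C\,|A|^{1/t}|B|^{1/r}$ at the relevant endpoint exponents.

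Finally, I would invoke bilinear Marcinkiewicz (Stein--Weiss) interpolation to upgrade these endpoint weak-type estimates to the strong-type bound \eqref{HLSineq} on the open set of exponents satisfying the hypotheses, recovering the claimed constant $C(\al,\ba,t,n,\mu,r)$. I expect the main obstacle to be this third stage combined with the convergence analysis of stage two: the strict inequalities $\al<n/t'$ and $\ba<n/r'$ are exactly what guarantee local integrability of the singular weights against the Lebesgue exponents in the dominant regions, while $\al+\ba\ge 0$ is what tames both the diagonal behavior and the decay at infinity, and choosing admissible endpoints that respect all three constraints simultaneously is the delicate bookkeeping. Alternatively, one may bypass interpolation entirely by carrying out the three-region decomposition directly on $f$ and $h$ and applying generalized H\"older together with the unweighted inequality region by region, at the cost of a longer computation.
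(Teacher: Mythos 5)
The paper itself offers no argument for Proposition \ref{HLS}: it is quoted from Stein and Weiss \cite{SW}, so your attempt can only be measured against the classical proof, whose overall architecture (duality, decomposition of $\mb R^n\times \mb R^n$ by the relative sizes of $|x|$ and $|y|$, restricted weak-type bounds plus interpolation along the scaling line) is indeed the one you describe. The genuine gap is in your treatment of the diagonal region $\frac12|x|<|y|<2|x|$. There you use $\alpha+\beta\ge 0$ and $|x-y|\le 3|x|$ to replace $|x|^{-(\alpha+\beta)}$ by $|x-y|^{-(\alpha+\beta)}$, and then invoke the unweighted Hardy--Littlewood--Sobolev inequality for the merged kernel $|x-y|^{-(\mu+\alpha+\beta)}$. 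Unweighted HLS requires the kernel exponent to be strictly less than $n$, while the hypotheses only force $\mu+\alpha+\beta=n\bigl(\frac{1}{t'}+\frac{1}{r'}\bigr)\in(0,2n)$. The statement genuinely covers the regime $\mu+\alpha+\beta\ge n$: for instance $t=r=3$, $\alpha=\beta=\frac{n}{4}$, $\mu=\frac{5n}{6}$ satisfies every hypothesis, yet $\mu+\alpha+\beta=\frac{4n}{3}>n$. In that regime $|x-y|^{-(\mu+\alpha+\beta)}$ is not locally integrable, so after your pointwise majorization even the restricted estimate on indicators fails: for two overlapping balls $A=B$ inside an annulus $\{|x|\approx 1\}$ one has $\iint_{A\times B}|x-y|^{-(\mu+\alpha+\beta)}\,dx\,dy=+\infty$. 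Thus the second stage of your plan collapses exactly where the doubly weighted inequality says something beyond unweighted HLS.

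The repair is standard but essential: on the diagonal region do not absorb the weight into the kernel; split into dyadic shells $\{|x|\approx|y|\approx 2^k\}$, on each of which the weight is the constant $\approx 2^{-k(\alpha+\beta)}$, apply unweighted HLS with the admissible exponent $\mu<n$ to $f\chi_{\{|x|\approx 2^k\}}$ and $h\chi_{\{|y|\approx 2^k\}}$, and sum the resulting series; the hypothesis $\alpha+\beta\ge 0$ is what makes the sum controllable (the case $\alpha+\beta=0$ is immediate, since then the weight is $\approx 1$ on the diagonal). A second, smaller inaccuracy: in the two off-diagonal regions the H\"older computation needs $\int_{|y|\le|x|/2}|h(y)||y|^{-\alpha}\,dy<\infty$, i.e. $\alpha<n/r'$, and symmetrically $\beta<n/t'$ --- each weight tested against the conjugate exponent of its \emph{own} function --- not the crossed pair $\alpha<n/t'$, $\beta<n/r'$ that you list. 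That crossed pair is what the paper's statement says, but it is a typo relative to \cite{SW}: under it the inequality can even fail, e.g. $t=\frac43$, $r=4$, $\alpha=0$, $\beta=\frac n2$, $\mu=\frac n2$ meets every stated hypothesis, yet taking $f=\delta^{-n/t}\chi_{B_\delta(0)}$ and $h$ the indicator of the annulus $\{1<|y|<2\}$ makes the left side of \eqref{HLSineq} blow up like $\delta^{n/t'-\beta}=\delta^{-n/4}$ as $\delta\to 0$. This is harmless for the paper, which only applies the proposition with $t=r$ and $\alpha=\beta$, but a careful write-up of your regions would have surfaced it. With the dyadic diagonal argument and the conditions read in the uncrossed form, your final stage (restricted weak type at two points of the admissible segment, then bilinear Marcinkiewicz interpolation to strong type in the interior) does go through.
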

Throughout the next sections, we assume the following conditions on $M$ and $f$. The function $ M : \mb R^+ \to \mb R^+$ is a continuous function satisfying the following conditions:
\begin{enumerate}
\item[(m1)] There exists $M_0>0$ such that $M(t)\geq M_0$ and
$\mathcal{M}(t+s)\geq \mathcal{M}(t)+\mathcal{M}(s), \; \text{for all}\; t,s\geq 0$
where $\mathcal{M}(t)= \int_0^t M(s) ~ds$ is the primitive of the function $M$ vanishing at $0$.
\item[(m2)] There exist constants $b_1,b_2>0$ and $\hat t>0$ such that for some $k\in \mb R$
\[M(t)\leq b_1+b_2t^k,\;\text{for all}\; t \geq \hat t.\]
\item[(m3)] The function $\frac{M(t)}{t}$ is non-increasing for $t>0$.
\end{enumerate}
Using (m3), one can easily deduce that the function
 \[(m3)^\prime \quad \quad \quad \frac{1}{2}\mathcal{M}(t)-\frac{1}{\theta}M(t)t \;\text{is non-negative and non-decreasing for}\; t\geq 0 \;\text{and}\; \theta \geq 4.\]
\begin{Example}
An example of a function satisfying (m1), (m2) and (m3) is $M(t)= M_0+ bt^\beta$ where $M_0,>0$, $\beta<1$ and $b\geq0$. Also $M(t)= M_0+\log(1+t)$ with $M_0\geq 1$ verifies (m1)-(m3).
\end{Example}
The function $f:\Om \times \mb R\to \mb R$ which governs the Choquard term is given by $f(x,t)=h(x,t)\exp(t^{2}),$ where $h \in C( \Om \times \mb R)$ satisfies the following growth conditions:
\begin{itemize}
\item[(h1)] $h(x,t)=0$ for all $t \leq 0$ and $h(x,t)>0$ for $t>0$.
 \item[(h2)]  For any $\e>0$, $\lim\limits_{t \to \infty}\sup_{x \in \bar \Om}h(x,t)\exp(-\e t^{2})=0$ and $\lim\limits_{t \to \infty}\inf_{x \in \bar \Om}h(x,t)\exp(\e t^{2})=\infty$.
    \item[(h3)] There exists $\ell > \max\{1, k+1\}$
     such that $\frac{f(x,t)}{t^{\ell}}$ is increasing for each $t>0$ uniformly in $x \in \Om$, where $k$ is specified in (m2).
        \item[(h4)] There exist $T, T_0>0$ and $\gamma_0 >0 $ such that $0<t^{\gamma_0}F(x,t)\leq T_0 f(x,t)$ for all $|t|\geq T$ and uniformly in $x \in \Omega$.
\end{itemize}
The condition (h3) implies that { $\frac{f(x,t)}{t}$} is increasing in $t>0$ and $\displaystyle\lim_{t\to 0^+}\frac{f(x,t)}{t}=0$ uniformly in $x \in \Om.$
\begin{Example}
A typical example of $f$ satisfying $(h1)-(h4)$ is $f(x,t)= {t^{\beta+1}\exp(t^p)\exp(t^2)}$ for $t \geq 0$ and $f(x,t)=0$ for $t<0$  where $0\leq p< 2$ and $\beta>l-1$.
\end{Example}
\noindent Furthermore, using ${(h1)-(h3)}$ we obtain that for any $\e>0$, $ r > \beta_0+1$ where { $0 \leq \beta_0 <\ell$ }, there exist constants {$C_1, C_2 >0$ (depending upon $\epsilon, n, m $)} such that for each $x \in \Om$
\begin{equation}\label{kc-1}
0 \leq F(x,t) \leq C_1 |t|^{\beta_0+1}+ C_2 |t|^r\exp((1+\e)t^{2}), \; \text{for all}\; t \in\mb R.
\end{equation}
For any $u \in W_0^{m,2}(\Om)$, by virtue of Sobolev embedding we get that $u \in L^q(\Om)$ for all $q \in [1,\infty)$. This also implies that $F(x,u) \in L^{q}(\Om)\mbox{ for any }q\geq 1$.
The problem $(KC)$ has a variational structure and  the energy functional $\mathcal{J} : W_0^{m,2}(\Om) \to \mb R$ associated to $(KC)$ is given by
\begin{equation}\label{functional}
\mathcal{J}(u)= \frac{1}{2}\mathcal{M}(\|u\|^{2}) - \frac12 \int_\Om \left(\int_{\Omega} \frac{F(y,u)}{ |y|^\alpha|x-y|^\mu}dy\right)\frac{F(x,u)}{|x|^\alpha}~dx.
\end{equation}
 \noindent The notion of weak solution for $(KC)$ is given as follows.
\begin{Definition}\label{def}
A weak solution of $(KC)$ is a function $u \in W^{m,2}_0(\Om)$ such that for all $\varphi \in W^{m,2}_0(\Om)$, it satisfies
\begin{equation}\label{weakfunctional}
M(\|u\|^{2}) \int_\Om \nabla^m u. \nabla^m \varphi ~dx = \int_\Om \left(\int_{\Om}\frac{F(y,u)}{{ |y|^\alpha} |x-y|^{\mu}}dy\right)\frac{f(x,u)}{|x|^\alpha}\varphi ~dx.
\end{equation}
\end{Definition}

\noi In section $2$, we establish the following main result concerning the problem $(KC)$.

\begin{Theorem}\label{kc-mt-1}
Let (m1)-(m3) and (h1)-(h4) holds. Assume in addition
{\begin{equation}\label{h-growth}
\displaystyle \lim_{s\to +\infty} \frac{sf(x,s)F(x,s)}{\exp\left(2 s^{2}\right)} = \infty,\mbox{ uniformly in }x \in \overline{\Om}.
\end{equation}}
 Then the problem $(KC)$ admits a non-trivial weak solution.
\end{Theorem}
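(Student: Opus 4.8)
The plan is to produce a nontrivial critical point of the energy functional $\mc J$ defined in \eqref{functional} via the Mountain Pass Lemma. As a preliminary step I would verify that $\mc J\in C^1(W_0^{m,2}(\Om),\R)$ and that its critical points are exactly the weak solutions of $(KC)$ in the sense of Definition \ref{def}. The delicate point is the Choquard double integral: using the growth bound \eqref{kc-1} together with the doubly weighted Hardy--Littlewood--Sobolev inequality \eqref{HLSineq} (applied with both weights equal to $\al$, with $t=r$ and $f=h=F(\cdot,u)$, so that the nonlocal term is dominated by $C\,\|F(\cdot,u)\|_{L^t(\Om)}^2$ for the admissible exponent $t=\frac{2n}{2n-\mu-2\al}$), and then invoking the singular Adams--Moser inequality \eqref{AM} (which is available since $\frac{n}{n-m}=2$), one obtains finiteness and continuity of these quantities. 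Differentiability and the continuity of $\mc J'$ then follow by dominated convergence once uniform exponential integrability is in place.

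Next I would check the two geometric conditions. Near the origin, the bound $\mc M(\|u\|^2)\ge M_0\|u\|^2$ from (m1) controls the Kirchhoff part from below, while \eqref{kc-1}, \eqref{HLSineq} and \eqref{AM} show that the Choquard term is $o(\|u\|^2)$ as $\|u\|\to0$; hence there exist $\rho,\ba>0$ with $\mc J(u)\ge\ba$ whenever $\|u\|=\rho$. For the far field, fix $u_0\in W_0^{m,2}(\Om)$ with $u_0>0$: by (m2) the quantity $\mc M(t^2\|u_0\|^2)$ grows at most polynomially (of order $t^{2(k+1)}$), whereas the super-exponential growth of $f$ encoded in (h2)--(h3), amplified by the quadratic dependence of the Choquard energy on $F$, makes the nonlocal term grow faster than any power of $t$; thus $\mc J(tu_0)\to-\infty$ and one selects $e:=t_*u_0$ with $\|e\|>\rho$ and $\mc J(e)<0$. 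This defines the minimax value $c=\inf_{\ga\in\Ga}\max_{s\in[0,1]}\mc J(\ga(s))\ge\ba>0$.

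The crucial analytic step is to show that $c$ lies strictly below the compactness threshold dictated by \eqref{AM}. Here I would insert a suitably normalized family of concentrating Adams--Moser test functions (the higher-order analogues of the Moser functions adapted to $W_0^{m,2}(\Om)$) and estimate $\max_{t\ge0}\mc J(tu_\e)$. The hypothesis \eqref{h-growth}, namely $sf(x,s)F(x,s)/\exp(2s^2)\to\infty$ uniformly in $x$, is exactly the quantitative condition that makes the Choquard contribution along these test functions large enough to push the maximum below the critical level; without it the estimate fails. This comparison, together with the Palais--Smale analysis, is the most technical part of the argument.

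Finally I would verify the Palais--Smale condition below that threshold. Boundedness of a $(PS)_c$ sequence $(u_n)$ follows by testing with $u_n$ and combining $\mc J(u_n)\to c$ and $\mc J'(u_n)\to0$ with $(m3)^\prime$ (valid for $\theta\ge4$) and the Ambrosetti--Rabinowitz type control $(\ell+1)F(x,t)\le tf(x,t)$ derived from (h3)--(h4), so that the super-$\ell$ homogeneity dominates the polynomially bounded Kirchhoff term (using $4\le\theta\le2(\ell+1)$). Passing to a subsequence, $u_n\rightharpoonup u$ in $W_0^{m,2}(\Om)$, $u_n\to u$ in every $L^q(\Om)$ and a.e. The level bound from the previous step keeps $\limsup\|u_n\|^2$ strictly below the value at which \eqref{AM} degenerates, which by a Lions-type concentration argument yields uniform integrability of $\exp(\ba u_n^2)/|x|^\al$ for some $\ba$ slightly larger than $2$; this lets one pass to the limit in the convolution term and upgrade weak to strong convergence, forcing $\|u_n\|\to\|u\|$ and hence $M(\|u_n\|^2)\to M(\|u\|^2)$ by continuity. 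I expect the main obstacle to be precisely this compactness at the critical exponent $\frac{n}{n-m}=2$, where the embedding into the exponential Orlicz class is only continuous: one must simultaneously control the concentration of the exponential nonlinearity and the norm convergence demanded by the nonlocal Kirchhoff coefficient, with the level estimate serving as the safeguard. Nontriviality of $u$ then follows since $\mc J(u)=c>0$ excludes $u\equiv0$, which completes the proof.
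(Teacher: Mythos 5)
Your first three steps (regularity of $\mc J$, mountain pass geometry via (m1), \eqref{kc-1}, \eqref{HLSineq}, \eqref{AM}, boundedness of $(PS)$ sequences via $(m3)'$ and the Ambrosetti--Rabinowitz inequality from (h3), and the level estimate below $\frac12\mc M\bigl(\frac{2n-(2\al+\mu)}{2n}\zeta_{m,2m}\bigr)$ using concentrating Adams functions and \eqref{h-growth}) match the paper's Lemmas 2.1, 2.2 and Theorem 2.3. The gap is in your final compactness step. You assert that ``the level bound from the previous step keeps $\limsup\|u_n\|^2$ strictly below the value at which \eqref{AM} degenerates.'' This is false for a general $(PS)_c$ sequence: since $\mc M(\|u_n\|^2)=2\mc J(u_n)+\int_\Om\bigl(\int_\Om \frac{F(y,u_n)}{|y|^\al|x-y|^\mu}dy\bigr)\frac{F(x,u_n)}{|x|^\al}dx$ and the Choquard energy is nonnegative and need not vanish, the bound $c<\frac12\mc M\bigl(\frac{2n-(2\al+\mu)}{2n}\zeta_{m,2m}\bigr)$ gives no upper bound on $\|u_n\|^2$ at all. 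Consequently the uniform exponential integrability you invoke, and with it strong convergence, the identity $\mc J(u)=c$, and your nontriviality argument (which presupposes $\mc J(u)=c$), all rest on an unproved claim.

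The paper repairs exactly this point with two ingredients you omit. First, nontriviality of the weak limit $u_0$ is proved \emph{before} any strong convergence, by contradiction: if $u_0\equiv0$, then by Lemma 2.5 the Choquard energy tends to $0$, and \emph{only in that case} does the level bound force $\frac{2n}{2n-(2\al+\mu)}\|u_k\|^2<\zeta_{m,2m}$, whence the Choquard term tested against $u_k$ also vanishes, $\|u_k\|\to0$ and $l^*=0$, contradicting $l^*>0$. Second, to handle the case $u_0\not\equiv0$, the paper proves the Nehari-type inequality $M(\|u_0\|^2)\|u_0\|^2\ge\int_\Om\bigl(\int_\Om\frac{F(y,u_0)}{|y|^\al|x-y|^\mu}dy\bigr)\frac{f(x,u_0)u_0}{|x|^\al}dx$ using the comparison $l^*\le l^{**}=\inf_{\mc N}\mc J$ (Lemma 2.6), which yields $\mc J(u_0)\ge0$; combining this with $\mc M(\rho^2)=2l^*-2\mc J(u_0)+\mc M(\|u_0\|^2)$, the level estimate, and the superadditivity of $\mc M$ in (m1), one gets $\rho^2\bigl(1-\|v_0\|^2\bigr)<\frac{2n-(2\al+\mu)}{2n}\zeta_{m,2m}$ where $v_0=u_0/\rho\ne0$. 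Only then does Lions' lemma (Lemma 2.7), whose improvement factor $\frac{1}{1-\|v_0\|^2}$ is useless when $v_0=0$, deliver the uniform integrability \eqref{kc-mt-15} needed to pass to the limit and conclude $\rho=\|u_0\|$. So your outline is structurally incomplete: the case analysis on the weak limit and the Nehari manifold comparison are not technical conveniences but the mechanism by which the level estimate is converted into norm control, and without them the argument does not close.
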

In section $3$, we consider the problem $(\mathcal{P}_{\la, \mathcal{M}})$. The energy functional $\mathcal{J}_{\la, M} : W_0^{m,2}(\Omega) \to \mathbb{R}$ associated to the problem $(\mathcal{P}_{\la, \mathcal{M}})$ is defined as
\begin{align*}
\mathcal{J}_{\la, M}(u)= \frac{1}{2} \mathcal{M}(\|u\|^2) - \dfrac{\la}{q+1} \int_{\Omega} h(x) |u|^{q+1}~dx - \dfrac{1}{2} \int_{\Omega} \left( \int_{\Omega} \frac{F(u)}{|x-y|^{\mu} |y|^\alpha}~dy\right) \frac{F(u)}{|x|^\alpha}~dx
\end{align*}
where $F$ and $\mathcal{M}$ are primitive of $f$ and $M$ respectively vanishing at $0$ and $f(s)= s|s|^p \exp(|s|^{\gamma}).$
\begin{Definition}
A function $u \in W_0^{m,2}(\Omega)$ is said to be a weak solution of \ $(\mathcal{P}_{\la, \mathcal{M}})$ if for all $\phi \in W_0^{m,2}(\Omega)$, it satisfies
\begin{align*}
M(\|u\|^2)\int_{\Omega} \nabla^m u. \nabla^m \phi ~dx= \la \int_{\Omega} h(x) |u|^{q-1} u \phi ~dx + \int_{\Omega} \left( \int_{\Omega} \frac{F(u)}{|x-y|^\mu |y|^\alpha}~dy \right) \frac{f(u)}{|x|^\alpha} \phi ~dx.
\end{align*}
\end{Definition}
We prove the following theorem concerning $(P_{\la,\mc M})$.
\begin{Theorem}\label{first}
There exists a $\la_0>0$ such that for $\gamma \in \left(1, 2\right)$ and $\la \in (0, \la_0)$, $(\mathcal{P}_{\la, \mathcal{M}})$ admits atleast two solutions.
\end{Theorem}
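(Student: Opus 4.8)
The plan is to run the Nehari manifold method coupled with a fibering map analysis adapted to the doubly non-local (Kirchhoff--Choquard) structure. Define the Nehari manifold
$$\mathcal{N}_\la = \{u \in W_0^{m,2}(\Om)\setminus\{0\} : \langle \mathcal{J}_{\la,M}'(u), u\rangle = 0\},$$
and for each $u\neq 0$ the fibering map $\phi_u(t)=\mathcal{J}_{\la,M}(tu)$, $t>0$. Since $M(t)=at+b$ gives $\mathcal{M}(\|tu\|^2)=\frac{a}{2}t^4\|u\|^4+bt^2\|u\|^2$, a direct computation yields
$$\phi_u'(t) = (at^2\|u\|^2+b)\,t\|u\|^2 - \la t^q\!\int_\Om h|u|^{q+1}\,dx - \int_\Om\Big(\int_\Om \tfrac{F(tu)}{|x-y|^\mu|y|^\alpha}\,dy\Big)\tfrac{f(tu)u}{|x|^\alpha}\,dx.$$
As usual $tu\in\mathcal{N}_\la$ iff $\phi_u'(t)=0$, and I would split $\mathcal{N}_\la=\mathcal{N}_\la^+\cup\mathcal{N}_\la^0\cup\mathcal{N}_\la^-$ according to whether $\phi_u''(1)$ is positive, zero, or negative.

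First I would carry out the fibering analysis on the cone of $u$ with $\int_\Om h|u|^{q+1}\,dx>0$ (nonempty since $h^+\not\equiv 0$). For such $u$ the concave term ($0<q<1$) forces $\phi_u(t)<0$ for small $t$, while the exponential Choquard term ($p>2$, $\gamma>1$) is superlinear and drives $\phi_u(t)\to-\infty$, the positive Kirchhoff terms dominating on an intermediate range. Writing $\phi_u'(t)=0$ as $g_u(t):=a t^{3-q}\|u\|^4+bt^{1-q}\|u\|^2-t^{-q}\int_\Om(\cdots)f(tu)u\,dx=\la\int_\Om h|u|^{q+1}\,dx$, I would show $g_u(0^+)=0$, $g_u(\infty)=-\infty$ and that $g_u$ has a single interior maximum, so that for small $\la$ the horizontal line meets the graph in exactly two points $t^+(u)<t^-(u)$, producing $t^+(u)u\in\mathcal{N}_\la^+$ (a local minimum of $\phi_u$) and $t^-(u)u\in\mathcal{N}_\la^-$ (a local maximum). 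The threshold is quantified via $\int_\Om h|u|^{q+1}\le\|h\|_{L^r}\|u\|_{L^{p+2}}^{q+1}\le C\|u\|^{q+1}$ together with a lower bound $\max_t g_u\ge c\|u\|^{\sigma}$ after normalising $\|u\|=1$; fixing $\la_0$ with $\la_0 C<\min_{\|u\|=1}\max_t g_u$ guarantees two intersections and in particular $\mathcal{N}_\la^0=\emptyset$ for $\la\in(0,\la_0)$. I would also record that $\mathcal{J}_{\la,M}$ is coercive and bounded below on $\mathcal{N}_\la$ (the quartic/quadratic Kirchhoff terms absorb the concave term once the Choquard term is eliminated through the Nehari constraint) and that $\inf_{\mathcal{N}_\la^+}\mathcal{J}_{\la,M}<0$.

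Next, set $c^\pm=\inf_{\mathcal{N}_\la^\pm}\mathcal{J}_{\la,M}$. Applying Ekeland's variational principle on $\overline{\mathcal{N}_\la^\pm}$ (complete because $\mathcal{N}_\la^0=\emptyset$) gives minimizing sequences $\{u_k^\pm\}$ that are Palais--Smale sequences for $\mathcal{J}_{\la,M}$; coercivity yields boundedness, hence $u_k^\pm\rightharpoonup u^\pm$ up to a subsequence. The key is passing to the limit in the non-local terms: since $\gamma\in(1,2)=(1,\frac{n}{n-m})$ is \emph{subcritical}, the embedding $u\mapsto e^{|u|^\gamma}/|x|^\alpha\in L^1(\Om)$ recalled after Theorem~\ref{TM-ineq1} is compact, and combined with the doubly weighted Hardy--Littlewood--Sobolev inequality (Proposition~\ref{HLS}) and the growth of $f$ this makes the Choquard functional and its derivative sequentially weakly continuous, while the concave term is weakly continuous by Rellich compactness. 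Thus $u^\pm$ solves the equation weakly. I would then verify $u^\pm\neq 0$ and $u^\pm\in\mathcal{N}_\la^\pm$: the strict bound $c^+<0$ excludes $u^+=0$, and for $\mathcal{N}_\la^-$ the fibering structure (the projection $t^-(\cdot)$ and $\mathcal{N}_\la^0=\emptyset$) prevents the limit from escaping to $0$ or slipping onto $\mathcal{N}_\la^0$, forcing strong convergence and attainment of $c^\pm$. As $\mathcal{N}_\la^+\cap\mathcal{N}_\la^-=\emptyset$, we get $u^+\neq u^-$, i.e. two distinct solutions for each $\la\in(0,\la_0)$.

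The principal obstacle is the compactness and attainment on $\mathcal{N}_\la^-$: even with the compact exponential embedding furnished by subcriticality of $\gamma$, one must rule out the weak limit collapsing to $0$ or landing on $\mathcal{N}_\la^0$, and the Kirchhoff coefficient $M(\|u_k\|^2)$ together with the non-local Choquard convolution must be reconciled in the limit — a Brezis--Lieb decomposition for the weighted double integral, controlled by Proposition~\ref{HLS}, is the natural device. The other technical heart is confirming that $g_u$ has a \emph{single} interior maximum despite the non-homogeneous factor $\exp(|u|^\gamma)$, so that the three-way Nehari splitting is well defined and $\mathcal{N}_\la^0=\emptyset$ holds uniformly for $\la<\la_0$.
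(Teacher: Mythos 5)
Your skeleton is the same as the paper's (Nehari decomposition $N^{\pm}_{\la,M}$, $N^0_{\la,M}$, fibering maps, smallness of $\la$, Ekeland, compactness from the subcriticality $\gamma<2$), but there is a genuine gap exactly at the step you flag as "the other technical heart," and it is not a removable technicality: your choice of $\la_0$ via $\la_0 C<\min_{\|u\|=1}\max_t g_u$ only guarantees that the level $\la H(u)$ lies strictly below the supremum of $g_u$, which gives \emph{at least} two crossings; it does not exclude tangential intersections (which are precisely points of $N^0_{\la,M}$) nor a third transversal crossing, unless $g_u$ has a single interior maximum. That single-maximum claim is doubtful for $f(t)=t|t|^{p}\exp(|t|^{\gamma})$ and you leave it unproven, yet your whole construction of $t^+(u)<t^-(u)$, the emptiness of $N^0_{\la,M}$, and hence the Lagrange-multiplier/minimization machinery rest on it. The paper never proves (nor needs) uniqueness of the interior maximum. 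Instead, Lemma \ref{inf} is the load-bearing device: applying an AM--GM inequality to the equation $\psi_u'(t_*)=0$ shows that \emph{every} critical point $t_*$ of $\psi_u$ (your $g_u$) satisfies $t_*u\in\Gamma$, and on $\Gamma\cap H^+$ the quantity $B(u)-3\int_{\Om}\bigl(\int_{\Om}\frac{F(u)}{|x-y|^{\mu}|y|^{\alpha}}dy\bigr)\frac{f(u)u}{|x|^{\alpha}}dx+2b\|u\|^2-\la(3-q)H(u)$ has a strictly positive infimum $\Gamma_0$ for $\la<\la_0$. Combined with the identity \eqref{KC1}, this says every critical \emph{value} of $\psi_u$ strictly exceeds $\la H(u)$: no tangencies, no local minimum of $\psi_u$ at or below the Nehari level, hence exactly two crossings with strict signs $\psi_u'(t_1)>0>\psi_u'(t_2)$, and $N^0_{\la,M}=\emptyset$. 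Note also that $N^0_{\la,M}=\emptyset$ requires treating the cone $H(u)\le 0$ as well (the paper's Case 1, where uniqueness of the critical point of $\Phi_{u,M}$ is proved by a separate monotonicity argument using $f'(t)t\ge(p+1)f(t)$ and $tf(t)\ge(p+2)F(t)$); your proposal only analyzes the cone $H(u)>0$.

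Two secondary points. First, "Ekeland minimizing sequences are Palais--Smale sequences" is not automatic on a Nehari manifold: one needs the implicit-function-theorem projections $\xi$, $\xi^-$ (the paper's Lemmas \ref{compl1} and \ref{compli2}) and Proposition \ref{j}, and the denominators appearing there are controlled by the very same quantity $\Gamma_0>0$ from Lemma \ref{inf} — a second place where that estimate, missing from your plan, is essential. Second, your proposed Brezis--Lieb decomposition for the weighted Choquard double integral is unnecessary: since $\gamma<2$, the bound $f(t)\le C_{\e,\gamma}\exp(\e t^2)$ with $\e$ small, Theorem \ref{TM-ineq}, Proposition \ref{HLS} and Vitali's theorem make the Choquard terms converge along the weakly convergent (bounded) sequence, and strong convergence in $W_0^{m,2}(\Om)$ then follows from the Kirchhoff term as in Theorem \ref{exis1}; for the minimization on $N^-_{\la,M}$, the paper shows $N^-_{\la,M}\subset\Gamma$, whence $\|u\|\ge c>0$ there, so $N^-_{\la,M}$ is closed and the weak limit cannot collapse to $0$ — again via Lemma \ref{inf} rather than via any fibering-projection argument of the kind you sketch.
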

Turning to the layout of the paper: In section 2, we prove the existence result (Theorem \ref{kc-mt-1}) for the problem $(KC)$ and in section 3, we prove the existence and multiplicity result (Theorem \ref{first}) for the problem $(\mathcal{P}_{\la,\mc  M}).$

\section{Existence result for $(KC)$}
In this section, we establish the existence of a nontrivial weak solution for the problem $(KC)$. To prove this we study the mountain pass geometry of the energy functional $\mathcal{J}$ and using the properties of the non-local term $M$ and the exponential growth of $f$, we prove that every Palais Smale sequence is bounded. To study the compactness of Palais Smale sequences for $\mathcal{J}$, we obtain a bound for the mountain pass critical level with the help of Adams functions and establish the convergence of weighted Choquard term for Palais-Smale sequences.
\subsection{Mountain pass geometry}
In the following result, we show that the energy functional $\mathcal{J}$ possesses the mountain pass geometry around 0 in the light of Adams-Moser and doubly weighted Hardy-Littlewood-Sobolev inequality.
\begin{Lemma}\label{lemma3.1}
Under the assumptions (m1), (m2) and (h1)-(h3) the following assertions hold:
\begin{enumerate}[label=(\roman*)]
\item there exists $R_0>0, \eta >0$ such that $\mathcal{J}(u) \geq \eta $ for all $u \in W^{m,2}_0(\Om)$ such that $\|u\|= R_0.$
\item there exists a $v \in W_0^{m ,2}(\Omega)$ with $\|v\|>R_0$ such that $\mathcal{J}(v) <0.$
\end{enumerate}

\end{Lemma}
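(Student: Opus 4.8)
The plan is to treat the two terms of $\mc J$ separately, bounding the Kirchhoff term through the hypotheses on $M$ and the Choquard term through Proposition \ref{HLS} together with the growth estimate \eqref{kc-1} and the Adams--Moser inequality. For part (i), condition (m1) gives $\mc M(s) \geq M_0 s$, whence $\frac12 \mc M(\|u\|^2) \geq \frac{M_0}{2}\|u\|^2$. To bound the Choquard term from above I would apply the doubly weighted Hardy--Littlewood--Sobolev inequality of Proposition \ref{HLS} to $F(\cdot,u)$ in both slots (taking the two Lebesgue exponents equal by symmetry, with weight exponent $\beta = \alpha$), obtaining a bound of the form $C\,\|F(\cdot,u)\|_{L^\tau(\Om)}^2$ for a suitable $\tau \in (1,\infty)$ determined by $\mu$ and $\alpha$. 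Inserting the pointwise estimate \eqref{kc-1} and splitting $(a+b)^\tau \lesssim a^\tau + b^\tau$, the polynomial contribution $\int_\Om |u|^{(\beta_0+1)\tau}\,dx$ is controlled by the Sobolev embedding $W_0^{m,2}(\Om)\hookrightarrow L^q(\Om)$, while the exponential contribution $\int_\Om |u|^{r\tau}\exp((1+\e)\tau u^2)\,dx$ is handled by H\"older's inequality followed by Theorem \ref{TM-ineq}: writing $u = R_0 w$ with $\|w\|=1$, the Adams--Moser integral stays uniformly finite as long as $R_0$ is chosen small enough that the resulting exponent remains below $\zeta_{m,n}$. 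Collecting the estimates yields $\mc J(u) \geq \frac{M_0}{2}R_0^2 - C\big(R_0^{2(\beta_0+1)} + R_0^{2r}\big)$ on the sphere $\|u\| = R_0$; since we may take $\beta_0 > 0$ and $r > \beta_0+1$, both exponents exceed $2$, and choosing $R_0$ small produces the desired $\eta > 0$.

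For part (ii), I would fix a nonnegative $u_0 \in W_0^{m,2}(\Om)\setminus\{0\}$ and study $\mc J(tu_0)$ as $t \to +\infty$. By (m2) the primitive satisfies $\mc M(t^2\|u_0\|^2) \leq C(1 + t^2 + t^{2(k+1)})$, so the Kirchhoff term grows at most polynomially in $t$. For the Choquard term I would restrict the double integral to a ball $B \subset \Om$ on which $u_0 \geq \delta > 0$; since (h3) forces $f(x,s)/s^\ell$ to be increasing, one gets $F(x,tu_0) \geq c\,t^{\ell+1}$ on $B$ for $t$ large, and hence the Choquard term is bounded below by $c'\,t^{2(\ell+1)}\int_B\int_B \frac{dx\,dy}{|y|^\alpha |x-y|^\mu |x|^\alpha}$, a strictly positive and finite constant (the singularities are of integrable order since $\alpha < n$, $\mu < n$) times $t^{2(\ell+1)}$. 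Because $\ell > \max\{1,k+1\}$ we have $2(\ell+1) > 2\max\{1,k+1\}$, so the Choquard term eventually dominates the Kirchhoff term and $\mc J(tu_0) \to -\infty$. Taking $v = tu_0$ for $t$ large enough that $\|v\| > R_0$ finishes the proof.

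The main obstacle is the estimate in (i): the interplay of the H\"older exponents with the sharp threshold $\zeta_{m,n}$ of Theorem \ref{TM-ineq} must be arranged so that the exponential integral stays uniformly bounded on the sphere $\|u\| = R_0$, which is precisely what forces $R_0$ to be small; the remaining steps are routine applications of the Sobolev embedding and of the growth conditions on $M$ and $f$.
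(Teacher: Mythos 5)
Your proposal is correct and follows essentially the same route as the paper: part (i) is the identical chain of doubly weighted HLS (equal exponents, $\beta=\alpha$) $+$ the growth estimate \eqref{kc-1} $+$ H\"older and Theorem \ref{TM-ineq} with $\|u\|=R_0$ small, and part (ii) is the same dominance argument — polynomial growth $t^{2(k+1)}$ of $\mathcal{M}$ from (m2) beaten by the $t^{2(\ell+1)}$-type growth of the Choquard term from (h3) — the paper merely obtaining the Choquard lower bound via the global estimate $F(x,s)\geq c_1 s^{K_1}-c_2$ (with cross terms) instead of localizing. The only cosmetic point: for an arbitrary nonnegative $u_0\in W_0^{m,2}(\Om)\setminus\{0\}$ a ball on which $u_0\geq\delta$ pointwise need not exist, so either take $u_0$ smooth or replace the ball by a set of positive measure where $u_0\geq\delta$; the rest of your argument is unaffected.
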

\begin{proof}
Using Proposition \ref{HLS} with $t=r$ and $\beta=\alpha$ and \eqref{kc-1}, we obtain that for any $\e>0$ and $u\in W_0^{m ,2}(\Omega)$, there exist constants $C_i>0$ depending upon $\epsilon$  such that
\begin{equation*}\label{kc-MP1}
\begin{split}
&\int_{\Om}\left(\int_\Om \frac{F(y,u)}{|y|^\alpha|x-y|^{\mu}}dy\right)\frac{F(x,u)}{|x|^{\alpha}}~dx  \leq C(m,\mu,\alpha)\|F(x,u)\|_{L^\frac{2n}{2n-(2 \alpha +\mu)}}^2\\
& \leq \left( C_1 \int_\Om |u|^{\frac{2n(\beta_0+1)}{2n-(2 \alpha +\mu)}} +\ C_2 \int_\Om |u|^{\frac{2rn}{2n-(2 \alpha +\mu)}}\exp\left(\frac{2n(1+\e)}{2n-(2 \alpha +\mu)}|u|^{2} \right) \right)^{\frac{2n-(2 \alpha +\mu)}{n}}\\
 &\leq \left( {C_1} \int_\Om |u|^{\frac{2n(\beta_0+1)}{2n-(2 \alpha +\mu)}} +\ {C_2}  \|u\|^{\frac{2rn}{2n-(2\alpha+ \mu)}} \left( \int_\Om\exp\left(\frac{4n(1+\e)\|u\|^2}{2n-(2 \alpha +\mu)}\left(\frac{|u|}{\|u\|}\right)^{2}\right)\right)^{\frac12} \right)^{\frac{2n-(2 \alpha +\mu)}{n}}
 \end{split}
\end{equation*}
For small $\e>0$ and $u$ such that $\displaystyle\frac{4n(1+\e)\|u\|^{2}}{2n-(2 \alpha +\mu)} \leq \zeta_{m,2m}$, using Theorem \ref{TM-ineq}, we obtain
\begin{equation}\label{x1}
\begin{split}
\int_{\Om}\left(\int_\Om \frac{F(y,u)}{|y|^\alpha|x-y|^{\mu}}dy\right)\frac{F(x,u)}{|x|^{\alpha}}~dx & \leq { C_3} \left( \|u\|^{\frac{2n(\beta_0+1)}{2n-(2 \alpha +\mu)}}  + \|u\|^{\frac{2rn}{2n-(2 \alpha +\mu)}} \right)^{\frac{2n-(2 \alpha +\mu)}{n}}\\
 & \leq {C_4} (\|u\|^{2(\beta_0+1)}  +  \|u\|^{2r}).
 \end{split}
 \end{equation}
Then for $\|u\|<\rho= \left(\frac{\zeta_{m,2m}(2n-(2 \alpha +\mu))}{4n(1+\e)}\right)^{\frac{1}{2}}$,  $(m1)$ and \eqref{x1} gives
\begin{align*}
\mathcal{J}(u) &\geq M_0\frac{\|u\|^{2}}{2}-   C_4  \|u\|^{2(\beta_0+1)} - C_4 \|u\|^{2r}.
\end{align*}
So we choose $\|u\|= R_0$ small enough so that $\mathcal{J}(u) \geq \eta$ for some $\eta>0$ (depending on $R_0$)  and hence (i) follows. Furthermore $(m2)$ implies that
\begin{equation*}
\mathcal{M}(t)\leq \left\{
\begin{split}
&b_0+b_1t + \frac{b_2t^{k+1}}{k+1},\;k\neq -1\\
&b_0+b_1t + b_2\ln t,\;k= -1
\end{split}
\right.
\end{equation*}
 for $t\geq \hat t$ where
\begin{equation*}
b_0=\left\{
\begin{split}
&\mathcal{M}(\hat t)-b_1\hat t-b_2\frac{\hat t^{k+1}}{k+1},\;k\neq -1,\\
&\mathcal{M}(\hat t) - b_1\hat t - b_2\ln \hat t,\;k= -1.
\end{split}
\right.
\end{equation*}
Under the assumption (h3), there exists $ K_1 \geq \max \{1,k+1 \}$, $c_1, c_2>0$ such that $F(x,s) \geq c_1s^{K_1}-c_2$ for all $(x,s) \in \Om \times [0,\infty)$. Therefore for $v \in W_0^{m ,2}(\Omega)$ such that $v \geq  0$ and $\|v\|=1$ we get
\begin{align*}
&\int_\Om \left(\int_\Om \frac{F(y,tv)}{|y|^\alpha|x-y|^{\mu}}dy\right)\frac{F(x,tv)}{|x|^{\alpha}}~dx \geq \int_\Om \int_\Om \frac{(c_1(tv)^{K_1}(y)-c_2)(c_1(tv)^{K_1}(x)-c_2)}{|y|^\alpha |x|^\alpha |x-y|^{\mu}}~dxdy\\
  & = c_1^2 t^{2K_1} \int_\Om \int_\Om \frac{v^{K_1}(y)v^{K_1}(x)}{|y|^\alpha |x|^\alpha |x-y|^\mu}~dxdy  -2c_1c_2t^{K_1}\int_\Om \int_\Om\frac{v^{K_1}(y)}{|y|^\alpha |x|^\alpha |x-y|^\mu}~dxdy \\
  &\quad+ c_2^2 \int_\Om \int_\Om \frac{1}{|y|^\alpha |x|^\alpha  |x-y|^{\mu}}~dxdy.
\end{align*}
Then using above estimates in \eqref{functional} for $k \neq -1$, we obtain
\begin{align*}
\mathcal{J}(tv) &{\leq c_3+ c_4 t^{2} + c_5 t^{2(k+1)} - c_4t^{2K_1}+c_6t^{K_1}}
\end{align*}
and for $k=-1$
$$\mathcal{J}(tv) \leq c_3+ c_4 t^{2} + c_5 \ln(t^{2}) - c_4t^{2K_1}+c_6t^{K_1}$$
where $ c_i's$ are positive constants for $i=3, \dots,6$.
Now by choosing $t$ large enough, we obtain that there exists a $v\in W_0^{m ,2}(\Omega)$ with $\|v\|> R_0$ such that $\mathcal{J}(v)<0$.\hfill{\QED}
\end{proof}

\begin{Lemma}\label{kc-PS-bdd}
Every Palais Smale sequence of $\mathcal J$ is bounded in $W_0^{m ,2}(\Omega)$.
\end{Lemma}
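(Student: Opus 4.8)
The plan is to take an arbitrary Palais-Smale sequence $\{u_k\}\subset W_0^{m,2}(\Om)$ at some level $c$, so that $\mathcal{J}(u_k)\to c$ and $\mathcal{J}^\prime(u_k)\to 0$ in the dual space, and to extract boundedness of $\|u_k\|$ from a suitable linear combination of these two conditions. First I would write down the standard estimate
\[
\mathcal{J}(u_k)-\frac{1}{\theta}\langle \mathcal{J}^\prime(u_k),u_k\rangle = C + o(1)\|u_k\|
\]
for a parameter $\theta\geq 4$ to be chosen, and expand both sides using the explicit form of $\mathcal{J}$ in \eqref{functional} and of $\mathcal{J}^\prime$ read off from \eqref{weakfunctional}. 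The Kirchhoff contribution to this combination is exactly $\frac{1}{2}\mathcal{M}(\|u_k\|^2)-\frac{1}{\theta}M(\|u_k\|^2)\|u_k\|^2$, which by the deduced monotonicity property $(m3)^\prime$ is non-negative and non-decreasing in $\|u_k\|^2$; together with the lower bound $M(t)\geq M_0$ from $(m1)$, this is the term I expect to dominate and eventually force $\|u_k\|$ to stay bounded.

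The key algebraic point is the Choquard term. After the linear combination, the nonlocal double integral appears with coefficient $\frac{1}{\theta}$ multiplying $\int_\Om\big(\int_\Om \frac{F(y,u_k)}{|y|^\alpha|x-y|^\mu}dy\big)\frac{f(x,u_k)u_k}{|x|^\alpha}dx$ minus $\frac12$ times the analogous integral with $F(x,u_k)$ in place of $f(x,u_k)u_k$. Here I would invoke hypothesis $(h3)$, which (as remarked in the text) guarantees $\frac{f(x,t)}{t^\ell}$ increasing and hence yields the Ambrosetti-Rabinowitz type inequality $\theta F(x,t)\leq f(x,t)t$ for $\theta=\ell+1$ (or any admissible $\theta\geq 4$ compatible with $\ell>\max\{1,k+1\}$). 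Choosing $\theta$ in this range makes the combined Choquard term have a favourable sign, so that it can be discarded from the lower bound rather than needing to be controlled. This is the step where the precise interplay between $\ell$, $k$, and $\theta\geq 4$ matters, and I would record carefully that the admissible choice of $\theta$ is consistent with both $(m3)^\prime$ and the AR-inequality.

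Putting these together gives an inequality of the schematic form
\[
\frac{1}{2}\mathcal{M}(\|u_k\|^2)-\frac{1}{\theta}M(\|u_k\|^2)\|u_k\|^2 \leq C + o(1)\|u_k\|,
\]
and the main obstacle is to convert the left-hand side into a genuine coercive bound in $\|u_k\|$. I expect the hard part to be this coercivity: unlike the scalar Laplacian case, $\mathcal{M}$ grows only like the primitive of $M$, and under $(m2)$ one only controls $M(t)\leq b_1+b_2 t^k$, so I would use $(m1)$ (namely $M_0 t\leq \mathcal{M}(t)$, which follows from $M\geq M_0$) to bound the left side below by a positive multiple of $\|u_k\|^2$, at least for $\|u_k\|$ large. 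Then the inequality reads $c_0\|u_k\|^2\leq C+o(1)\|u_k\|$, which immediately yields $\sup_k\|u_k\|<\infty$ by a standard contradiction argument assuming $\|u_k\|\to\infty$ along a subsequence. I would finish by noting that since all terms were estimated uniformly and the threshold $T$ in $(h4)$ only affects a bounded region, the argument is insensitive to the exponential critical growth of $f$ at this stage, the exponential difficulties being deferred to the compactness analysis of the Palais-Smale sequence rather than its boundedness.
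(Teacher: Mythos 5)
Your strategy is the same as the paper's: pair $\mathcal{J}'(u_k)$ with $u_k$, form the combination $\mathcal{J}(u_k)-\tfrac{1}{\sigma}\langle\mathcal{J}'(u_k),u_k\rangle$, discard the Choquard contribution via an Ambrosetti--Rabinowitz inequality derived from (h3), and get a quadratic lower bound from the Kirchhoff part via $(m3)'$ and (m1). However, two of your steps fail as written. First, the parameter bookkeeping: you ask for a single $\theta$ that is simultaneously $\geq 4$ (so that $(m3)'$ applies to $\tfrac12\mathcal{M}(t)-\tfrac1\theta M(t)t$) and an AR exponent for $f$. But (h3) only gives $(\ell+1)F(x,t)\leq f(x,t)t$ with $\ell>\max\{1,k+1\}$, i.e.\ an AR exponent that is merely $>2$; if, say, $\ell=2$, there is no $\theta$ with $4\leq\theta\leq\ell+1$, and your ``admissible range'' is empty. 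The paper's resolution is the coefficient $\tfrac{1}{2\theta}$ instead of $\tfrac1\theta$: since the Choquard energy is \emph{quadratic} in $F$, the scalar inequality $\theta F\leq ft$ upgrades to $\theta\int\!\!\int FF\leq\int\!\!\int Ffu$ (this is \eqref{kc-PS-bdd3}), so in the combination $\mathcal{J}-\tfrac{1}{2\theta}\langle\mathcal{J}',\cdot\rangle$ the Choquard part is non-negative while the Kirchhoff part $\tfrac12\mathcal{M}-\tfrac{1}{2\theta}Mt$ falls under $(m3)'$ with parameter $2\theta\geq 4$. In your normalization the true constraint is $4\leq\sigma\leq 2(\ell+1)$, a window that is nonempty precisely because of this factor of $2$ (as $\ell>1$); this is the point you left unverified.

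Second, your coercivity step --- bounding $\tfrac12\mathcal{M}(t)-\tfrac{1}{\sigma}M(t)t$ below by a positive multiple of $t$ using only (m1), i.e.\ $\mathcal{M}(t)\geq M_0t$ --- is false: take the admissible $M(t)=M_0+bt^\beta$ with $0<\beta<1$; throwing away the superlinear part of $\mathcal{M}$ leaves $\left(\tfrac12-\tfrac1\sigma\right)M_0t-\tfrac{b}{\sigma}t^{1+\beta}\to-\infty$. The negative term $-\tfrac1\sigma M(t)t$ must be controlled by (m3), not (m1): since $M(t)/t$ is non-increasing, $\mathcal{M}(t)\geq\tfrac12 M(t)t$, whence $\tfrac12\mathcal{M}(t)-\tfrac{1}{2\theta}M(t)t\geq\left(\tfrac12-\tfrac1\theta\right)\mathcal{M}(t)\geq\left(\tfrac12-\tfrac1\theta\right)M_0t$, which is a genuine quadratic bound in $\|u_k\|$ because $\theta>2$. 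This is exactly the chain in \eqref{kc-PS-bdd4}. With these two corrections (both of which are what the paper actually does), your argument becomes the paper's proof.
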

\begin{proof}
Let $\{u_k\} \subset W_0^{m ,2}(\Omega)$ be a Palais Smale sequence for $\mathcal{J}$ at level $c$ (denoted by $(PS)_c$ for some $c \in \mathbb{R}$)  {\it i.e.}
\begin{equation*}
\mathcal{J}(u_k) \to c \; \text{and}\; \mathcal{J}^\prime(u_k) \to 0\;\text{as}\; k \to \infty.
\end{equation*}
Then from \eqref{functional} and \eqref{weakfunctional}, we obtain
\begin{equation}\label{kc-PS-bdd1}
\begin{split}
&\frac{1}{2} \mathcal{M}(\|u_k\|^2) - \frac12 \int_\Om \left(\int_\Om \frac{F(y,u_k)}{|y|^\alpha|x-y|^{\mu}}dy \right)\frac{F(x,u_k)}{|x|^{\alpha}}~dx \to c \; \text{as}\; k \to \infty,\\
&\left| M(\|u_k\|^{2})\int_\Om  \nabla^m u_k .\nabla^m\phi -\int_\Om \left(\int_\Om \frac{F(y,u_k)}{|y|^\alpha |x-y|^{\mu}}dy \right)\frac{f(x,u_k)}{|x|^\alpha}\phi ~dx \right|\leq \e_k\|\phi\|
\end{split}
\end{equation}
for any $\phi \in W_0^{m ,2}(\Omega)$, where $\e_k \to 0$ as $k\to \infty$. By substituting $\phi=u_k$ we get
\begin{equation}\label{kc-PS-bdd2}
\left| M(\|u_k\|^{2})\int_\Om |\nabla^m u_k|^{2}-\int_\Om \left(\int_\Om \frac{F(y,u_k)}{|y|^\alpha |x-y|^{\mu}}dy \right)\frac{f(x,u_k)u_k}{|x|^\alpha} ~dx \right|\leq \e_k\|u_k\|.
\end{equation}
Using assumption (h3), we get that there exists a $\theta>2 $ such  that $\theta F(x,t)\leq tf(x,t)$ for any $t>0$ and $ x\in \Om$ which implies
\begin{equation}\label{kc-PS-bdd3}
\theta \int_\Om \left(\int_\Om \frac{F(y,u_k)}{|y|^\alpha |x-y|^{\mu}}dy \right)\frac{F(x,u_k)}{|x|^{\alpha}}~dx \leq \int_\Om \left(\int_\Om \frac{F(y,u_k)}{|y|^\alpha |x-y|^{\mu}}dy \right)\frac{f(x,u_k)u_k}{|x|^{\alpha}} ~dx.
\end{equation}
Now using  \eqref{kc-PS-bdd1}, \eqref{kc-PS-bdd2}, \eqref{kc-PS-bdd3} and $(m3)^\prime$, we get
\begin{equation}\label{kc-PS-bdd4}
\begin{split}
& \mathcal{J}(u_k)- \frac{1}{2\theta}\langle \mathcal{J}^\prime(u_k),u_k\rangle
=\frac{1}{2} \mathcal{M}(\|u_k\|^2)- { \frac{1}{2 \theta}} M(\|u_k\|^2)\|u_k\|^{2}\\
& \quad -\frac12 \left( \int_\Om \left(\int_\Om \frac{F(y,u_k)}{|y|^\alpha |x-y|^{\mu}}dy \right)\frac{F(x,u_k)}{|x|^\alpha}~dx+ \frac{1}{2\theta}\int_\Om \left(\int_\Om \frac{F(y,u_k)}{|y|^\alpha |x-y|^{\mu}}dy \right)\frac{f(x,u_k)u_k}{|x|^{\alpha}} ~dx\right)\\
&\geq \frac{1}{2}\mathcal{M}(\|u_k\|^{2})- \frac{1}{2 \theta} M(\|u_k\|^{2}) \|u_k\|^{2}  \geq  \left(\frac{1}{2}- \frac{1}{2 \theta}\right) M_0 \|u_k\|^2.
\end{split}
\end{equation}
Also \eqref{kc-PS-bdd1} and \eqref{kc-PS-bdd2} yields
\begin{equation}\label{kc-PS-bdd5}
\mathcal{J}(u_k)- \frac{1}{2\theta}\langle \mathcal{J}^\prime(u_k),u_k\rangle \leq C \left( 1+ \e_k \frac{\|u_k\|}{2\theta}\right)
\end{equation}
for some $C>0$. Therefore \eqref{kc-PS-bdd4} and \eqref{kc-PS-bdd5} gives us the desired result.
\hfill{\QED}
\end{proof}
\subsection{Mountain pass critical level}
To obtain bound for the mountain pass critical level in this section, we use Adams functions to construct a sequence of test functions. Let $\mathcal{B}$ denotes the unit ball and $ \mathcal{B}_l$ is the ball with center $0$ and radius $l$ in $\mathbb{R}^n$. Without loss of generality, we can assume that $\mathcal{B}_l \subset \Omega$, then from \cite[Lemma 5, p. 895]{LO}, we have the following result- For $l\in (0,1)$, there exists
\begin{equation}\label{ul}
U_l \in \{u \in W_0^{m,2}(\Omega): u|_{\mathcal{B}_l}=1 \}
\end{equation} such that $$\|U_l\|^{2}= C_{m,2}(\mathcal{B}_l; \mathcal{B}) \leq \frac{ \zeta_{m,2m}}{n \log\left(\frac{1}{l}\right)}$$ where $C_{m,2}(K,E)$ is the conductor capacity of $K$ in $E$ whenever $E$ is an open set and $K$ is relatively compact subset of $E$ and $C_{m ,2}(K;E) \eqdef \inf\{\|u\|^{2}: u \in C_0^\infty(E), u|_{K}=1\}.$ \\
Let $\tilde{x} \in \Omega$ and $R \leq  R_0= \mbox{dist}(\tilde{x}, \partial \Omega)$. Then the Adams function $\tilde A_r$ is defined as
\begin{equation*}
\tilde A_r(x)=\left\{
\begin{split}
&\left(\frac{n \log\left(\frac{R}{r}\right)}{\zeta_{m,2m}}\right)^\frac{1}{2} U_{\frac{r}{R}}\left(\frac{x-\tilde{x}}{R}\right) \  \text{if} \ |x-\tilde{x}| < R,\\
& 0  \quad \quad \quad \quad \quad \quad \quad \quad \quad \quad \quad \quad \  \text{if} \ |x-\tilde{x}| \geq  R\\
\end{split}
\right.
\end{equation*}
where $0<r<R$, ${U_{l=\frac{r}{R}}}$ is as in \eqref{ul} and ${\|\tilde{A}}_r\|\leq 1.$\\
Let $\sigma>0$ (to be chosen later), $\tilde{x}=0$, $R= \sigma$ and $r = \frac{\sigma}{k}$ for $k \in \mb N$, then we define
\begin{equation*}
{A_k(x) \eqdef \tilde{A}_{\frac{\sigma}{k}}(x)}=\left\{
\begin{split}
&\left(\frac{n \log(k)}{\zeta_{m,2m}}\right)^\frac{1}{2} U_{\frac{1}{k}}\left(\frac{x}{\sigma}\right) \  \quad \text{if} \ |x| < \sigma,\\
& 0 \ \ \ \ \quad \quad \quad \ \ \ \ \  \ \ \ \ \ \ \ \ \ \ \ \ \  \text{if} \ |x| \geq  \sigma.\\
\end{split}
\right.
\end{equation*}
Then $A_k(0)=\left(\frac{n \log(k)}{ \zeta_{m,2m}}\right)^\frac{1}{2}$ and $\|A_k\| \leq 1.$\\
 We define the mountain pass critical level as\
\begin{equation}\label{new2}
l^* = \inf_{\vartheta \in \Gamma}\max_{t\in[0,1]} \mathcal{J}(\vartheta(t)).
\end{equation}
where $\Gamma = \{\vartheta \in C([0,1], W^{m, 2}_0(\Om)):\; \vartheta(0)=0, \;\mathcal{J}(\vartheta(1))<0\}$. Now we analyze the first critical level and study the convergence of Palais-Smale sequence below this level.

\begin{Theorem}\label{PS-level}
Under the assumption \eqref{h-growth},
\begin{equation}\label{crilevel}
0<l^* < \frac{1}{2} \mathcal{M} \left( \frac{2n-(2 \alpha +\mu)}{2n}{\zeta_{m,2m}}\right).
\end{equation}
\end{Theorem}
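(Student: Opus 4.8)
The lower bound $l^\ast>0$ is immediate from the mountain pass geometry. By Lemma \ref{lemma3.1}(i) we have $\mathcal{J}\geq\eta>0$ on the sphere $\{\|u\|=R_0\}$, while the estimate there also gives $\mathcal{J}(u)>0$ for all small $\|u\|$. Any $\vartheta\in\Gamma$ starts at $\vartheta(0)=0$ and ends at a point with $\mathcal{J}(\vartheta(1))<0$, so $\|\vartheta(1)\|>R_0$; by continuity of $t\mapsto\|\vartheta(t)\|$ the path meets the sphere at some $t_0$, whence $\max_t\mathcal{J}(\vartheta(t))\geq\mathcal{J}(\vartheta(t_0))\geq\eta$. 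Taking the infimum over $\Gamma$ in \eqref{new2} gives $l^\ast\geq\eta>0$.

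For the upper bound I would test with the Adams functions $A_k$. Since $F\geq0$, the estimate $F(x,s)\geq c_1s^{K_1}-c_2$ used in Lemma \ref{lemma3.1}(ii) shows $\mathcal{J}(sA_k)\to-\infty$ as $s\to\infty$, so there is $s^\ast$ with $\mathcal{J}(s^\ast A_k)<0$; the path $t\mapsto t\,s^\ast A_k$ lies in $\Gamma$ and yields $l^\ast\leq\max_{s\geq0}\mathcal{J}(sA_k)$. It therefore suffices to prove that, for a suitable choice of $\sigma$ and all large $k$, $\max_{s\geq0}\mathcal{J}(sA_k)<\tfrac12\mathcal{M}\big(\tfrac{2n-(2\alpha+\mu)}{2n}\zeta_{m,2m}\big)$. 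Suppose, for contradiction, the reverse inequality holds for every $k$, and let $s_k>0$ realize the maximum. Because the Choquard term in \eqref{functional} is nonnegative, $\mathcal{J}(s_kA_k)\leq\tfrac12\mathcal{M}(s_k^2\|A_k\|^2)$; since $\mathcal{M}$ is strictly increasing ($M\geq M_0>0$) and $\|A_k\|\leq1$, this forces $s_k^2\|A_k\|^2\geq\tfrac{2n-(2\alpha+\mu)}{2n}\zeta_{m,2m}$, hence $s_k^2\geq\tfrac{2n-(2\alpha+\mu)}{2n}\zeta_{m,2m}$.

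The contradiction then comes from the critical-point identity $\frac{d}{ds}\mathcal{J}(sA_k)|_{s_k}=0$, which after multiplying by $s_k$ reads
\[
M(s_k^2\|A_k\|^2)\,s_k^2\|A_k\|^2=\int_\Om\Big(\int_\Om\frac{F(y,s_kA_k)}{|y|^\alpha|x-y|^\mu}\,dy\Big)\frac{f(x,s_kA_k)\,s_kA_k}{|x|^\alpha}\,dx .
\]
Restricting both integrations to the concentration ball $\mathcal{B}_{\sigma/k}$, where $A_k\equiv A_k(0)=\big(\tfrac{n\log k}{\zeta_{m,2m}}\big)^{1/2}$, the integrand's numerator equals exactly $s\,f(x,s)F(y,s)$ with $s=s_kA_k(0)$, which by the superexponential hypothesis \eqref{h-growth} is bounded below by $\Lambda_k\exp(2s_k^2A_k(0)^2)$ with $\Lambda_k\to\infty$. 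Using $s_k^2\geq\tfrac{2n-(2\alpha+\mu)}{2n}\zeta_{m,2m}$ gives $\exp(2s_k^2A_k(0)^2)\geq k^{\,2n-(2\alpha+\mu)}$, while the scaling of the weighted kernel yields $\int_{\mathcal{B}_{\sigma/k}}\!\int_{\mathcal{B}_{\sigma/k}}\frac{dx\,dy}{|x|^\alpha|y|^\alpha|x-y|^\mu}\sim(\sigma/k)^{\,2n-2\alpha-\mu}$. The two powers of $k$ cancel, so the right-hand side is bounded below by $c\,\Lambda_k\sigma^{\,2n-2\alpha-\mu}\to\infty$, whereas the left-hand side is controlled polynomially in $s_k^2\|A_k\|^2$ through the Kirchhoff growth bound (m2); this is the desired contradiction.

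The delicate point, and the step I expect to be the main obstacle, is precisely this exponential/weight balance: one must verify that the powers of $k$ coming from $\exp(2s_k^2A_k(0)^2)$ (through the sharp capacity asymptotics $\|A_k\|^2\leq1$, $A_k(0)^2=\tfrac{n\log k}{\zeta_{m,2m}}$) and from the homogeneity $R^{\,2n-2\alpha-\mu}$ of the doubly weighted kernel cancel exactly, so that the residual factor $\Lambda_k$ from \eqref{h-growth} drives the divergence. A subsidiary issue is ruling out, or absorbing, the case $s_k^2\|A_k\|^2\to\infty$: there the left-hand side also grows, but only polynomially by (m2), while the right-hand side grows like $k^{\,2s_k^2n/\zeta_{m,2m}-(2n-2\alpha-\mu)}$, which still wins for large $k$, so the contradiction persists in both regimes.
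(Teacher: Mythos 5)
Your proposal is correct and follows essentially the same route as the paper's proof: reduce the bound to $\max_{s\geq 0}\mathcal{J}(sA_k)$ over the Adams functions, use nonnegativity of the Choquard term and monotonicity of $\mathcal{M}$ to force $s_k^2\geq \frac{2n-(2\alpha+\mu)}{2n}\zeta_{m,2m}$ at the maximizer, then contradict the critical-point identity by restricting to the concentration ball, invoking \eqref{h-growth} and the kernel estimate $\int\int \frac{dx\,dy}{|x|^\alpha|y|^\alpha|x-y|^\mu}\geq C_{\mu,n}(\sigma/k)^{2n-(2\alpha+\mu)}$, so that the powers of $k$ cancel and the divergent factor from \eqref{h-growth} wins. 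The only (harmless) variation is that the paper first deduces boundedness of the maximizing parameters $t_k$, making the left-hand side bounded, whereas you additionally absorb the possibly unbounded regime via the polynomial growth bound (m2); both versions yield the same contradiction.
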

\begin{proof}
We have observed in Lemma \ref{lemma3.1} for $u\in W^{m,2}_0(\Om)\setminus \{0\}$, $\mathcal{J}(tu) \to -\infty$ as $t\to \infty$ and $l^* \leq \max_{t\in[0,1]} \mathcal{J}(tu)$ for $u \in W_0^{m, 2}(\Omega)\backslash\{0\}$ satisfying $\mathcal{J}(u)<0$.  So it is enough to prove that there exists a $k \in \mb N$ such that
\[\max_{t\in[0,\infty)} \mathcal{J}(t A_k) < \frac{1}{2} \mathcal{M} \left( \frac{2n-(2 \alpha +\mu)}{2n} \zeta_{m,2m}\right).\]
We establish the above claim by contradiction. Suppose this is not true, then for all $k \in \mb N$ there exists a $t_k>0$ such that
\begin{equation}\label{kc-PScond0}
\begin{split}
&\max_{t\in[0,\infty)} \mathcal{J}(t A_k) = \mathcal{J}(t_k A_k) \geq \frac{1}{2} \mathcal{M}\left(\frac{2n-(2 \alpha +\mu)}{2n} \zeta_{m,2m}\right)\\
& \text{and}\;  \frac{d}{dt}(\mathcal{J}(t A_k))|_{t=t_k}=0.
\end{split}
\end{equation}
From Lemma \ref{lemma3.1} and  \eqref{kc-PScond0}, we obtain $\{t_k\}$ must be a bounded sequence in $\mb R$ and
\begin{equation}\label{kc-PScond1}
{\frac{1}{2} \mathcal{M}\left( \frac{2n-(2 \alpha +\mu)}{2n}{\zeta_{m,2m}}\right) < \frac{1}{2}\mathcal{M}(t_k^2)}
\end{equation}
Then monotonicity of $\mathcal{M}$ implies that
\begin{equation}\label{kc-PScond2}
t_k^2 > \left( \frac{2n-(2 \alpha +\mu)}{2n}{\zeta_{m,2m}}\right).
\end{equation}
Consequently, by using \eqref{kc-PScond0} and choosing $\sigma, { k} $ such that $B_{\sigma/{ k}} \subset \Omega $, we obtain
\begin{equation}\label{kc-PS-cond3}
\begin{split}
M((\|t_k A_k\|)^2) \|t_k A_k\|^2 &= \int_\Om \left(\int_\Om \frac{F(y,t_k A_k)}{|y|^{{\alpha}}|x-y|^{\mu}}dy\right)\frac{f(x,t_k A_k)t_k A_k}{|x|^{\alpha}} ~dx\\
& \geq \int_{B_{\frac{\sigma}{{ k}}}}\left(\int_{B_{\frac{\sigma}{{ k}}}}\frac{F(y,t_k A_k)}{|y|^\alpha|x-y|^\mu}~dy\right)\frac{f(x,t_k A_k)t_k A_k}{|x|^{\alpha}} ~dx.
\end{split}
\end{equation}
For a positive constant $C_{\mu, n}$ depending on $\mu$ and $n$, we obtain (see equation $(2.11)$, page. 1943, \cite{ACTY})
\[\int_{B_{\frac{\sigma}{{ k}}}}\int_{B_{\frac{\sigma}{{ k}}}} \frac{~dxdy}{|y|^\alpha|x|^{\alpha} |x-y|^\mu} \geq C_{\mu, n} \left(\frac{\sigma}{{ k}}\right)^{2n-(2\alpha+\mu)}.\]
From \eqref{h-growth}, we know that for each $\rho>0$ there exists a $s_\rho>0$ such that
\[sf(x,s)F(x,s) \geq \rho \exp\left( 2 s^2 \right),\; \text{whenever}\; s \geq s_\rho.\]
Using this in \eqref{kc-PS-cond3}, we obtain, for some $C>0$
\begin{equation*}
M(\|t_k A_k\|^2)t_k^2 \geq \rho \exp \left(2 |t_k A_k(0)|^2\right)C_{\mu,n}\left(\frac{\sigma}{{k}}\right)^{2n-(2 \alpha +\mu)} \geq C \ k^{\frac{2n t_k^2}{\xi_{m, 2m}}-(2n-(2 \alpha+\mu))}.
\end{equation*}
Now from \eqref{kc-PScond2}, it follows that taking $k$ large enough, we arrive at a contradiction. This completes the proof of the result. \hfill{\QED}
\end{proof}

\begin{Lemma}\label{kc-ws}
Let $\{u_k\}\subset W_0^{m,2}(\Om)$ be a Palais Smale sequence for $\mathcal{J}$ at $c\in \mb R$ then there exists a $u_0\in W_0^{m, 2}(\Om)$ such that as $k \to \infty$ (up to a subsequence)
\[\int_\Om \left(\int_\Om \frac{F(y,u_k)}{|y|^\alpha |x-y|^{\mu}}dy\right)\frac{f(x,u_k)}{|x|^\alpha}\phi ~dx\to \int_\Om \left(\int_\Om \frac{F(y,u_0)}{|y|^\alpha|x-y|^{\mu}}dy\right)\frac{f(x,u_0)}{|x|^\alpha}\phi~dx \]
for all $\phi\in C_c^\infty(\Om)$.
\end{Lemma}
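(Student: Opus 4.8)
The plan is to upgrade the weak convergence of the Palais--Smale sequence to strong convergence of the Choquard data, and then pass to the limit inside the bilinear Hardy--Littlewood--Sobolev pairing. First I would invoke Lemma \ref{kc-PS-bdd}: since $\{u_k\}$ is bounded in $W_0^{m,2}(\Om)$, after passing to a subsequence there is $u_0\in W_0^{m,2}(\Om)$ with $u_k\rightharpoonup u_0$ weakly in $W_0^{m,2}(\Om)$. By the Rellich--Kondrachov embedding together with the compactness of the map $u\mapsto e^{|u|^\beta}/|x|^\al$ for $\beta\in[1,\tfrac{n}{n-m})$ recalled right after Theorem \ref{TM-ineq1}, I get $u_k\to u_0$ strongly in $L^q(\Om)$ for every $q\in[1,\infty)$ and $u_k\to u_0$ a.e. in $\Om$. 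By continuity of $f$ and $F$ this already yields $f(x,u_k)\to f(x,u_0)$ and $F(x,u_k)\to F(x,u_0)$ a.e. in $\Om$.

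The core step is to show $F(x,u_k)\to F(x,u_0)$ strongly in $L^{s}(\Om)$ with $s=\tfrac{2n}{2n-(2\al+\mu)}$, and that $f(x,u_k)\phi$ stays bounded in $L^s(\Om)$. Starting from the pointwise bound \eqref{kc-1}, one has $|F(x,u_k)|^{s}\le C\big(|u_k|^{s(\beta_0+1)}+|u_k|^{sr}\exp(s(1+\e)|u_k|^2)\big)$; the polynomial terms converge in $L^1(\Om)$ by the strong $L^q$ convergence, while for the exponential term I would use H\"older's inequality to split off $\exp(s(1+\e)p\,|u_k|^2)$ and control it by the Adams--Moser inequality (Theorem \ref{TM-ineq}), writing $|u_k|^2=\|u_k\|^2\,(|u_k|/\|u_k\|)^2$ and using that $\|u_k\|$ is bounded so that the integral stays finite for $\e$ small and $p$ close to $1$. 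This produces a uniform bound of $\{F(x,u_k)\}$ in $L^{s_0}(\Om)$ for some $s_0>s$; combined with the a.e. convergence, the Vitali convergence theorem (equi-integrability coming from the higher integrability) upgrades it to strong convergence in $L^s(\Om)$. The same scheme, restricted to the compact set $\mathrm{supp}\,\phi$ and using $\al<\tfrac n2$ to absorb the weight $|x|^{-\al}$, shows that $f(x,u_k)\phi$ is bounded in $L^s(\Om)$, hence (being reflexive for $s>1$ and a.e. convergent) $f(x,u_k)\phi\rightharpoonup f(x,u_0)\phi$ weakly in $L^s(\Om)$.

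Once these convergences are in hand, I rewrite the quantity to pass to the limit as the doubly weighted bilinear form
$\int_\Om\!\big(\int_\Om \tfrac{F(y,u_k)}{|y|^\al|x-y|^\mu}\,dy\big)\tfrac{f(x,u_k)}{|x|^\al}\phi\,dx
=\int_\Om\!\int_\Om \tfrac{F(y,u_k)\,[f(x,u_k)\phi(x)]}{|y|^\al|x|^\al|x-y|^\mu}\,dx\,dy$,
which is precisely the form controlled by Proposition \ref{HLS} with $t=r=s$ and $\beta=\al$. Since this bilinear map $B:L^s(\Om)\times L^s(\Om)\to\R$ is continuous, and since $F(x,u_k)\to F(x,u_0)$ strongly in $L^s$ while $f(x,u_k)\phi\rightharpoonup f(x,u_0)\phi$ weakly in $L^s$, the product of a strongly convergent and a weakly convergent factor converges to the product of the limits. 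This delivers exactly $\int_\Om\big(\int_\Om \tfrac{F(y,u_0)}{|y|^\al|x-y|^\mu}\,dy\big)\tfrac{f(x,u_0)}{|x|^\al}\phi\,dx$, which is the claim.

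The step I expect to be the genuine obstacle is the uniform control of the critical exponential term, i.e. the equi-integrability needed to promote a.e. convergence to strong $L^s$ convergence of $F(x,u_k)$. Boundedness of $\|u_k\|$ alone does not guarantee that $\|u_k\|^2$ lies below the sharp Adams--Moser threshold $\zeta_{m,2m}$, so a na\"ive H\"older splitting can fail. The careful point is to exploit the slack in \eqref{kc-1} (the freedom in $\e>0$ and in the H\"older exponent $p>1$) together with the sharpness of $\zeta_{m,2m}$ and, if needed, the Palais--Smale bound $\int_\Om\big(\int_\Om \tfrac{F(y,u_k)}{|y|^\al|x-y|^\mu}dy\big)\tfrac{f(x,u_k)u_k}{|x|^\al}\,dx\le C$ following from \eqref{kc-PS-bdd2}, to run a Vitali/generalized dominated convergence argument (in the spirit of the de Figueiredo--Miyagaki--Ruf convergence lemma) that does not require $\|u_k\|$ to be small. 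Everything else is routine once this equi-integrability is secured.
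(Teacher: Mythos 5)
There is a genuine gap in your argument, and it sits exactly where you yourself flagged it: the core step cannot be carried out. Your scheme needs a uniform bound on $\{F(x,u_k)\}$ in $L^{s_0}(\Omega)$ for some $s_0>s=\tfrac{2n}{2n-(2\alpha+\mu)}$ (and similarly boundedness of $f(x,u_k)\phi$ in $L^{s}(\Omega)$), obtained from Theorem \ref{TM-ineq} after writing $|u_k|^2=\|u_k\|^2(|u_k|/\|u_k\|)^2$. But Adams' inequality controls $\int_\Omega \exp\left(\zeta |u_k|^2\right)dx$ uniformly only when $\zeta\|u_k\|^2\leq \zeta_{m,2m}$, whereas Lemma \ref{kc-PS-bdd} gives just $\|u_k\|\leq C_0$ with $C_0$ depending on the level $c$. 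The present lemma is stated, and is used in the proof of Theorem \ref{kc-mt-1}, for an \emph{arbitrary} Palais--Smale level $c\in\mathbb{R}$, i.e.\ precisely before any information placing $\|u_k\|^2$ below the threshold is available; no choice of $\epsilon>0$ or of H\"older exponent close to $1$ rescues this, since the obstruction is $\|u_k\|^2$ itself, not the constants multiplying it. Your proposed fallback does not close the gap either: the Palais--Smale bound $\int_\Omega\left(\int_\Omega \frac{F(y,u_k)}{|y|^\alpha|x-y|^\mu}dy\right)\frac{f(x,u_k)u_k}{|x|^\alpha}dx\leq C$ combined with a de Figueiredo--Miyagaki--Ruf/Vitali argument yields at best $L^1$-type convergence of the nonlinear terms (this is essentially what Lemma \ref{PS-ws} extracts, and even there the hypothesis (h4) and the Riesz semigroup property are invoked), which is strictly weaker than the strong $L^s$ convergence with $s>1$ that your bilinear Hardy--Littlewood--Sobolev continuity argument requires. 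At critical exponential growth, concentration of $u_k$ can genuinely destroy any $L^{s}$ bound with $s>1$, so the ``strong $\times$ weak'' limit passage cannot be run.

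The paper's proof avoids exponential integrability altogether, and this is why it works at every level. It tests the Palais--Smale condition \eqref{kc-PS-bdd1} with a cutoff $\varphi\in C_c^\infty(\Omega)$, $0\leq\varphi\leq 1$, $\varphi\equiv 1$ on $\Omega'\subset\subset\Omega$, which immediately gives that $w_k:=\left(\int_\Omega \frac{F(y,u_k)}{|y|^\alpha|x-y|^\mu}dy\right)\frac{f(x,u_k)}{|x|^\alpha}$ is bounded in $L^1_{\mathrm{loc}}(\Omega)$ -- no Adams inequality needed, only $\|u_k\|\leq C_0$. Then, up to a subsequence, $w_k$ converges weak* to a Radon measure $w$; the Palais--Smale identity together with the weak convergence of $u_k$ shows $w$ is absolutely continuous with respect to Lebesgue measure, and Radon--Nikodym plus the a.e.\ convergence of $u_k$ identify the density with $\left(\int_\Omega \frac{F(y,u_0)}{|y|^\alpha|x-y|^\mu}dy\right)\frac{f(x,u_0)}{|x|^\alpha}$. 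If you want to salvage your functional-analytic route, it can only work in the restricted regime where one already knows $\limsup_k\|u_k\|^2$ is below the Adams--Moser threshold (as happens late in the proof of Theorem \ref{kc-mt-1}, via \eqref{kc-mt-15}); it cannot prove the lemma as stated.
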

\begin{proof}
If $\{u_k\}$ is a Palais Smale sequence at $l^*$ for $\mathcal{J}$ satisfying \eqref{kc-PS-bdd1} and \eqref{kc-PS-bdd2}. 
From Lemma \ref{kc-PS-bdd}, we obtain that $\{u_k\}$ is bounded in $W_0^{m, 2}(\Om)$ so there exists a $u_0 \in W_0^{m, 2}(\Om)$ such that up to a subsequence $u_k \rightharpoonup u_0$ weakly in $W_0^{m, 2}(\Om)$, strongly in $L^{q}(\Om)$ for all $q \in [1,\infty)$ and pointwise a.e. in $\Om$ as $k \to \infty$.  Let $\Om' \subset\subset \Om$ and $\varphi \in C_c^\infty(\Om)$ such that $0\leq \varphi \leq 1$ and $\varphi \equiv 1$ in $\Om' $ then by taking $\varphi$ as a test function in \eqref{kc-PS-bdd1}, we get the following estimate
\begin{equation*}
\begin{split}
&\int_{\Om^{'}}\left( \int_\Om \frac{F(y,u_k)}{|y|^\alpha |x-y|^\mu}dy\right)\frac{f(x,u_k)}{|x|^\alpha}~dx \leq \int_\Om \left( \int_\Om \frac{F(y,u_k)}{|y|^\alpha |x-y|^\mu}dy\right)\frac{f(x,u_k)\varphi}{|x|^\alpha}~dx\\
&\leq  \e_k \left\|\varphi\right\| + M(\|u_k\|^2) \int_\Om \nabla^m u_k. \nabla^m \varphi~dx \leq \e_k \|\varphi\|+ C \|u_k\| \|\varphi\|.
\end{split}
\end{equation*}
By using $\|u_k\|\leq C_0$ for all $k$, we obtain the sequence $\{w_k\}:=\left\{\left( \int_\Om\frac{F(y,u_k)}{|y|^\alpha |x-y|^\mu}dy\right)\frac{f(x,u_k)}{|x|^\alpha}\right\}$ is bounded in $L^1_{\text{loc}}(\Om)$ which implies that up to a subsequence, $w_k \to w$ in the ${weak}^*$-topology as $k \to \infty$, where $w$ denotes a Radon measure. So for any $\phi \in C_c^\infty(\Om)$ we get
\[\lim_{k \to \infty}\int_\Om \left( \int_\Om\frac{F(y,u_k)}{|y|^\alpha|x-y|^\mu}dy\right)\frac{f(x,u_k)}{|x|^\alpha}\phi ~dx = \int_\Om \phi ~dw,\; \forall \;\phi \in C_c^\infty(\Om). \]
Since $u_k$ satisfies \eqref{kc-PS-bdd1}, for any measurable set $E \subset \Om$ and $\phi \in C_c^\infty(\Om)$ such that supp $\phi\subset E$ we get that
\begin{align*}
w(E)&=\int_E \phi~ dw= \lim_{k \to \infty} \int_E\int_\Om \left( \frac{F(y,u_k)}{|y|^\alpha|x-y|^\mu}dy\right)\frac{f(x,u_k)}{|x|^\alpha}\phi ~dx \\
&= \lim_{k \to \infty} \int_\Omega\int_\Om \left( \frac{F(y,u_k)}{|y|^\alpha|x-y|^\mu}dy\right)\frac{f(x,u_k)}{|x|^\alpha}\phi ~dx = \lim_{k \to \infty} M(\|u_k\|^2)\int_\Om \nabla^m u_k . \nabla^m \phi ~dx\\
& \leq C_1\int_E \nabla^m u . \nabla^m \phi~dx
\end{align*}
where we used (m2) in the last inequality and  weak convergence of $u_k$ to $u$ in $W^{m,2}_0(\Om)$.
This implies that $w$ is absolutely continuous with respect to the Lebesgue measure. Thus, Radon-Nikodym theorem establishes that there exists a function $g \in L^1_{\text{loc}}(\Om)$ such that for any $\phi\in C^\infty_c(\Omega)$, $\int_\Om \phi~ dw= \int_\Om \phi g~dx$.
Therefore for any $\phi\in C^\infty_c(\Omega)$ we get
\[\lim_{k \to \infty}\int_\Om\left( \int_\Om \frac{F(y,u_k)}{|y|^\alpha|x-y|^\mu}dy\right)\frac{f(x,u_k)}{|x|^\alpha}\phi~ ~dx = \int_\Om \phi g~dx = \int_\Om  \left( \int_\Om \frac{F(y,u_0)}{|y|^\alpha|x-y|^\mu}dy\right)\frac{f(x,u_0)}{|x|^\alpha}\phi~ ~dx \]
which completes the proof.\hfill{\QED}
\end{proof}

\begin{Lemma}\label{PS-ws}
Let $\{u_k\}\subset W_0^{m, 2}(\Om)$ be a Palais Smale sequence of $\mathcal{J}$ at $c \in \mb R$ and $(h4)$ holds. Then there exists a $u \in W_0^{m, 2}(\Om)$ such that, up to a subsequence, $u_k \rightharpoonup u$ weakly in $W_0^{m, 2}(\Om)$ and
\begin{equation}\label{PS-wk0}
\left(\int_\Om \frac{F(y,u_k)}{|y|^\alpha|x-y|^{\mu}}dy\right)\frac{F(x,u_k)}{|x|^\alpha}  \to \left(\int_\Om \frac{F(y,u)}{|y|^\alpha |x-y|^{\mu}}dy\right)\frac{F(x,u)}{|x|^\alpha} \; \text{in}\; L^1(\Om)
\end{equation}
as $k \to \infty$. 
\end{Lemma}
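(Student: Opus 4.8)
The plan is to prove \eqref{PS-wk0} by the Vitali convergence theorem, establishing that the integrands
\[
G_k(x) := \left(\int_\Om \frac{F(y,u_k)}{|y|^\alpha|x-y|^{\mu}}dy\right)\frac{F(x,u_k)}{|x|^\alpha}
\]
converge a.e. to the corresponding limit $G$ (obtained by replacing $u_k$ with $u$) and are equi-integrable on $\Om$. From Lemma \ref{kc-PS-bdd} the sequence $\{u_k\}$ is bounded, so I first extract a subsequence with $u_k \rightharpoonup u$ in $W_0^{m,2}(\Om)$, $u_k \to u$ in every $L^q(\Om)$ and a.e. in $\Om$. Testing the Palais--Smale condition \eqref{kc-PS-bdd2} with $\phi=u_k$ and using (m1) together with the continuity of $M$ and the boundedness of $\{u_k\}$, I record the a priori bound
\[
\int_\Om \left(\int_\Om \frac{F(y,u_k)}{|y|^\alpha|x-y|^{\mu}}dy\right)\frac{f(x,u_k)u_k}{|x|^\alpha}~dx \leq C,
\]
which is the quantitative input driving the whole argument.

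The heart of the proof, and the step I expect to be the main obstacle, is the strong $L^1$ convergence of the inner Choquard potential $V_k(x):=\int_\Om F(y,u_k)|y|^{-\alpha}|x-y|^{-\mu}dy$, because for a general Palais--Smale level there is no reason for the single exponential term $F(\cdot,u_k)$ to be compact, and concentration of the Choquard mass would destroy the a.e. convergence of $G_k$. The key observation is that, since $\Om$ is bounded, $|x-y|\leq \mathrm{diam}\,\Om$ for all $x,y\in\Om$, whence $V_k$ enjoys the uniform lower bound $V_k(x)\geq c_0\,m_k$ with $m_k:=\int_\Om F(y,u_k)|y|^{-\alpha}dy$ and $c_0=(\mathrm{diam}\,\Om)^{-\mu}$. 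Plugging this into the a priori bound gives $c_0 m_k\int_\Om f(x,u_k)u_k|x|^{-\alpha}dx\leq C$. Thus either $m_k\to 0$ along the subsequence, in which case $F(\cdot,u_k)|\cdot|^{-\alpha}\to 0$ in $L^1(\Om)$ and, by Fatou, the limit $F(\cdot,u)|\cdot|^{-\alpha}$ vanishes as well; or $\liminf_k m_k>0$, and then the nonlocal weight can be stripped off to yield $\int_\Om f(x,u_k)u_k|x|^{-\alpha}dx\leq C'$. In the latter regime I would invoke a standard convergence lemma of de Figueiredo--Miyagaki--Ruf type (using $u_k\to u$ in $L^1$, the a.e. convergence $f(\cdot,u_k)\to f(\cdot,u)$, and the uniform bound just obtained) to conclude $f(\cdot,u_k)|\cdot|^{-\alpha}\to f(\cdot,u)|\cdot|^{-\alpha}$ in $L^1(\Om)$; condition (h4), which gives $F(x,t)\leq (T_0/T^{\gamma_0})f(x,t)$ for $t\geq T$, then yields $\int_\Om F(x,u_k)u_k|x|^{-\alpha}dx\leq C''$ and hence $F(\cdot,u_k)|\cdot|^{-\alpha}\to F(\cdot,u)|\cdot|^{-\alpha}$ in $L^1(\Om)$. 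In both cases convolution against the kernel $|x-y|^{-\mu}$ (integrable on the bounded $\Om$) is a bounded operator on $L^1$, so $V_k\to V$ in $L^1(\Om)$ and, after a further subsequence, a.e.; combined with $F(\cdot,u_k)\to F(\cdot,u)$ a.e. this gives $G_k\to G$ a.e.

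It remains to prove equi-integrability of $\{G_k\}$, where (h4) is used once more in its sharp form $u_k^{\gamma_0}F(x,u_k)\leq T_0 f(x,u_k)$. On the set $\{u_k\geq S\}$ it produces the tail estimate
\[
\int_{\{u_k\geq S\}} G_k~dx \leq \frac{T_0}{S^{1+\gamma_0}}\int_\Om \left(\int_\Om \frac{F(y,u_k)}{|y|^\alpha|x-y|^{\mu}}dy\right)\frac{f(x,u_k)u_k}{|x|^\alpha}~dx \leq \frac{T_0\,C}{S^{1+\gamma_0}},
\]
which tends to $0$ uniformly in $k$ as $S\to\infty$; on the complementary set $\{u_k<S\}$ the factor $F(x,u_k)$ is bounded by a constant depending only on $S$, and the remaining factor $V_k\,|x|^{-\alpha}$ is equi-integrable, since $V_k$ is bounded in $L^s(\Om)$ for some $s>1$ by Young's convolution inequality (the kernel lies in $L^s(\Om)$ for $s<n/\mu$ and the densities $F(\cdot,u_k)|\cdot|^{-\alpha}$ are bounded in $L^1$), while $|x|^{-\alpha}\in L^{s'}(\Om)$ for a conjugate exponent $s'$ that can be chosen compatibly because $\alpha+\mu<n$, so Hölder's inequality and absolute continuity of the integral close the estimate. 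The Vitali convergence theorem then gives \eqref{PS-wk0}. The delicate feature throughout is that no subcritical bound on $\|u_k\|$ is available, so the critical exponential factors cannot be tamed directly through Adams--Moser; it is precisely the nonlocal lower bound on $V_k$ together with (h4) that compensates for this and rules out concentration of the Choquard mass.
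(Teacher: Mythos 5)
Your proof is correct, but it takes a genuinely different route from the paper's. The paper, after recording the same a priori bounds \eqref{3.17new}, estimates the difference of the two double integrals by the cross terms $I_1+I_2$ and invokes the semigroup (Cauchy--Schwarz) property of the Riesz potential to dominate both by the single diagonal quantity $\left(\int_\Om\int_\Om\frac{(F(y,u_k)-F(y,u))(F(x,u_k)-F(x,u))}{|y|^\alpha|x|^\alpha|x-y|^\mu}\,dy\,dx\right)^{1/2}$; the proof that this quantity vanishes, and the upgrade from convergence of integrals to $L^1$ convergence of the integrand, are then outsourced to the proof of equation $(3.19)$ of Lemma $3.4$ in \cite{AGMS}. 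You instead work directly at the level of the integrand $G_k$: the kernel lower bound $|x-y|^{-\mu}\ge(\mathrm{diam}\,\Om)^{-\mu}$ (available because $\Om$ is bounded) yields the dichotomy on the Choquard mass $m_k$ and lets you strip the nonlocal factor off the Palais--Smale bound; a de Figueiredo--Miyagaki--Ruf argument together with (h4) then gives $L^1$ convergence of $F(\cdot,u_k)|\cdot|^{-\alpha}$; Young's convolution inequality transfers this to the potential $V_k$ in $L^1$ and gives the uniform $L^s$ bound for some $1<s<n/\mu$; and the (h4) tail estimate plus H\"older (using $\alpha<\min\{n/2,\,n-\mu\}$) yields equi-integrability, so Vitali concludes. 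The paper's route is shorter but not self-contained, since the real compactness work lives in \cite{AGMS}; yours is self-contained modulo the classical de Figueiredo--Miyagaki--Ruf lemma, proves the $L^1$ statement for the functions rather than only for the integrals, makes explicit where the singular weights and the restriction on $\alpha$ enter, and is in fact closer in spirit to the kernel-lower-bound compactness device of \cite{ACTY} than to the paper's own write-up.

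Two routine points you glossed over should be recorded in a full write-up, though neither affects validity. First, the de Figueiredo--Miyagaki--Ruf lemma must be applied with respect to the finite measure $|x|^{-\alpha}dx$, so you need $u_k\to u$ in $L^1(\Om,|x|^{-\alpha}dx)$; this follows from $u_k\to u$ in $L^2(\Om)$ together with $|x|^{-\alpha}\in L^2(\Om)$, which is precisely where $\alpha<n/2$ is used. Second, the uniform $L^1$ bound on the densities $F(\cdot,u_k)|\cdot|^{-\alpha}$ that feeds Young's inequality needs a justification valid in both branches of your dichotomy; it follows from the first bound in \eqref{3.17new} and the same kernel lower bound, which give $(\mathrm{diam}\,\Om)^{-\mu}\,m_k^2\le C$.
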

\begin{proof}
Let $\{u_k\}\subset W_0^{m, 2}(\Om)$ be a Palais Smale sequence of $\mathcal{J}$ at level $c$ then
 from Lemma \ref{kc-PS-bdd} we know that $\{u_k\}$  must be bounded in $W_0^{m, 2}(\Om)$. Thus there exists a $u\in W_0^{m, 2}(\Om)$ such that $u_k \rightharpoonup u$ weakly in $W_0^{m, 2}(\Om)$, $u_k \to u$ pointwise a.e. in {$\Om$} and $u_k \to u$ strongly in $L^q(\Om)$, for each $q \in [1,\infty)$ as $k \to \infty$. Also from \eqref{kc-PS-bdd1}, \eqref{kc-PS-bdd2} and \eqref{kc-PS-bdd3} we get that there exists a constant $C>0$ such that
\begin{equation}\label{3.17new}
\int_\Om \left(\int_\Om \frac{F(y,u_k)}{{{|y|^{\alpha}}}|x-y|^{\mu}}~dy\right)\frac{F(x,u_k)}{{{|x|^{\alpha}}}}~dx \leq C\;\;\text{and}\;\;
\int_\Om  \left(\int_\Om \frac{F(y,u_k)}{{{|y|^{\alpha}}}|x-y|^{\mu}}~dx\right)\frac{f(x,u_k)u_k}{{{|x|^{\alpha}}}}  \leq C.
\end{equation}
Consider
\begin{align*}
&\left|\int_\Om \left(\int_\Om \frac{F(y,u_k)}{{{|y|^{\alpha}}}|x-y|^{\mu}}~dy\right)\frac{F(x,u_k)}{{{|x|^{\alpha}}}}~dx- \int_\Om \left(\int_\Om\frac{F(y,u)}{{{|y|^{\alpha}}}|x-y|^{\mu}}~dy\right)\frac{F(x,u)}{{{|x|^{\alpha}}}}~dx  \right|\\
& \quad  \quad \leq \left|\int_\Om \left(\int_\Om\frac{F(y,u_k)-F(y,u)}{{{|y|^{\alpha}}}|x-y|^{\mu}}~dy\right)\frac{F(x,u_k)}{{{|y|^{\alpha}}}}~dx\right|\\
& \quad \quad \quad \quad  + \left| \int_\Om \left(\int_\Om\frac{F(y,u)}{{{|y|^{\alpha}}}|x-y|^{\mu}}~dy\right)\frac{F(x,u_k)-F(x,u)}{{{|x|^{\alpha}}}}~dx\right| \eqdef I_1 + I_2.
\end{align*}
Using the semigroup property of the Riesz potential we can write
\begin{align}\label{semi1}
I_1 &\leq \left(\int_\Om \left(\int_\Om\frac{F(y,u_k)-F(y,u)}{{{|y|^{\alpha}}}|x-y|^{\mu}}~dy\right)\frac{F(x,u_k)-F(x,u)}{{{|x|^{\alpha}}}}~dx\right)^\frac12 \nonumber \\
& \quad \quad \quad \quad \quad \quad \quad \quad \quad \quad \quad \quad \quad \quad \quad \times \left(\int_\Om \left(\int_\Om \frac{F(y,u_k)}{{{|y|^{\alpha}}}|x-y|^{\mu}}~dy\right)\frac{F(x,u_k)}{{{|x|^{\alpha}}}}~dx \right)^\frac12.
\end{align}
\begin{align}\label{semi2}
I_2 &\leq \left(\int_\Om \left(\int_\Om\frac{F(y,u_k)-F(y,u)}{{{|y|^{\alpha}}}|x-y|^{\mu}}~dy\right)\frac{F(x,u_k)-F(x,u)}{{{|x|^{\alpha}}}}~dx\right)^\frac12 \nonumber\\
& \quad \quad \quad \quad \quad \quad \quad \quad \quad \quad \quad \quad \quad \quad \times \left(\int_\Om \left(\int_\Om \frac{F(y,u)}{{{|y|^{\alpha}}}|x-y|^{\mu}}~dy\right)\frac{F(x,u)}{{{|y|^{\alpha}}}}~dx \right)^\frac12
\end{align}
Therefore, by using \eqref{semi1} and \eqref{semi2} we obtain,
\begin{align*}
I_1 +I_2 \leq  2C \left( \int_\Om\left(\int_\Om\frac{F(y,u_k)-F(y,u)}{{{|y|^{\alpha}}}|x-y|^{\mu}}~dy\right)\frac{F(x,u_k)-F(x,u)}{{{|x|^{\alpha}}}}~dx\right)^\frac12
\end{align*}
where we used \eqref{3.17new} to get the last inequality.
Now the proof of \eqref{PS-wk0} follows similarly as the proof of equation $(3.19)$ of Lemma $3.4$ in \cite{AGMS}). 
\hfill{\QED}
\end{proof}

\noi Now we define the associated Nehari manifold as
 \[\mc N = \{u \in W_0^{m, 2}(\Om)\setminus \{0\}:\; \langle \mathcal{J}^\prime(u),u \rangle=0\}\]
and $l^{**} = \inf_{u\in \mc N} \mathcal{J}(u)$.
\begin{Lemma}\label{comp-lem}
If $(m3)$ and $(h3)$ holds then $l^{*} \leq l^{**}$.
\end{Lemma}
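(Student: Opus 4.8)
The plan is to prove the pointwise comparison $l^*\le \mathcal J(u)$ for every $u\in\mathcal N$ and then pass to the infimum. The crucial step is to show that along each Nehari point the fibering map attains its maximum exactly at the base point, i.e. $\mathcal J(u)=\max_{t\ge 0}\mathcal J(tu)$ for all $u\in\mathcal N$; once this is available, I would convert the ray $t\mapsto tu$ into an admissible competitor in $\Gamma$ and read off the bound.

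To carry out the first step, fix $u\in\mathcal N$ and set $\gamma(t)=\mathcal J(tu)$ for $t\ge 0$. A direct differentiation gives
\[
\gamma'(t)= t\,M(t^2\|u\|^2)\|u\|^2 - \frac1t\int_\Om\left(\int_\Om \frac{F(y,tu)}{|y|^\alpha|x-y|^\mu}\,dy\right)\frac{f(x,tu)\,tu}{|x|^\alpha}\,dx,
\]
so that $\gamma'(1)=\langle\mathcal J'(u),u\rangle=0$ by the Nehari constraint. Using $(h1)$ the integrals are effectively taken over $\{u>0\}$, and there I would first record the elementary consequence of $(h3)$ that both $s\mapsto f(x,s)/s^{\ell}$ and $s\mapsto F(x,s)/s^{\ell+1}$ are increasing, the latter being equivalent to $sf(x,s)\ge(\ell+1)F(x,s)$, which integrates out of $(h3)$. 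Factoring out $t^{2\ell+1}$ and writing $\gamma'(t)=t^{2\ell+1}\Psi(t)$ with
\[
\Psi(t)=\|u\|^2\,\frac{M(t^2\|u\|^2)}{t^{2\ell}}-\int_\Om\left(\int_\Om \frac{u(y)^{\ell+1}}{|y|^\alpha|x-y|^\mu}\frac{F(y,tu)}{(tu)^{\ell+1}}\,dy\right)\frac{u(x)^{\ell+1}}{|x|^\alpha}\frac{f(x,tu)}{(tu)^{\ell}}\,dx,
\]
the two terms separate cleanly. By $(m3)$ the map $t\mapsto M(t^2\|u\|^2)/(t^2\|u\|^2)$ is non-increasing, and since $\ell>1$ the remaining factor $t^{2-2\ell}$ is strictly decreasing, so the first term of $\Psi$ is strictly decreasing; the second term is non-decreasing because it is the integral against the positive Riesz kernel of a product of the two nonnegative increasing functions above. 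Hence $\Psi$ is strictly decreasing with $\Psi(1)=0$, so $\gamma'>0$ on $(0,1)$ and $\gamma'<0$ on $(1,\infty)$, which yields $\mathcal J(u)=\gamma(1)=\max_{t\ge0}\mathcal J(tu)$. Note that $u\in\mathcal N$ forces $u^+\not\equiv 0$ (otherwise $\langle\mathcal J'(u),u\rangle\ge M_0\|u\|^2>0$), so the Choquard term is genuinely positive and the above comparison is nontrivial.

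For the second step, recall from Lemma \ref{lemma3.1} that $\mathcal J(tu)\to-\infty$ as $t\to\infty$; choose $T>1$ with $\mathcal J(Tu)<0$ and define $\vartheta(s)=sTu$ for $s\in[0,1]$. Then $\vartheta(0)=0$ and $\mathcal J(\vartheta(1))<0$, so $\vartheta\in\Gamma$, while $\max_{s\in[0,1]}\mathcal J(\vartheta(s))=\max_{\tau\in[0,T]}\mathcal J(\tau u)=\mathcal J(u)$ by the previous step, the maximum being attained at $\tau=1\le T$. Consequently $l^*\le\mathcal J(u)$ for every $u\in\mathcal N$, and taking the infimum over $\mathcal N$ gives $l^*\le l^{**}$. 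I expect the main obstacle to be the monotonicity analysis of $\Psi$, specifically verifying that the doubly nonlocal Choquard term is non-decreasing in $t$: this requires the auxiliary monotonicity of $F(x,s)/s^{\ell+1}$ extracted from $(h3)$ together with the sign of the Riesz kernel, and the precise interplay between $\ell>1$ and $(m3)$ that makes the Kirchhoff part strictly decreasing.
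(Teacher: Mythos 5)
Your proof is correct and takes essentially the same route as the paper: you show that for every $u\in\mathcal N$ the fibering map $t\mapsto\mathcal J(tu)$ attains its global maximum at $t=1$, and then use the ray through $u$ (rescaled so its endpoint has negative energy) as an admissible mountain-pass path to get $l^*\le\mathcal J(u)$, exactly as the paper does. The only difference is cosmetic: the paper factors the derivative by $t^{3}$ and uses the monotonicity of $t\mapsto tf(x,t)-2F(x,t)$ (hence of $F(x,tu)/t^{2}$ and $f(x,s)/s$), whereas you factor by $t^{2\ell+1}$ and use the monotonicity of $f(x,s)/s^{\ell}$ and $F(x,s)/s^{\ell+1}$ obtained from (h3) by integration.
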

\begin{proof}
For $u \in \mc N$, we define a map $h:(0,+\infty)\to \mb R$ such that $h(t)=\mathcal{J}(tu)$. Then
\[h^\prime(t)= M(\|tu\|^2)\|u\|^2 t - \int_\Om \left(\int_\Om \frac{F(y,tu)}{|y|^\alpha |x-y|^{\mu}}dy\right)\frac{f(x,tu)u}{|x|^\alpha}~dx.  \]
and since $u \in \mc N$, therefore
\begin{align*}
h^\prime(t) &= \|u\|^{4} t^3 \left( \frac{M(\|tu\|^2)}{t^2 \|u\|^2}- \frac{M(\|u\|^2)}{\|u\|^2}\right)\\
&\quad + t^3 \left[\int_\Om\int_\Om\left(  \frac{\frac{ F(y,u)f(x,u)}{u(x)}}{|y|^\alpha |x-y|^{\mu} }dy - \int_\Om \frac{\frac{F(y,tu)f(x,tu)}{t^3 u(x)}}{|y|^\alpha |x-y|^{\mu}}dy\right)\frac{u^{2}(x)}{|x|^\alpha}~dx\right].
\end{align*}
From (h3), we get
\begin{align*}
t_1f(x,t_1)- 2F(x,t_1) \leq t_1f(x,t_1)-2 F(x,t_2)+2 \frac{f(x,t_2)}{t_2}(t_2^2-t_1^2)\leq t_2f(x,t_2)-2 F(x,t_2).
\end{align*}
for $0<t_1<t_2.$ Using this we get that $tf(x,t)-2 F(x,t)\geq 0$ for $t\geq 0$ and for any $x \in \Omega$, $t\mapsto tf(x,t)-2 F(x,t)$ is increasing on $\R^+$, which further implies that $t \mapsto \frac{F(x,tu)}{t^{2}}$ is non-decreasing for $t>0$. Therefore for $0<t<1$ and $x \in \Om$, we get $\frac{F(x,tu)}{t^2} \leq F(x,u)$ and (h3) gives that $\frac{f(x,u)}u \geq \frac{f(x,tu)}{tu} $ then
\begin{align*}
h^\prime(t) &\geq \|u\|^{4}t^{3}\left( \frac{M(\|tu\|^2)}{\|tu\|^2}- \frac{M(\|u\|^2)}{\|u\|^2}\right)\\
&\quad+ t^{3}\left[\int_\Om \left(\int_\Om\left(F(y,u)- \frac{F(y,tu)}{t^2}\right)~\frac{dy}{{|y|^\alpha}|x-y|^{\mu}}\right){\frac{f(x,tu)u^{2}(x)}{|x|^\alpha tu(x)}} ~dx\right].
\end{align*}
This gives that $h^\prime(t)\geq 0$ for $0<t\leq1$ and similarly we can show that $h^\prime(t)<0$ for $t>1$. Hence $\mathcal{J}(u)= \max_{t\geq 0} \mathcal{J}(tu)$. Now we define $g:[0,1] \to W_0^{m, 2}(\Om)$ as $g(t)=(t_0u)t$ where $t_0>1$ is such that $\mathcal{J}(t_0u)<0$. So $g \in \Gamma$, where $\Gamma$ is as defined in the definition of $l^*$. Therefore,
\[l^* \leq \max_{t\in[0,1]}\mathcal{J}(g(t)) \leq \max_{t\geq 0} \mathcal{J}(tu)=\mathcal{J}(u). \]
and since $u \in \mc N$ is arbitrary, so we get $l^* \leq l^{**}$.\hfill{\QED}
\end{proof}

\noi Now we recall the following higher integrability Lemma from \cite{L}( also refer Lemma $1$ of \cite{LO}).
\begin{Lemma}\label{Lions-lem}
Let $\{v_k \} $ be a sequence in $W_0^{m, 2}(\Om)$  such that $\|v_k\|=1$ converging weakly to a non zero $v \in W_0^{m, 2}(\Om)$. Then for every $p< \frac{1}{1-\|v\|^2}$,
\[\sup_k \int_\Om {\exp \left( p \zeta_{m,2m} |v_k|^{2}\right)}<+\infty. \]
\end{Lemma}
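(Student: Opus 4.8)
The plan is to prove this Lions-type sharpening of the Adams inequality by combining the scaling invariance of Theorem \ref{TM-ineq} with a splitting of $v_k$ around its weak limit. First I would dispose of the degenerate case $\|v\|=1$: since $v_k\rightharpoonup v$ weakly in the Hilbert space $W_0^{m,2}(\Om)$ while $\|v_k\|=1=\|v\|$, the convergence is in fact strong, the threshold $p<\frac{1}{1-\|v\|^2}$ degenerates to $p<+\infty$, and the claim reduces directly to Theorem \ref{TM-ineq}. Hence I may assume $0<\|v\|<1$; note that the hypothesis $v\neq 0$ is precisely what makes the admissible range $p<\frac{1}{1-\|v\|^2}$ strictly exceed the Adams value $1$, so the statement is a genuine improvement.

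Set $z_k\eqdef v_k-v$. Since $v_k\rightharpoonup v$ we have $z_k\rightharpoonup 0$ and $\ld v,z_k\rd=\ld v,v_k\rd-\|v\|^2\to 0$; expanding $1=\|v_k\|^2=\|v\|^2+2\ld v,z_k\rd+\|z_k\|^2$ then gives $\|z_k\|^2\to 1-\|v\|^2$. The elementary bound $|v_k|^2=|z_k+v|^2\leq (1+\de)|z_k|^2+(1+\tfrac1\de)|v|^2$, valid for every $\de>0$, lets me factor
$$\exp\!\big(p\zeta_{m,2m}|v_k|^2\big)\leq \exp\!\big(p\zeta_{m,2m}(1+\de)|z_k|^2\big)\,\exp\!\big(p\zeta_{m,2m}(1+\tfrac1\de)|v|^2\big),$$
and applying H\"older's inequality with conjugate exponents $s,s'>1$ separates the $z_k$-part from the fixed $v$-part.

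The crux is the uniform bound on the $z_k$-factor. Writing $\tilde z_k\eqdef z_k/\|z_k\|$ with $\|\tilde z_k\|=1$, its exponent equals $c_k\,\zeta_{m,2m}|\tilde z_k|^2$ with $c_k\eqdef s\,p\,(1+\de)\|z_k\|^2$. Because $p(1-\|v\|^2)<1$, I can fix $s>1$ and $\de>0$ so small that $s\,p\,(1+\de)(1-\|v\|^2)<1$; then, as $\|z_k\|^2\to 1-\|v\|^2$, one has $c_k\leq 1$ for all large $k$, whence $\exp(c_k\zeta_{m,2m}|\tilde z_k|^2)\leq \exp(\zeta_{m,2m}|\tilde z_k|^2)$ and Theorem \ref{TM-ineq} (with the sharp constant $\zeta_{m,2m}$) furnishes a bound independent of $k$; the finitely many remaining indices give finite integrals, so the supremum is finite. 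The $v$-factor $\int_\Om \exp(s'p\zeta_{m,2m}(1+\tfrac1\de)|v|^2)\,dx$ is a single finite number: approximating $v$ by $\phi\in C_0^\infty(\Om)$ in norm and repeating the Young splitting $|v|^2\leq (1+\de')|v-\phi|^2+C_{\de'}|\phi|^2$ with $\|v-\phi\|$ taken small enough that the coefficient of $|v-\phi|^2$ times $\zeta_{m,2m}$ stays below $\zeta_{m,2m}$, Theorem \ref{TM-ineq} again yields integrability while the bounded function $\phi$ contributes a harmless constant.

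I expect the main obstacle to be the bookkeeping of the free parameters $s,\de$ (and $\de'$ in the $v$-estimate): they must be chosen simultaneously so that the strict inequality $p<\frac{1}{1-\|v\|^2}$ leaves exactly enough room for $s\,p\,(1+\de)(1-\|v\|^2)<1$ to hold while $\|z_k\|^2$ is still converging to its limit. Once these constants are pinned down, every remaining step is a direct invocation of Theorem \ref{TM-ineq} together with H\"older's inequality.
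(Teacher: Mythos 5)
Your proof is correct, but note that there is nothing in the paper to compare it against: the paper does not prove this lemma at all, it merely recalls it from Lions \cite{L} (see also Lemma 1 of \cite{LO}). Your argument — writing $v_k = v + z_k$, observing $\|z_k\|^2 \to 1-\|v\|^2$ from weak convergence, using $|v_k|^2 \leq (1+\de)|z_k|^2 + (1+\tfrac{1}{\de})|v|^2$ followed by H\"older, bounding the $z_k$-factor by Theorem \ref{TM-ineq} applied to $z_k/\|z_k\|$ once $s\,p\,(1+\de)\|z_k\|^2 \leq 1$ for large $k$, and disposing of the fixed $v$-factor via density of $C_0^\infty(\Om)$ and a second Young splitting — is exactly the standard Lions-type concentration argument that the cited sources carry out (in the first-order and polyharmonic settings respectively), so you have in effect reconstructed the literature proof rather than found a new route. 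One small inaccuracy: in the case $\|v\|=1$ you assert that strong convergence makes the claim reduce ``directly'' to Theorem \ref{TM-ineq}; it does not, since for $p>1$ the constant $p\,\zeta_{m,2m}$ exceeds the sharp Adams constant, so Adams cannot be applied to $v_k$ itself, and one still needs the splitting plus the fact that $\exp(c|v|^2)\in L^1(\Om)$ for every $c>0$. This is harmless rather than a gap, because your main argument nowhere uses $\|v\|<1$: when $\|v\|=1$ one simply has $\|z_k\|\to 0$ and $c_k\to 0$, so the case distinction could be deleted altogether. The same remark covers the finitely many exceptional indices $k$, whose integrals are finite by the same density argument you give for $v$.
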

\vspace{.2cm}
\noi \textbf{Proof of Theorem \ref{kc-mt-1}:} Let $\{u_k\}$ be a $(PS)_{l^*}$ sequence at the critical level $l^*$ and hence considered as a minimizing sequence associated to the variational problem \eqref{new2}. Then by Lemma \ref{PS-ws}, there exists a $u_0 \in W_0^{m, 2}(\Om)$ such that up to a subsequence
$u_k \rightharpoonup u_0$ weakly in $W_0^{m, 2}(\Om)$ as $k \to \infty$. 
First we claim that $u_0$ is non-trivial. So if $u_0 \equiv 0$ then using Lemma \ref{PS-ws}, we infer that
\[\int_\Om \left(\int_\Om \frac{ F(y,u_k)}{|y|^\alpha |x-y|^{\mu}}dy\right)\frac{F(x,u_k)}{|x|^\alpha} ~dx  \to 0\; \text{as}\; k \to \infty.\]
Therefore $ \lim_{k \to \infty} \mathcal{J}(u_k) = \frac{1}{2}\lim_{k\to\infty} \mathcal{M}(\|u_k\|^{2}) = l^*$ and then for large enough $k$ Theorem \ref{PS-level} gives
\[\mathcal{M}(\|u_k\|^2) < \mathcal{M} \left( \frac{2n-(2 \alpha +\mu)}{2n}{\zeta_{m,2m}}\right).\]
Then by monotonicity of {$\mathcal{M}$}, we obtain
\[\frac{2n}{2n-(2 \alpha +\mu)}\|u_k\|^{2} < \zeta_{m,2m}.  \]
Now, this implies that we can choose a  $q >\frac{2n}{2n-(2 \alpha +\mu)}$ such that $\sup_{k} \int_{\Omega} |{f(x,u_k)}|^q ~dx < +\infty$. Using  Proposition \ref{HLS}, Theorem \ref{TM-ineq} and the Vitali's convergence theorem we conclude that
\[\int_{\Om}\left( \int_\Om\frac{F(y,u_k)}{|y|^\alpha |x-y|^{\mu}}dy\right)\frac{f(x,u_k)u_k}{|x|^\alpha}~dx \to 0 \;\text{as}\; k \to \infty.\]
Hence $\lim_{k\to \infty}\langle \mathcal{J}^\prime(u_k),u_k \rangle=0$ which gives $\lim_{k\to \infty}M(\|u_k\|^2)\|u_k\|^2=0$. From (m1) we then obtain $\lim_{k \to \infty} \|u_k\|^2=0$. Thus using Lemma \ref{PS-ws}, it must be that $\lim_{k \to \infty} \mathcal{J}(u_k)=0 =l^*$ which contradicts $l^*>0$. Thus $u_0 \not \equiv 0$.
Now we show that $u_0\geq 0$ in $\Om$. From Lemma \ref{kc-PS-bdd} we know that $\{u_k\}$ must be bounded. Therefore there exists a constant $\rho >0$ such that up to a subsequence $\|u_k\| \to \rho$ as $k \to \infty$.  Let $ \varphi \in W_0^{m, 2}(\Om)$ then  by Lemma~\ref{kc-ws} we have
 $$\int_\Om \left(\int_\Om \frac{F(y,u_k)}{|y|^\alpha |x-y|^{\mu}}dy\right)\frac{f(x,u_k)}{|x|^\alpha} \varphi ~dx \to \int_\Om \left(\int_\Om \frac{F(y,u_0)}{|y|^\alpha|x-y|^{\mu}}dy\right)\frac{f(x,u_0)}{|x|^\alpha}\varphi~dx\;\text{as}\; k \to \infty.$$
 Since $\mathcal{J}^\prime(u_k) \to 0$ and $u_k \rightharpoonup  u_0$ weakly in $W^{m,2}_0(\Om)$, we get
 \begin{align*}
 M(\rho^2)\int_\Om \nabla^m u_0. \nabla^m \varphi~dx = \int_\Om \left(\int_\Om \frac{F(y,u_0)}{|y|^\alpha |x-y|^{\mu}}dy\right)\frac{f(x,u_0)}{|x|^\alpha}\varphi~dx,
 \end{align*}
 as $k \to \infty$. In particular, taking $\varphi = u_0^-$ in the above equation we get $M(\rho^2)\|u_0^-\|^2=0$ which implies together with assumption (m1) that $u_0^-=0$ a.e. in $\Om$. Therefore $u_0\geq 0$ a.e. in $\Om$.
 \\
\textbf{Claim (1):} $M(\|u_0\|^2)\|u_0\|^2 \geq \int_\Om \left(\int_\Om \frac{F(y,u_0)}{|y|^\alpha |x-y|^{\mu}}~dy \right)\frac{f(x,u_0)u_0}{|x|^\alpha} ~dx.$\\
Arguing by contradiction, suppose that
\[ M(\|u_0\|^2)\|u_0\|^2 <\int_\Om \left(\int_\Om \frac{F(y,u_0)}{|y|^\alpha |x-y|^{\mu}}~dy \right)\frac{f(x,u_0)u_0}{|x|^\alpha}~dx\]
which implies that $\langle \mathcal{J}^\prime(u_0),u_0 \rangle <0$. For $t>0$, using the map $t \mapsto tf(x,t)-2 F(x,t)$ is increasing on $\R^+$, we have
\begin{align*}
\langle \mathcal{J}^\prime(t u_0),u_0 \rangle  &\geq M(\| tu_0\|^2) { t \|u_0\|^{2}} -  \frac{1}{2}\int_\Om  \left(\int_\Om \frac{f(y,tu_0)tu_0(y)}{|y|^\alpha|x-y|^{\mu}}~dy \right)\frac{f(x,tu_0)u_0}{|x|^\alpha}~dx\\
 &\geq M_0 t \|u_0\|^2 - \frac{1}{2}\int_\Om \left(\int_\Om \frac{f(y,tu_0)tu_0(y)}{|y|^\alpha |x-y|^{\mu}}~dy \right)\frac{f(x,tu_0)u_0}{|x|^\alpha}~dx.
\end{align*}
Since (h3) gives that
\[\lim_{t\to 0^+} \frac{f(x,t)}{t^\gamma}=0 \; \text{uniformly in}\; x\in \Om,\; \text{for all}\; \gamma \in [0,1],\]
we can choose $t>0$ sufficiently small so that $\langle \mathcal{J}^\prime(tu_0),u_0 \rangle>0$. Thus there exists a $t_*\in (0,1)$ such that $\langle \mathcal{J}^\prime(t_*u_0),u_0 \rangle=0$ {\it i.e.} $t_*u_0 \in \mc N$. So using Lemma \ref{comp-lem} and $(m3)^\prime$ we get
\begin{align*}
l^* &\leq l^{**} \leq \mathcal{J}(t_*u_0) = \mathcal{J}(t_*u_0) - \frac{1}{4}\langle \mathcal{J}^\prime(t_*u_0),t_*u_0 \rangle\\
& = \frac{\mathcal{M}(\|t_*u_0\|^2)}{2} -\frac12 \int_\Om \left(\int_\Om \frac{ F(y,t_*u_0)}{|y|^\alpha |x-y|^{\mu}}dy\right)\frac{F(x,t_*u_0)}{|x|^\alpha}~dx \\
&\quad-\frac{1}{4}M(\|t_*u_0\|^2)\|t_*u_0\|^2 + \frac{1}{4}\int_\Om \left(\int_\Om \frac{F(y,t_*u_0)}{|y|^\alpha |x-y|^{\mu}}dy\right)\frac{f(x,t_*u_0)t_*u_0}{|x|^\alpha}~dx\\
& <  \frac{\mathcal{M}(\|u_0\|^2)}{2}  -\frac{1}{4} M(\|u_0\|^2)\|u_0\|^2 \\
&\quad + \frac{1}{4}\int_\Om \left(\int_\Om \frac{ F(y,u_0)}{|y^{\alpha}||x-y|^{\mu}}dy\right)\frac{f(x,u_0)u_0 -2F(x,u_0)}{|x|^\alpha}~dx\\
& = \liminf_{k \to \infty} \left( \mathcal{J}(u_k) - \frac{1}{4}\langle \mathcal{J}^\prime(u_k),u_k \rangle\right) = l^*.
\end{align*}
This gives a contradiction and hence {Claim (1)} holds. \\
\textbf{Claim (2):} $\mathcal{J}(u_0)= l^*$.\\
From Lemma \ref{PS-ws} we know that
\[\int_\Om \left(\int_\Om \frac{ F(y,u_k)}{|y|^\alpha |x-y|^{\mu}}dy\right)\frac{F(x,u_k)}{|x|^\alpha}~dx \to \int_\Om \left(\int_\Om \frac{ F(y,u_0)}{|y|^\alpha |x-y|^{\mu}}dy\right)\frac{F(x,u_0)}{|x|^\alpha}~dx \]
and by using the weakly lower semicontinuity of norms in $\lim_{k \to \infty}\mathcal{J}(u_k)= l^*$, we obtain $\mathcal{J}(u_0) \leq l^*$. If $\mathcal{J}(u_0)< l^*$ then it must be $\lim_{k \to \infty} \mathcal{M}(\|u_k\|^2) > \mathcal{M}(\|u_0\|^2)$ which implies that $\lim_{k \to \infty}\|u_k\|^2 > \|u_0\|^2$, since $\mathcal{M}$ is continuous and increasing. From this we get $\rho^2 > \|u_0\|^2.$ Moreover we have
\begin{equation}\label{kc-mt-12}
\mathcal{M}(\rho^2) =  \left( 2l^* + \int_\Om \left(\int_\Om \frac{ F(y,u_0)}{|y|^\alpha|x-y|^{\mu}}dy\right)\frac{F(x,u_0)}{|x|^\alpha}~dx\right).
\end{equation}
Now we define the sequence $v_k = \frac{u_k}{\|u_k\|}$ and $v_0 = \frac{u_0}{\rho}$ such that $v_k \rightharpoonup v_0$ weakly in $W_0^{m,2}(\Om)$ and $ \|v_0\|<1$. Then from Lemma \ref{Lions-lem} we obtain
\begin{equation}\label{kc-mt-13}
\sup_{ k \in \mb N} \int_\Om {\exp \left( p|v_k|^2  \right)}<+\infty,\; \text{for} \; p<\frac{\zeta_{m,2m}}{(1-\|v_0\|^2)}.
\end{equation}
Also from $(m3)^\prime$, Claim (1) and proof of Lemma \ref{comp-lem} we obtain
\begin{align*}
\mathcal{J}(u_0)& = \frac{1}{2}\mathcal{M}(\|u_0\|^2)- \frac{1}{4} M(\|u_0\|^2)\|u_0\|^2\\
&\quad+ \frac{1}{4}\int_\Om \left(\int_\Om \frac{ F(y,u_0)}{|y|^\alpha|x-y|^{\mu}}dy\right)\frac{(f(x,u_0)u_0-2 F(x,u_0))}{|x|^\alpha}~dx\geq 0.
\end{align*}
Using this with \eqref{kc-mt-12} and Theorem \ref{PS-level} we get that
\begin{align*}
\mathcal{M}(\rho^2) = 2 l^* - 2 \mathcal{J}(u_0) +\mathcal{M}(\|u_0\|^2) < \mathcal{M} \left( \frac{2n-(2 \alpha +\mu)}{2n}\zeta_{m,2m}\right) + \mathcal{M}(\|u_0\|^2)
\end{align*}
which implies together with (m1) that
\begin{equation*}
\rho^2 < \frac{\zeta_{m,2m}\left( \frac{2n-(2 \alpha +\mu)}{2n}\right)}{1-\|v_0\|^2}.
\end{equation*}
Thus it is possible to find a $\rho_* > 0$ such that for $k\in \mb N$ large enough
\[\|u_k\|^{2} < \rho_* < \frac{\zeta_{m,2m} \left(2n-(2 \alpha +\mu)\right)}{2n (1-\|v_0\|^2)}.\]
Then we choose a $q>1$ but close to $1$ such that
\[\frac{2n}{2n-(2 \alpha +\mu)} q \|u_k\|^{2} \leq \frac{2n}{2n-(2 \alpha +\mu)}\rho_* < \frac{\zeta_{m,2m}}{(1-\|v_0\|^2)}.
 \]
Therefore from \eqref{kc-mt-13} we conclude that
\begin{equation}\label{kc-mt-15}
 \int_\Om {\exp\left(\frac{2n}{2n-(2 \alpha +\mu)} q |u_k|^{2}\right)} \leq C
 \end{equation}
for some constant $C>0$. Using \eqref{kc-mt-15} and ideas similar as in Lemma \ref{PS-ws} we obtain
\[\int_\Om \left(\int_\Om \frac{ F(y,u_k)}{|y|^\alpha |x-y|^{\mu}}dy\right)\frac{f(x,u_k)u_k}{|x|^\alpha}~dx \to \int_\Om \left(\int_\Om \frac{ F(y,u_0)}{|y|^\alpha |x-y|^{\mu}}dy\right)\frac{f(x,u_0)u_0}{|x|^\alpha}~dx. \]
We conclude that $\|u_k\| \to \|u_0\|$ and we get a contradiction to the fact that $\mathcal{J}(u_0) < l^*$ . Hence $\mathcal{J}(u_0)= l^*= \lim_{k \to \infty} \mathcal{J}(u_k)$ and $\|u_k\| \to \rho$ implies $\rho= \|u_0\|$. Then finally we have,
\begin{equation*}
M(\|u_0\|^{2}) \int_\Om \nabla^m u_0. \nabla^m \varphi ~dx = \int_\Om \left(\int_{\Om}\frac{F(y,u_0)}{{ |y|^\alpha} |x-y|^{\mu}}dy\right)\frac{f(x,u_0)}{|x|^\alpha}\varphi ~dx.
\end{equation*}
for all $\varphi \in W^{m,2}_0(\Om)$ and which completes the proof of Theorem~\ref{kc-mt-1}.
 \hfill{\QED}


 \section{The problem $(\mathcal P_{\la,M})$}
In this section, we consider the problem $(\mathcal{P}_{\la, M})$ with Kirchhoff non-linearity of the form $M(t)=at+b$ where $a,b>0$. We observe that $\mathcal{J}_{\la, M}$ (defined in Section 1) is unbounded on $W_0^{m,2}(\Omega)$ but bounded below on suitable subsets of $W_0^{m,2}(\Omega)$. To show the existence of weak solutions to  $(\mathcal{P}_{\la, \mathcal{M}})$, we establish the existence of minimizers of $\mathcal{J}_{\la, M}$ under the natural constraint of the Nehari Manifold which contains every solution of $(\mathcal{P}_{\la, M}).$ So we define the Nehari manifold as
\begin{equation*}
N_{\la, M} := \{u \in W_0^{m,2}(\Omega)\setminus \{0\}|\  \langle \mathcal{J}^{'}_{\la,M} (u),u \rangle =0 \}
\end{equation*}
where $\langle . \ , . \rangle$ denotes the duality between $W_0^{m,2}(\Omega)$ and $W^{-m,2}(\Omega)$ {\it i.e.} $u \in N_{\la, M}$ if and only if
\begin{equation}\label{ndef}
\|u\|^2 \;  M(\|u\|^2)- \la \int_{\Omega} h(x) |u|^{q+1} ~dx - \int_{\Omega} \left(\int_{\Omega} \frac{F(u)}{|x-y|^\mu |y|^\alpha}~dy\right) \frac{f(u) u}{|x|^\alpha} ~dx =0.\end{equation}
For $u \in W_0^{m,2}(\Omega)$, we define the fiber map $\Phi_{u,M}$ introduced by Drabek and Pohozaev in \cite{DP} as $\Phi_{u,M} : \mathbb{R}^{+} \rightarrow \mathbb{R}$ such that $\Phi_{u,M} (t)= \mathcal{J}_{\la, M}(tu)$. Thus we get
\begin{equation*}
\Phi^{'}_{u,M} (t) = t \|u\|^2 M(\|tu\|^2) - \la t^{q} \int_{\Omega} h(x) |u|^{q+1}~dx - \int_{\Omega} \left( \int_{\Omega} \frac{F(tu)}{|x-y|^\mu |y|^\alpha} ~dy \right) \frac{f(tu) u}{|x|^\alpha} ~dx
\end{equation*}
and
\begin{align*}
      \Phi^{''}_{u,M} (t)&= 2t^2\|u\|^{4} M'(\|tu\|^2)+  \|u\|^2 M(\|tu\|
      ^2) - \la q t^{q-1} \int_{\Omega} h(x) |u|^{q+1}~dx \\
      &- \int_{\Omega} \left( \int_{\Omega} \frac{f(tu) u }{|x-y|^\mu |y|^\alpha} ~dy \right) \frac{f(tu) u}{|x|^\alpha} ~dx - \int_{\Omega} \left( \int_{\Omega} \frac{F(tu)}{|x-y|^\mu |y|^\alpha} ~dy \right) \frac{f'(tu) u^2}{|x|^\alpha} ~dx.
 \end{align*}
Since the fiber map introduced above are closely related to Nehari manifold by the relation $tu \in N_{\la, M}$ iff $\Phi_{u,M}^{'}(t)=0$, so we analyze the geometry of the energy functional on the following components of the Nehari Manifold:
\begin{equation*}
N^+_{\la, M}:= \{u \in N_{\la,M} : \Phi_{u,M}^{''}(1)>0\} =\{tu \in W_0^{m,2}(\Omega)\setminus\{0\} : \Phi_{u,M}^{'}(t)=0, \Phi_{u,M}^{''}(t)>0\},
\end{equation*}
\begin{equation*}
N^-_{\la, M}:= \{u \in N_{\la,M} : \Phi_{u,M}^{''}(1)<0\} =\{tu \in W_0^{m,2}(\Omega)\setminus\{0\} : \Phi_{u,M}^{'}(t)=0, \Phi_{u,M}^{''}(t)<0\},
\end{equation*}
\begin{equation*}
N^0_{\la, M}:= \{u \in N_{\la,M} : \Phi_{u,M}^{''}(1)=0\} =\{tu \in W_0^{m,2}(\Omega)\setminus\{0\} : \Phi_{u,M}^{'}(t)=0, \Phi_{u,M}^{''}(t)=0\}.
\end{equation*}
Due to presence of sign changing non-linearity in $(\mathcal{P}_{\la,M})$, we also decompose $W_0^{m ,2}(\Omega)
$ into the following sets to study the behavior of fibering maps $\Phi_{u,M}$. We define $H(u)=\int_{\Omega} {h(x)}|u|^{q+1}~dx$ and
\begin{equation*}
H^{+}:= \{u \in W_0^{m,2}(\Omega) : H(u) >0 \},
\end{equation*}
\begin{equation*}
H^{-}_0:= \{u \in W_0^{m,2}(\Omega) : H(u) \leq 0 \}.
\end{equation*}
\subsection{Fiber Map Analysis}
In this section, we study the geometry of $\mc J_{\la,M}$ on the Nehari manifold. We split the study according to the  decomposition of $N_{\la,M}$ and the sign of $H(u)$.
\noindent Define $\psi : \mathbb{R}^+ \rightarrow \mathbb{R}$ such that
\begin{equation*}
\psi_u(t)= t^{1-q} M(\|tu\|^2) \|u\|^2 - t^{-q} \int_{\Omega} \left( \int_{\Omega} \frac{F(tu)}{|x-y|^\mu |y|^\alpha} ~dy \right) \frac{f(tu) u}{|x|^\alpha} ~dx.
\end{equation*}
\textbf{Case 1}: $ u \in H^-_0\setminus\{0\}$\\
Since
\begin{align*}
\Phi^{'}_{u,M}(t)
&= t^q (\psi_u(t) - \la \int_{\Omega} h(x) |u|^{q+1}~dx),
\end{align*}
so $tu \in N_{\la,M}$ iff $t>0$ is a solution of $ \psi_u(t)= \la \int_{\Omega} h(x) |u|^{q+1}dx.$ We have
\begin{equation}\label{KC21}
\begin{split}
\psi^{'}_u(t)&= \left(1-q\right) t^{-q} M(\|tu\|^2) \|u\|^2 + 2  t^{2-q} M{'}(\|tu\|^2) \|u\|^{4}\\
 &+ \frac{q}{t^{q+1}} \int_{\Omega} \left( \int_{\Omega} \frac{F(tu)}{|x-y|^\mu |y|^\alpha} ~dy \right) \frac{f(tu) u}{|x|^\alpha} ~dx - t^{-q} \bigg[ \int_{\Omega} \left( \int_{\Omega} \frac{f(tu) u}{|x-y|^\mu |y|^\alpha} ~dy \right) \frac{f(tu) u}{|x|^\alpha} ~dx \\
 &+ \int_{\Omega} \left( \int_{\Omega} \frac{F(tu)}{|x-y|^\mu |y|^\alpha} ~dy \right) \frac{f'(tu) u^2}{|x|^\alpha} ~dx \bigg].
\end{split}
\end{equation}
Due to the presence of exponential non-linearity, for large $t$ we have $\psi_u^{'}(t) <0$ and since $u \in H^-_0$, there exists a unique $t^*>0$ such that $\psi_u(t^*)= \la \int_{\Omega} h(x) |u|^{q+1}dx$, {\it i.e.} $t^*u \in N_{\la, M}$.\\
Suppose there exists an another point $t_1$ ($t^*< t_1$) such that $\psi_u(t_1)= \la \int_{\Omega} h(x) |u|^{q+1} \leq 0$, {\it i.e.}
 \begin{equation}\label{role}
 t_1^{1-q} (a t_1^2\|u\|^2 +b) \|u\|^2 \leq t_1^{-q} \int_{\Omega} \left( \int_{\Omega} \frac{F(t_1u)}{|x-y|^\mu |y|^\alpha} ~dy \right) \frac{f(t_1u) u}{|x|^\alpha} ~dx
 \end{equation}
 and $\psi_u'(t_1) \geq 0$. Then from \eqref{role} and by using $f'(t_1u) t_1u > (p+1) f(t_1u)$, $f(t) t \geq (p+2) F(t)$ we obtain,
    {\begin{align*}
\psi'_u(t_1)& <\left(3-q\right)\left[t_1^{-q} (a t_1^2 \|u\|^2 +b) \|u\|^2- t_1^{-q-1} \int_{\Omega} \left( \int_{\Omega} \frac{F(t_1u)}{|x-y|^\mu |y|^\alpha} ~dy \right) \frac{f(t_1u) u}{|x|^\alpha} ~dx \right] \leq 0.
  \end{align*}}
which is  a contradiction. Also for $0<t<t^*$, $\Phi^{'}_{u,M}(t)= t^q(\psi_u(t)-\la \int_{\Omega} h(x) |u|^{q+1} ~dx) > 0$. Consequently, $\Phi_{u,M}$ is increasing in $(0,t^*)$ and also decreasing on $(t^*, \infty).$ Therefore $t^*$ is unique critical point of $\Phi_{u, M}$ which is also a point of global maximum. Furthermore, since $\psi'_u(t) = \displaystyle\frac{{\left(t \Phi''_{u,M}(t)-q \Phi'_{u,M}(t)\right)}}{t^q}$, therefore $t^* u \in N^-_{\la,M}.$\\ \\
\textbf{Case 2}: $ u \in H^+$\\
In this case, we establish that there exists a $\la_0 >0$ and a $t_*>0$ such that for $\la \in (0, \la_0)$, $\Phi_{u}$ has exactly two critical points $t_1(u)$ and $t_2(u)$ such that $t_1(u) < t_*(u) <t_2(u)$ where $t_1(u)$ is local minimum point and $t_2(u)$ is local maximum point. To prove this, we require further analysis and a priori estimates.
\subsection{Preliminary Results for Case-2}
For small $t>0,$ $\psi_u(t)>0$ and $\psi_u(t) \rightarrow -\infty$ as $ t \rightarrow \infty$ for $u \in H^+$. Then there exists at least one point $t^*$ such that $\psi_u^{'}(t_*)=0$, {\it i.e.}
 \begin{align*}
 &\left(3-q\right) t_*^{2-q} a \|u\|^4 +\left(1-q\right) t_*^{-q} b \|u\|^2 + \dfrac{q}{t_*^{q+1}} \int_{\Omega} \left( \int_{\Omega} \frac{F(t_*u)}{|x-y|^\mu |y|^\alpha} ~dy \right) \frac{f(t_*u) u}{|x|^\alpha} ~dx\\
 &=t_*^{-q}\bigg[\int_{\Omega} \left( \int_{\Omega} \frac{F(t_*u)}{|x-y|^\mu |y|^\alpha} ~dy \right) \frac{f'(t_*u) u^2}{|x|^\alpha} ~dx +\int_{\Omega} \left( \int_{\Omega} \frac{f(t_*u)u }{|x-y|^\mu |y|^\alpha} ~dy \right) \frac{f(t_*u) u}{|x|^\alpha} ~dx\bigg].
\end{align*}
So by AM-GM inequality we obtain
$2 \sqrt{\left(3-q\right)a b \left(1-q\right)} \|t_*u\|^{3} \leq B(t_*u)$
where $$B(u)=\int_{\Omega} \left( \int_{\Omega} \frac{F(u)}{|x-y|^\mu |y|^\alpha} ~dy \right) \frac{f'(u) u^2}{|x|^\alpha} ~dx +\int_{\Omega} \left( \int_{\Omega} \frac{f(u) u }{|x-y|^\mu |y|^\alpha} ~dy \right) \frac{f(u) u}{|x|^\alpha} ~dx.$$
Using $\psi^{'}_u(t_*)=0$, we replace the value of $a\|t_*u\|^{4}$ in the definition of $\psi_u(t)$ to obtain \\
\begin{equation}\label{KC1}
\psi_u(t_*)= \frac{1}{\left(3-q\right) t_*^{q+1}} \bigg[ B(t_*u)-3 \int_{\Omega} \left( \int_{\Omega} \frac{F(t_*u)}{|x-y|^\mu |y|^\alpha} ~dy \right) \frac{f(t_*u) t_*u}{|x|^\alpha} ~dx + 2 b \|t_* u\|^2  \bigg].
\end{equation}
 \begin{Lemma}\label{inf}
 Let $$\Gamma := \left\{ u \in W_0^{m,2}(\Omega): \|u\|^{3} \leq \frac{B(u)}{2 \sqrt{\left(3-q\right)a b \left(1-q\right)}}\right\}.$$
  Then there exists a $\la_0>0$ such that for every $ \la \in (0, \la_0)$, $\Gamma_0 >0$ holds where
\begin{equation*}\label{imp}
\Gamma_0 := \inf_{u \in \Gamma\backslash \{0\} \cap H^+}\bigg[ B(u)- 3 \int_{\Omega} \left( \int_{\Omega} \frac{F(u)}{|x-y|^\mu |y|^\alpha} ~dy \right) \frac{f(u) u}{|x|^\alpha} ~dx + 2 b \|u\|^2 - \la \left(3-q\right) H(u) \bigg].
\end{equation*}
\end{Lemma}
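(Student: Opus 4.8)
The plan is to drop the first two terms of the bracket defining $\Gamma_0$ by showing they are nonnegative, to use the defining inequality of $\Gamma$ to force a uniform lower bound $\|u\|\ge\delta>0$, and then to absorb the $\lambda H(u)$ term into $2b\|u\|^2$ for $\lambda$ small. First I would record the two pointwise inequalities for the explicit nonlinearity $f(s)=s|s|^p\exp(|s|^\gamma)$, namely $f'(s)s^2\ge (p+1)f(s)s$ and $f(s)s\ge (p+2)F(s)$ for all $s\in\mathbb R$ (both obtained by differentiation, the surplus being a nonnegative multiple of $\gamma$). Setting $C(u):=\int_{\Omega}\left(\int_{\Omega}\frac{F(u)}{|x-y|^{\mu}|y|^{\alpha}}\,dy\right)\frac{f(u)u}{|x|^{\alpha}}\,dx\ge 0$ and inserting these inequalities into the two Choquard double integrals making up $B(u)$ gives $B(u)\ge (p+1)C(u)+(p+2)C(u)=(2p+3)C(u)$, hence
\[
B(u) - 3\int_{\Omega}\left(\int_{\Omega} \frac{F(u)}{|x-y|^{\mu}|y|^{\alpha}}\,dy\right)\frac{f(u)u}{|x|^{\alpha}}\,dx = B(u)-3C(u)\ge 2p\,C(u)\ge 0.
\]

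The crucial step is to produce $\delta>0$ with $\|u\|\ge\delta$ for all $u\in\Gamma\setminus\{0\}$. For $\|u\|\le 1$ I would estimate $B(u)$ from above: each of its two summands has the form $\int_{\Omega}\!\int_{\Omega}\frac{a(y)b(x)}{|x-y|^{\mu}|y|^{\alpha}|x|^{\alpha}}\,dx\,dy$ with $a,b\ge0$, so Proposition \ref{HLS} (applied with $t=r=\frac{2n}{2n-(2\alpha+\mu)}$ and $\beta=\alpha$) bounds it by $C\|a\|_{L^{t}}\|b\|_{L^{t}}$, giving $C\|F(u)\|_{L^{t}}\|f'(u)u^{2}\|_{L^{t}}$ and $C\|f(u)u\|_{L^{t}}^{2}$ respectively. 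Every factor is of the type (polynomial in $|u|$)$\times\exp(|u|^{\gamma})$; I split the exponential off by H\"older and control it via Theorem \ref{TM-ineq}: with $v=u/\|u\|$ one has $\exp(c|u|^{\gamma})=\exp(c\|u\|^{\gamma}|v|^{\gamma})$, and since $\gamma<\frac{n}{n-m}=2$ we may use $|v|^{\gamma}\le 1+|v|^{2}$ to get $\exp(c\|u\|^{\gamma}|v|^{\gamma})\le e^{c\|u\|^{\gamma}}\exp(c\|u\|^{\gamma}|v|^{2})$, whose integral is uniformly bounded by Theorem \ref{TM-ineq} once $c\|u\|^{\gamma}\le\zeta_{m,2m}$, which holds for $\|u\|$ small. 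The polynomial factors are controlled by the Sobolev embedding $W_0^{m,2}(\Omega)\hookrightarrow L^{s}(\Omega)$ ($s<\infty$), so that $\|f(u)u\|_{L^{t}}\le C\|u\|^{p+2}$ and likewise for the other factors, yielding $B(u)\le C\|u\|^{2(p+2)}$ for $\|u\|$ small. Since $u\in\Gamma$ means $2\sqrt{(3-q)ab(1-q)}\,\|u\|^{3}\le B(u)$, this forces $2\sqrt{(3-q)ab(1-q)}\le C\|u\|^{2p+1}$, and as $2(p+2)-3=2p+1>0$ we conclude $\|u\|\ge\delta$ for a fixed $\delta>0$ on all of $\Gamma\setminus\{0\}$.

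Finally I would estimate the last term by $H(u)=\int_{\Omega}h|u|^{q+1}\,dx\le\|h\|_{L^{r}}\|u\|_{L^{(q+1)r'}}^{q+1}\le C\|u\|^{q+1}$ (H\"older with $r=\frac{p+2}{q+1}$, then Sobolev). Combining with $B(u)-3C(u)\ge0$ and $\|u\|\ge\delta$, for every $u\in\Gamma\setminus\{0\}\cap H^{+}$,
\[
B(u)-3C(u)+2b\|u\|^{2}-\lambda(3-q)H(u)\ge \|u\|^{q+1}\left(2b\|u\|^{1-q}-\lambda(3-q)C\right)\ge \delta^{q+1}\left(2b\delta^{1-q}-\lambda(3-q)C\right),
\]
where I used $1-q>0$. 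Taking $\lambda_0:=\frac{2b\delta^{1-q}}{(3-q)C}$ makes the right-hand side strictly positive for all $\lambda\in(0,\lambda_0)$, whence $\Gamma_0>0$. The main obstacle is the uniform upper bound on $B(u)$ in the second step, where the doubly weighted Hardy--Littlewood--Sobolev inequality and the borderline Adams--Moser control of the exponential nonlinearity (available precisely because $\gamma<2$) must be combined with care.
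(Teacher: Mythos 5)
Your proposal is correct and follows essentially the same route as the paper: a uniform positive lower bound for $\|u\|$ on $\Gamma\setminus\{0\}$ obtained by bounding $B(u)\leq C\|u\|^{2(p+2)}$ via Proposition \ref{HLS}, H\"older, Theorem \ref{TM-ineq} (exploiting $\gamma<2$) and Sobolev embedding, followed by the absorption of $\la(3-q)H(u)\leq \la(3-q)C\|u\|^{q+1}$ into $2b\|u\|^2$ for $\la$ below an explicit $\la_0$. The only cosmetic differences are that you argue the lower bound on $\|u\|$ directly rather than by contradiction along a sequence, and you deduce $B(u)-3C(u)\geq 2p\,C(u)\geq 0$ by combining the two pointwise inequalities $f'(s)s^2\geq(p+1)f(s)s$ and $f(s)s\geq(p+2)F(s)$, whereas the paper uses the pointwise identity $f'(u)u^2-3f(u)u=(p-2+\gamma|u|^\gamma)f(u)u\geq 0$ (which needs $p>2$); both are valid under the paper's hypotheses.
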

\begin{proof}
We establish the proof through various steps.\\
\textbf{Step 1}:\ Claim: $\inf_{u \in \Gamma\backslash\{0\} \cap H^+} \|u\|>0.$\\
We argue with contradiction, suppose there exists a sequence $\{u_k\} \subset \Gamma\backslash \{0\} \cap H^+$ such that $\|u_k\| \rightarrow 0$.
 Then using Proposition \ref{HLS} and putting the value of $f(u)= u |u|^p \exp(|u|^{\gamma})$ as well as $f^{'}(u)= ((p+1)+\gamma |u|^{\gamma})|u|^p \exp(|u|^{\gamma})$ we obtain
\begin{align*}
 &|B(u_k)|=\left|\int_{\Omega} \left( \int_{\Omega} \frac{F(u_k)}{|x-y|^\mu |y|^\alpha} ~dy \right) \frac{f'(u_k) u_k^2}{|x|^\alpha} ~dx +\int_{\Omega} \left( \int_{\Omega} \frac{f(u_k) u_k }{|x-y|^\mu |y|^\alpha} ~dy \right) \frac{f(u_k) u_k}{|x|^\alpha} ~dx\right|\\
& \leq C_1 \bigg(\int_{\Omega} (|u_k|^{p+2} \exp(|u_k|^{\gamma}))^{\frac{2n}{2n-(2 \alpha +\mu)}}~dx\bigg)^{\frac{2n-(2 \alpha +\mu)}{n}} +  C_2 \left(\int_{\Omega} (F(u_k))^{\frac{2n}{2n-(2 \alpha +\mu)}}~dx
 \right)^{\frac{2n-(2 \alpha +\mu)}{2n}} \\
 & \quad \times \left(\int_{\Omega}(((p+1)+\gamma |u_k|^{\gamma})|u_k|^{p+2}\exp(|u_k|^{\gamma}))^{\frac{2n}{2n-(2 \alpha +\mu)}}~dx\right)^{\frac{2n-(2 \alpha +\mu)}{2n}},
 \end{align*}
where $C_1,C_2$ are positive constants independent of $u_k$.
Now $(p+2)F(t) \leq tf(t)$ and H\"older's inequality implies that
 \begin{align*}
 |B(&u_k)|\leq C_1\bigg(\int_{\Omega} |u_k|^{\frac{2n \delta'(p+2)}{2n-(2\alpha  +\mu)}}~dx\bigg)^{\frac{2n-(2\alpha +\mu)}{n\delta'}}\times \bigg(\int_{\Omega} \exp\bigg(|u_k|^{\gamma}\frac{2n \delta}{2n-(2\alpha+\mu)}\bigg)~dx\bigg)^{\frac{2n-(2\alpha +\mu)}{n\delta}}\\
 &+  C_2 \bigg(\int_{\Omega} |u_k|^{\frac{2n \delta'(p+2)}{2n-(2\alpha +\mu)}}~dx\bigg)^{\frac{2n-(2\alpha +\mu)}{2n\delta'}}\times \bigg(\int_{\Omega} \exp\bigg(|u_k|^{\gamma} \frac{2n \delta}{2n-(2\alpha +\mu)}\bigg)~dx\bigg)^{\frac{2n-(2\alpha+\mu)}{2n\delta}} \times\\
 &\left[\bigg(\int_{\Omega} |u_k|^{\frac{2n\delta'(p+2)}{2n-(2\alpha  +\mu)}}~dx \bigg)^{\frac{2n-(2\alpha +\mu)}{2n\delta'}} \times \bigg(\int_{\Omega} \exp\bigg(|u_k|^{\gamma} \frac{2n \delta}{2n-(2\alpha +\mu)}\bigg)~dx\bigg)^{\frac{2n-(2\alpha +\mu)}{2n\delta}}\right. \\
 &+ \left.\bigg(\int_{\Omega} |u_k|^{\frac{2n\delta'(p+\gamma+2)}{2n-(2\alpha +\mu)}}~dx\bigg)^{\frac{2n-(2\alpha+\mu)}{2n\delta'}} \times \bigg(\int_{\Omega} \exp\bigg(|u_k|^{\gamma} \frac{2n \delta}{2n-(2\alpha +\mu)}\bigg)~dx\bigg)^{\frac{2n-(2\alpha +\mu)}{2n\delta}}\right],
 \end{align*}
where $\delta >1$ (which depends on $k$) and $\delta^\prime$ denotes its H\"{o}lder conjugate. Using Moser-Trudinger inequality for $u_k$ with large enough $k$ such that $\frac{2n \delta} {(2n-(2\alpha+\mu))}\|u_k\|^\gamma \leq\zeta_{m,2m} $ (such $k$ can be chosen because $\|u_k\| \to 0$ as $k \to \infty$) and $v_k= \frac{u_k}{||u_k||}$, we obtain
 \begin{align*}
 |B(u_k)|&\leq C_1\bigg(\int_{\Omega} |u_k|^{\frac{2n \delta'(p+2)}{2n-(2\alpha+\mu)}}~dx\bigg)^{\frac{2n-(2\alpha  +\mu)}{n\delta'}} \times \bigg(\sup_{\|v_k\| \leq 1}\int_{\Omega} \exp(|v_k|^{\gamma}\zeta_{m,2m})~dx \bigg)^{\frac{2n-(2\alpha  +\mu)}{n\delta}}\\
 &+  C_2 \bigg(\int_{\Omega} |u_k|^{\frac{2n \delta'(p+2)}{2n-(2\alpha  +\mu)}}~dx\bigg)^{\frac{2n-(2\alpha +\mu)}{2n\delta'}} \times\bigg(\sup_{\|v_k\| \leq 1}\int_{\Omega} \exp(|v_k|^{\gamma}\zeta_{m,2m})~dx \bigg)^{\frac{2n-(2\alpha  +\mu)}{n\delta}} \times\\
 &\left[\bigg(\int_{\Omega} |u_k|^{\frac{2n\delta'(p+2)}{2n-(2\alpha +\mu)}}~dx\bigg)^{\frac{2n-(2\alpha +\mu)}{2n\delta'}} + \bigg(\int_{\Omega} |u_k|^{\frac{2n\delta'(p+\gamma+2)}{2n-(2\alpha+\mu)}}~dx\bigg)^{\frac{2n-(2\alpha +\mu)}{2n\delta'}}\right].
 \end{align*}
Finally the Sobolev embedding gives the following upper bound.
 \begin{align*}
 |B(u_k)|&\leq C_3( \|u_k\|^{2(p+2)} +\|u_k\|^{(p+2)}(\|u_k\|^{(p+2)}+\|u_k\|^{(p+\gamma+2)}))\leq C \|u_k\|^{(2p+4)}+ \|u_k\|^{(2p+{\frac{\gamma}{2}}+4)}.
 \end{align*}
Using $u_k \in \Gamma\backslash\{0\}$   we get
$1 \leq C (\|u_k\|^{(2p+1)}+\|u_k\|^{(2p+{\frac{\gamma}{2}}+1)}$, which is a contradiction as $\|u_k\| \to 0$ as $k \to \infty$. Therefore we have $\inf_{u \in \Gamma\backslash\{0\} \cap H^+} \|u\|>0.$\\
\textbf{Step 2}: Claim: $0< \inf_{u \in \Gamma\backslash\{0\}\cap H^+} \left\{\displaystyle\int_{\Omega} \int_{\Omega}\left(\frac{f(u)u}{|x-y|^\mu |y|^\alpha}dy\right)\left(p-2+\gamma |u|^{\gamma}\right)\frac{\exp(|u|^\gamma) |u|^{p+2}}{|x|^\alpha}~dx \right\} $.\\
Since $F(s) \leq \frac{f(s)s}{p+2}$, then by the definition of $\Gamma$ and from Step 1, we obtain $0< \inf_{u \in \Gamma\backslash\{0\}\cap H^+} B(u)$ {\it i.e.}
\begin{align*}
0&< \inf_{u \in \Gamma\backslash\{0\}\cap H^+} \left\{\int_{\Omega} \left( \int_{\Omega} \frac{F(u)}{|x-y|^\mu |y|^\alpha} ~dy \right) \frac{f'(u) u^2}{|x|^\alpha} ~dx +\int_{\Omega} \left( \int_{\Omega} \frac{f(u) u }{|x-y|^\mu |y|^\alpha} ~dy \right) \frac{f(u) u}{|x|^\alpha} ~dx\right\}\\
&\leq \inf_{u \in \Gamma\backslash\{0\}\cap H^+} \left\{\int_{\Omega} \left(\int_{\Omega} \frac{f(u)u}{|x-y|^\mu |y|^\alpha} ~dy\right)\frac{f(u)u+ f'(u)\frac{u^2}{p+2}}{|x|^\alpha} ~dx\right\}\\
&=\inf_{u \in \Gamma\backslash\{0\}\cap H^+} \left\{\int_{\Omega} \left(\int_{\Omega} \frac{f(u)u}{|x-y|^\mu |y|^\alpha} ~dy\right) \frac{|u|^{p+2} exp(|u|^\gamma)}{|x|^\alpha}\bigg(1+ \frac{(p+1)+\gamma |u|^{\gamma}}{p+2}\bigg)\right\}.
\end{align*}
Since $p>2$, we infer $$0< \inf_{u \in \Gamma\backslash\{0\}\cap H^+} \left\{\displaystyle\int_{\Omega} \int_{\Omega}\left(\frac{f(u)u}{|x-y|^\mu |y|^\alpha}\right)\left(p-2+\gamma |u|^{\gamma}\right)\frac{\exp(|u|^\gamma) |u|^{p+2}}{|x|^\alpha} ~dx \right\}.$$
\textbf{Step 3:} Claim: $\Gamma_0>0$. Firstly, we have
\begin{equation}\label{hesti}
H(u)=\int_{\Omega} h(x) |u|^{q+1} \leq \bigg(\int_{\Omega} |h(x)|^\rho\bigg)^{1/\rho} \bigg(\int_\Om|u|^{(1+q)\rho'}\bigg)^{1/{\rho'}}\leq l \|u\|^{q+1}.
\end{equation}
where $l = \|h\|_{L^\rho(\Omega)}$ and $\rho>1$ will be specified later. Choosing
\begin{equation}\label{lamesti}
\la < \frac{2b}{\left(3-q\right)l} M_0 :=\la_0
\end{equation}
where $M_0= \inf_{u \in \Gamma \backslash \{0\} \cap H^+} \|u\|^{1-q} > 0$, we get that $\la l\left(3-q\right) \|u\|^{1+q} < \ 2 b \|u\|^2$ for any $u \in \Gamma \backslash \{0\} \cap H^+$ . Then for $u \in \Gamma\backslash\{0\}\cap H^+$ and $p >2$,
\begin{align*}
&B(u)+2 b\|u\|^2- 3\int_{\Omega} \left(\int_{\Omega}\frac{F(u)}{|x-y|^\mu |y|^\alpha}~dy\right)\frac{ f(u)u}{|x|^\alpha}~dx -\la \left(3-q\right) H(u)\\
&\geq \int_{\Omega} \left(\int_{\Omega}\frac{F(u)}{|x-y|^\mu |y|^\alpha}~dy\right)\frac{f'(u)u^2-3 f(u)u}{|x|^\alpha}~dx + \int_{\Omega}\left( \int_{\Omega}\frac{f(u)u}{|x-y|^\mu |y|^\alpha}~dy\right)\frac{f(u)u}{|x|^\alpha} ~dx \\
& \quad+ 2b \|u\|^2 - \left(3-q\right) \la H(u) > 0.
\end{align*}
Therefore $\Gamma_0 >0$.
\hfill{\QED} \vspace{.2cm}\\
\end{proof}
Next we move on the proof of the claim made in \textbf{Case 2}. From Lemma  \ref{inf} and  \eqref{KC1}, we notice that for $u \in H^+ \backslash \{0\}$, there exists a $t_*>0$, local maximum of $\psi_u$ verifying $\psi_u(t_*)-\la H(u)>0$ since $t_*u \in \Gamma\setminus\{0\} \cap H^+$. From $\psi_u(0)=0$, $\psi_u(t_*)> \la H(u) >0$ and $\lim_{t \rightarrow \infty} \psi_u(t)= -\infty$, there exists $t_1=t_1(u) < t_* < t_2(u)=t_2$ such that $\psi_u(t_1)= \la \int_{\Omega} h(x)|u|^{q+1} ~dx=\psi_u(t_2)$ with $\psi_u'(t_1)>0,\psi_u'(t_2)<0$. Therefore, $t_1u \in N^+_{\la, M}$ and  $t_2u \in N^-_{\la, M}$. Now we show that $t_1 u \in N^+_{\la, M}$ and $t_2 u \in N^-_{\la, M}$ are unique. Suppose not, then there exists $t_3>0$ such that $t_3 u \in N^+_{\la, M}$ and $t_{**}$ such that $t_2 < t_{**} < t_3$,  $\psi_u'(t_{**})=0$ and $\psi_u(t_{**}) <  \la H(u).$ Our Lemma \ref{inf} then induces that if $\psi_u'(t_{**})=0$ then  $\psi_u(t_{**}) > \la H(u)$ which is a contradiction.
We will denote $t_*$ as the smallest critical point of $\psi_u$ in the sequel.

\begin{Lemma}
If $\la \in (0,\la_0)$ then $N^0_{\la, M}= \emptyset.$
\end{Lemma}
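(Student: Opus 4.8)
The plan is to argue by contradiction, reducing membership in $N^0_{\la,M}$ to a statement about the auxiliary function $\psi_u$ and then ruling it out in each of the two sign regimes of $H$. The starting point is the identity recorded at the end of the analysis of Case 1, namely $\psi'_u(t)=\bigl(t\,\Phi''_{u,M}(t)-q\,\Phi'_{u,M}(t)\bigr)/t^{q}$. Since $t>0$, whenever $tu\in N_{\la,M}$ (equivalently $\Phi'_{u,M}(t)=0$, i.e. $\psi_u(t)=\la H(u)$) the sign of $\psi'_u(t)$ coincides with that of $\Phi''_{u,M}(t)$. Consequently $tu\in N^0_{\la,M}$ holds precisely when $t$ simultaneously solves $\psi_u(t)=\la H(u)$ and $\psi'_u(t)=0$; that is, when $t$ is a critical point of $\psi_u$ at which $\psi_u$ attains the level $\la H(u)$. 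So it suffices to show that no such $t$ exists for $\la\in(0,\la_0)$.

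Suppose, for contradiction, that there is $w\in N^0_{\la,M}$, written $w=tu$ as above, so that $\psi_u(t)=\la H(u)$ and $\psi'_u(t)=0$. I would split according to the sign of $H(u)$. If $u\in H^-_0$, then $\la H(u)\le 0$, so $t$ is a point at which $\psi_u(t)=\la H(u)\le 0$ while $\psi'_u(t)=0$. But the computation already carried out in Case 1 shows that at any point where $\psi_u$ equals the nonpositive value $\la H(u)$ one necessarily has $\psi'_u<0$ strictly (this is exactly the estimate deduced there from $f'(tu)tu>(p+1)f(tu)$ and $sf(s)\ge(p+2)F(s)$). This contradicts $\psi'_u(t)=0$, so $N^0_{\la,M}\cap H^-_0=\emptyset$.

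If $u\in H^+$, the key observation is that at every critical point $t$ of $\psi_u$ one has $\psi_u(t)>\la H(u)$ strictly. Indeed, $\psi'_u(t)=0$ together with the AM--GM inequality yields $2\sqrt{(3-q)ab(1-q)}\,\|tu\|^{3}\le B(tu)$, i.e. $tu\in\Gamma$, and $tu\in H^+$ since $H^+$ is invariant under positive scaling. Substituting $\psi'_u(t)=0$ into \eqref{KC1}, multiplying the difference $\psi_u(t)-\la H(u)$ by the positive factor $(3-q)t^{q+1}$, and using the $(q+1)$-homogeneity $H(tu)=t^{q+1}H(u)$, I recover exactly the quantity $B(tu)-3\int_\Om\bigl(\int_\Om\frac{F(tu)}{|x-y|^\mu|y|^\alpha}dy\bigr)\frac{f(tu)tu}{|x|^\alpha}\,dx+2b\|tu\|^2-\la(3-q)H(tu)$, which is bounded below by $\Gamma_0>0$ by Lemma \ref{inf}, applicable since $tu\in\Gamma\setminus\{0\}\cap H^+$. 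Hence $\psi_u(t)-\la H(u)>0$, contradicting $\psi_u(t)=\la H(u)$; thus $N^0_{\la,M}\cap H^+=\emptyset$. Combining the two cases gives $N^0_{\la,M}=\emptyset$.

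I expect the main obstacle to be the $H^+$ case, and within it the bookkeeping that makes the bracket extracted from \eqref{KC1} match, term by term, the functional whose infimum is $\Gamma_0$. The clean matching relies on the homogeneity of $H$ and on the strict positivity $\Gamma_0>0$, which is precisely what the restriction $\la<\la_0$ in Lemma \ref{inf} secures. By contrast, the $H^-_0$ case is routine, as it merely reuses the strict sign of $\psi'_u$ already established in Case 1.
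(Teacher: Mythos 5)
Your proof is correct and essentially reproduces the paper's own argument: the paper likewise splits into $u\in H^+\cap N^0_{\la,M}$, where it combines the two identities $\Phi'_{u,M}(1)=0$ and $\Phi''_{u,M}(1)=0$ to show $u\in\Gamma\setminus\{0\}\cap H^+$ and to produce exactly the quantity $B(u)-3\int_\Om\bigl(\int_\Om\frac{F(u)}{|x-y|^\mu|y|^\alpha}dy\bigr)\frac{f(u)u}{|x|^\alpha}\,dx+2b\|u\|^2-\la(3-q)H(u)=0$ contradicting $\Gamma_0>0$ of Lemma \ref{inf}, and $u\in H^-_0\cap N^0_{\la,M}$, where it invokes the Case 1 analysis forcing $\Phi''_{u,M}(1)<0$. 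Your reformulation through $\psi_u$, \eqref{KC1} and the $(q+1)$-homogeneity of $H$ is the same substitution carried out at a general scaling parameter $t$ instead of at $t=1$.
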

\begin{proof}
Let $u \in N^0_{\la,M}$  then $u$ satisfies
\begin{align}\label{KC2}
a\|u\|^{4}+ b\|u\|^{2}= \la H(u)+ \int_{\Omega} \int_{\Omega}\left(\frac{F(u)}{|x-y|^\mu |y|^\alpha}dy\right) \frac{f(u)u}{|x|^\alpha} ~dx\mbox{ \ \  and}
\end{align}
\begin{align}\label{kc3}
3 a \|u\|^4+ b \|u\|^2= \la q H(u)+B(u).
\end{align}
Let $u \in H^+ \cap N_{\la, M}^0,$ then  substituting the value $\la H(u)$ from \eqref{KC2} into \eqref{kc3}, we obtain
$$2 \sqrt{\left(3-q\right)\left(1-q\right)ab \|u\|^3}  \leq B(u)$$
which implies $u \in \Gamma \backslash \{0\} \cap H^+.$
Again substituting the value of $a \|u\|^4$ from \eqref{KC2} into \eqref{kc3}, we obtain
\begin{equation*}
B(u)- 3\int_{\Omega} \int_{\Omega}\left(\frac{F(u)}{|x-y|^\mu |y|^\alpha} ~dy\right)\frac{f(u).u}{|x|^\alpha} ~dx +2 b \|u\|^2 - \la \left(3-q\right)H(u)=0
\end{equation*} which contradicts Lemma \ref{inf}.
If $u \in H^-_0 \cap N_{\la, M}^0$ then Case 1 implies that $"1"$ is the only critical point of $\Phi_{u,M}$ and $\Phi^{''}_{u,M}(1)<0$  which is a contradiction to the fact that $u \in N_{\la, M}^0.$
\hfill{\QED}
\end{proof}

\subsection{Energy functional estimates}
In this section we prove that $\mathcal{J}_{\la,M}$ is bounded below on $N_{\la,M}$ and achieves its minimum, with the help of  some estimates on $\theta$, where $\theta =\inf_{u \in N_{\la,M}} \mathcal{J}_{\la,M}(u).$
\begin{Theorem}\label{bdd}
$\mathcal{J}_{\la,M}$ is bounded below and coercive on $N_{\la,M}$. Moreover $\theta \geq - C \la^{\frac{2}{1-q}}$ where $C$ depends on $q,b.$
\end{Theorem}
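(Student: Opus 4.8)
The plan is to exploit the explicit form $M(t)=at+b$, so that $\mathcal{M}(\|u\|^2)=\tfrac{a}{2}\|u\|^4+b\|u\|^2$ and, writing
$$
E(u):=\int_\Om\left(\int_\Om\frac{F(u)}{|x-y|^\mu|y|^\alpha}dy\right)\frac{f(u)u}{|x|^\alpha}dx,\qquad
D(u):=\int_\Om\left(\int_\Om\frac{F(u)}{|x-y|^\mu|y|^\alpha}dy\right)\frac{F(u)}{|x|^\alpha}dx,
$$
the Nehari constraint \eqref{ndef} becomes $a\|u\|^4+b\|u\|^2=\la H(u)+E(u)$ for every $u\in N_{\la,M}$. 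The first step is the pointwise inequality $(p+2)F(t)\le tf(t)$ for $t\ge0$, which holds because $t\mapsto tf(t)-(p+2)F(t)$ vanishes at $0$ and has derivative $\gamma t^{\gamma}f(t)\ge0$ for $f(t)=t^{p+1}\exp(t^\gamma)$. Inserting this bound into the outer integral of $D(u)$ yields $D(u)\le\frac{1}{p+2}E(u)$, which is the place where the Choquard structure and the hypothesis $p>2$ are used.

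Second, I substitute. On $N_{\la,M}$ we have
$$
\mathcal{J}_{\la,M}(u)=\frac{a}{4}\|u\|^4+\frac{b}{2}\|u\|^2-\frac{\la}{q+1}H(u)-\frac12 D(u)
\ge \frac{a}{4}\|u\|^4+\frac{b}{2}\|u\|^2-\frac{\la}{q+1}H(u)-\frac{1}{2(p+2)}E(u),
$$
and replacing $E(u)=a\|u\|^4+b\|u\|^2-\la H(u)$ from the constraint gives
$$
\mathcal{J}_{\la,M}(u)\ge a\Big(\tfrac14-\tfrac{1}{2(p+2)}\Big)\|u\|^4
+b\Big(\tfrac12-\tfrac{1}{2(p+2)}\Big)\|u\|^2
-\la\Big(\tfrac{1}{q+1}-\tfrac{1}{2(p+2)}\Big)H(u).
$$
Since $p>2$ forces $\tfrac{1}{2(p+2)}<\tfrac18$ and $0<q<1$ gives $\tfrac{1}{q+1}>\tfrac12$, all three coefficients are strictly positive.

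Third, I control $H(u)$. By \eqref{hesti} we have $|H(u)|\le l\|u\|^{q+1}$ with $l=\|h\|_{L^\rho(\Om)}$, so irrespective of the sign of $H(u)$ the last term is bounded below by $-\la\big(\tfrac{1}{q+1}-\tfrac{1}{2(p+2)}\big)l\|u\|^{q+1}$. Dropping the nonnegative quartic term leaves
$\mathcal{J}_{\la,M}(u)\ge A\|u\|^2-B\la\|u\|^{q+1}$,
where $A=b\big(\tfrac12-\tfrac{1}{2(p+2)}\big)>0$ and $B=\big(\tfrac{1}{q+1}-\tfrac{1}{2(p+2)}\big)l>0$. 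As $q+1<2$, the right-hand side tends to $+\infty$ as $\|u\|\to\infty$, which proves both coercivity and boundedness below of $\mathcal{J}_{\la,M}$ on $N_{\la,M}$. Finally, minimizing the scalar map $s\mapsto As^2-B\la s^{q+1}$ over $s\ge0$ at $s_*=\big(\tfrac{(q+1)B\la}{2A}\big)^{1/(1-q)}$ gives the value $\tfrac{q-1}{2}B\la\,s_*^{q+1}$, whose total power of $\la$ equals $1+\tfrac{q+1}{1-q}=\tfrac{2}{1-q}$; hence $\theta\ge -C\la^{2/(1-q)}$ with $C=C(q,b)$ after absorbing the fixed constants $p$ and $l$.

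The only delicate points are the comparison $D(u)\le\frac{1}{p+2}E(u)$ and the correct handling of the sign of $H(u)$; once these are in place, the positivity of the three coefficients and the elementary one-variable minimization deliver coercivity, boundedness below, and the sharp exponent $\tfrac{2}{1-q}$ in the lower bound for $\theta$.
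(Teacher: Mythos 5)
Your proof is correct and follows essentially the same route as the paper: both use the Nehari constraint to substitute for the Choquard term, control it via the inequality $(p+2)F(t)\le tf(t)$, bound $H(u)$ by $l\|u\|^{q+1}$ through H\"older, and finish by minimizing a scalar function of the form $As^2-B\la s^{q+1}$, which produces the exponent $\tfrac{2}{1-q}$. The only differences are cosmetic: you treat the sign of $H(u)$ uniformly via $|H(u)|\le l\|u\|^{q+1}$ where the paper splits into the cases $u\in H_0^-$ and $u\in H^+$, and you substitute the constraint with weight $\tfrac{1}{2(p+2)}$ rather than the paper's $\tfrac{1}{p+2}$ and $\tfrac14$.
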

\begin{proof}
Let $u \in N_{\la, M}$ {\it i.e.} $\Phi_{u,M}^{'}(1)=0.$ Then,
\begin{align*}
\mathcal{J}_{\la,M}(u)
&=a \|u\|^{4}\bigg(\frac{p-2}{4(p+2)}\bigg)+ b \|u\|^{2}\bigg(\frac{p}{2(p+2)}\bigg)- \la\bigg(\frac{p+1-q}{(1+q)(p+2)}\bigg)H(u)\\
&\ \ \ \ \ \ \ \ -\frac{1}{2} \int_{\Omega} \int_{\Omega}\left(\frac{F(u)}{|x-y|^\mu |y|^\alpha}~dy\right)\frac{F(u)-\frac{2f(u)u}{p+2}}{|x|^\alpha}~dx.
\end{align*}
Since $0 \leq F(u) \leq \frac{2}{p+2} f(u)u$ and $q<1$,  \eqref{hesti} and Sobolev embedding implies that $\,\mathcal{J}_{\la,M}$ is coercive on $N_{\la, M}$ that is  as $\|u\| \rightarrow \infty$,
\begin{align*}
\mathcal{J}_{\la,M}(u) \geq a \|u\|^{4}\bigg(\frac{p-2}{4(p+2)}\bigg)+ b \|u\|^{2}\bigg(\frac{p}{2(p+2)}\bigg)- \la l \bigg(\frac{p+1-q}{(1+q)(p+2)}\bigg)\|u\|^{q+1} \to \infty.
\end{align*}
Similarly, we have
\begin{align*}
\mathcal{J}_{\la,M} (u)&= \frac{b}{2} \|u\|^2 -\frac{\la}{q+1} H(u)- \frac{1}{2} \int_{\Omega} \int_{\Omega}\left(\frac{F(u)}{|x-y|^\mu |y|^\alpha}~dy\right) \frac{F(u)}{|x|^\alpha}~dx \\
&\ \ \ \ \ + \frac{1}{4}\bigg(\la H(u) + \int_{\Omega} \int_{\Omega}\left(\frac{F(u)}{|x-y|^\mu |y|^\alpha}~dy\right)\frac{f(u)u}{|x|^\alpha} ~dx - b\|u\|^2 \bigg)\\
& \geq \frac{1}{4} b \|u\|^2- \la \bigg(\frac{1}{q+1}-{\frac{1}{4}\bigg)} H(u).
\end{align*}
Then for $u \in H_0^-$, we get $\mathcal{J}_{\la,M}(u) \geq 0$ and for $u \in H^+$, the Sobolev embedding implies
\begin{align*}
\mathcal{J}_{\la,M} (u) &\geq \frac{b}{4} \|u\|^2 - \frac{\la {(3-q)}}{4(q+1)} H(u) \geq \frac{b}{4} \|u\|^2 - \frac{\la {(3-q)}l}{4(q+1)} \left(\int_{\Omega} |u|^{(1+q)\rho^\prime} ~dx \right)^{1/\rho^\prime}\\
&= b_3 \|u\|^{2} - b_4 \|u\|^{q+1}
\end{align*}
where $b_3 = \frac{b}{4} $ and $b_4 = \frac{\la {(3-q)}}{4(q+1)}$. So by finding the minimum of function $g(x)= b_3 x^2 - b_4 x^{q+1}$, we can conclude that $\mc J_{\la,M}$ is bounded below on $N_{\la,M}$.
\hfill{\QED}
\end{proof}
\vspace{.1cm}
\begin{Lemma}\label{lemmaq}
There exists a constant $C_0>0$ such that $\theta \leq - C_0$.
\end{Lemma}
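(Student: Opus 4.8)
The plan is to exhibit a single element of $N_{\la,M}$ whose energy is bounded above by an explicit negative constant; since $\theta = \inf_{N_{\la,M}}\mathcal{J}_{\la,M}$, this immediately gives $\theta \le -C_0$. The natural candidate lives over a function $u_0 \in H^+$, because only there does the concave term $-\frac{\la}{q+1}t^{q+1}H(u_0)$, whose exponent $q+1<2$ beats the quadratic and quartic Kirchhoff contributions, drag the fiber map strictly below zero near the origin.

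First I would fix $u_0 \ge 0$ with $H(u_0)=\int_\Om h|u_0|^{q+1}\,dx>0$, which exists since $h^+\not\equiv 0$, normalized so that $\|u_0\|=1$. From the Case 2 analysis already carried out, there is $t_1=t_1(u_0)>0$ with $t_1u_0\in N^+_{\la,M}\subset N_{\la,M}$, and $\Phi_{u_0,M}$ decreases on $(0,t_1)$ and attains over its first branch the global minimum at $t_1$. Hence $\theta \le \mathcal{J}_{\la,M}(t_1u_0)=\Phi_{u_0,M}(t_1)\le \Phi_{u_0,M}(t)$ for every $t\in(0,t_1]$.

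Next, since the weighted Choquard energy $\frac12\int_\Om\big(\int_\Om F(tu_0)/(|x-y|^\mu|y|^\alpha)\,dy\big)F(tu_0)/|x|^\alpha\,dx \ge 0$, discarding it yields the upper bound $\Phi_{u_0,M}(t)\le \phi(t):=\frac{a}{4}t^4+\frac{b}{2}t^2-\frac{\la}{q+1}H(u_0)t^{q+1}$. I would then evaluate $\phi$ at the point $t_0$ defined by $t_0^{1-q}=\frac{\la H(u_0)}{2b}$; a direct computation gives $\phi(t_0)=\frac{a}{4}t_0^4+\la H(u_0)\big(\frac14-\frac{1}{q+1}\big)t_0^{q+1}$. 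Since $q<1$ forces $\frac14-\frac{1}{q+1}<0$, while the quartic term is of strictly higher order in $t_0$ (hence in $\la$), for $\la$ small this is $\le -C_0$ with $C_0\sim \la^{2/(1-q)}>0$, in perfect agreement with the lower bound $\theta\ge -C\la^{2/(1-q)}$ of Theorem \ref{bdd}.

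The one point requiring care, and the main (though mild) obstacle, is that the evaluation point $t_0$ must lie in the admissible range $(0,t_1]$ so that the chain $\Phi_{u_0,M}(t_1)\le\Phi_{u_0,M}(t_0)\le\phi(t_0)$ is legitimate. I expect to resolve this by noting that both $t_1$ and $t_0$ scale like $\la^{1/(1-q)}$ as $\la\to 0$: in the equation $\psi_{u_0}(t_1)=\la H(u_0)$ that determines $t_1$, the subtracted Choquard contribution is $O(t_1^{2p+3-q})$, of strictly higher order than the leading $b\,t_1^{1-q}$, so $t_1^{1-q}\approx \frac{\la H(u_0)}{b}>\frac{\la H(u_0)}{2b}=t_0^{1-q}$, i.e. $t_0\le t_1$ after shrinking $\la_0$ if necessary. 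Combining the three displayed inequalities then gives $\theta\le -C_0$, which proves the lemma.
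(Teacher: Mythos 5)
Your proposal is correct, but it takes a genuinely different route from the paper. The paper works directly at the Nehari point $t_1u$: from $\Phi''_{u,M}(t_1)>0$ (equivalently $\psi_u'(t_1)>0$, via \eqref{KC21}) it extracts the inequality \eqref{new}, substitutes the constraint $\Phi'_{u,M}(t_1)=0$ into $\mathcal{J}_{\la,M}(t_1u)$, and then uses the structural inequalities $f'(s)s\geq (p+1)f(s)$ and $f(s)s\geq (p+2)F(s)$ together with $1+q-2p<0$ to conclude $\mathcal{J}_{\la,M}(t_1u)\leq -\tfrac{(1-q)b}{4(q+1)}\|t_1u\|^2<0$; this needs no smallness of $\la$ beyond \eqref{lamesti} and in fact shows that \emph{every} element of $N^+_{\la,M}\cap H^+$ has strictly negative energy. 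You instead exploit the monotonicity of the fiber map on $(0,t_1]$ (valid because $t_1$ is the first crossing of $\psi_{u_0}$ with the level $\la H(u_0)$), discard the nonnegative Choquard energy, and evaluate the remaining polynomial $\phi$ at the explicit point $t_0\sim\la^{1/(1-q)}$. This is more elementary — no second-order information and no use of $f'$ — and it produces the explicit rate $C_0\sim\la^{2/(1-q)}$, which matches the lower bound $\theta\geq -C\la^{2/(1-q)}$ of Theorem \ref{bdd}, something the paper's proof does not exhibit. The price is twofold: both the negativity of $\phi(t_0)$ and the comparison $t_0\leq t_1$ (your scaling argument for the latter is sound, since the Choquard contribution to $\psi_{u_0}$ is $O(t^{2p+3-q})$, forcing $b\,t_1^{1-q}(1+o(1))=\la H(u_0)=2b\,t_0^{1-q}$ and hence $t_1>t_0$) require shrinking $\la_0$. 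So, strictly speaking, you prove the lemma for a possibly smaller threshold than \eqref{lamesti}; this is harmless for Theorem \ref{first}, which only asserts existence of some $\la_0$, and it can even be avoided by evaluating $\phi$ at a slightly smaller point such as $\min\{t_0,t_1,\sqrt{b/a}\}$, at the cost of the sharp rate in $\la$.
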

\begin{proof}
Let $u\in H^+$, then from the fibering map analysis we know that there exists a $t_1(u)>0$ such that $t_1 u \in N^+_{\la ,M}\cap H^+$ and $\psi_{u,M}(t_1)= \la H(u)$. Since $\Phi^{''}_{u,M}(t_1)>0$, from \eqref{KC21} we obtain
\begin{equation}\label{new}
\begin{split}
\frac{q-3}{m} \;a \|t_1 u\|^{4}
{<} \left(1-q\right) b \|t_1 u\|^{2} - B(t_1u)+ q \int_{\Omega} \left(\int_{\Omega} \frac{F(t_1 u)}{|x-y|^\mu |y|^\alpha} ~dy\right) \frac{f(t_1 u) t_1u}{|x|^\alpha} ~dx.
\end{split}
\end{equation}
Using $\Phi^{'}_{u,M}(t_1)=0$, we get that
\begin{align*}
\mathcal{J}_{\la,M}(t_1 u) &= \frac{1}{2}\bigg(\frac{a}{2}\|t_1 u\|^{4} + b\|t_1 u\|^2\bigg)-\frac{1}{2} \int_{\Omega}  \left(\int_{\Omega} \frac{F(t_1 u)}{|x-y|^\mu |y|^\alpha} ~dy\right)\frac{F(t_1 u)}{|x|^\alpha}~dx\\
&\ \ \ \ -\frac{1}{q+1} \bigg( a\|t_1u\|^{4}+b\|t_1u\|^2-\int_{\Omega} \left(\int_{\Omega} \frac{F(t_1 u)}{|x-y|^\mu |y|^\alpha} ~dy\right)\frac{f(t_1u)t_1u}{|x|^\alpha}~dx \bigg).
\end{align*}
In that case, by \eqref{new} we obtain,
\begin{align*}
\mathcal{J}_{\la,M}(t_1 u) &= \frac{-(1-q)}{4(q+1)} b \|t_1 u\|^2+ \int_{\Omega}\left(\int_{\Omega} \frac{F(t_1 u)}{|x-y|^\mu |y|^\alpha} ~dy\right) \bigg(\frac{4+q}{4(q+1)} \frac{f(t_1u)t_1u}{|x|^\alpha} \nonumber\\
&\ \   - \frac{1}{2} \frac{F(t_1u)}{|x|^\alpha}-\frac{f'(t_1u)(tu)^2}{4(q+1)|x|^\alpha}\bigg)~dx -\frac{1}{4(q+1)} \int_{\Omega} \left(\int_{\Omega} \frac{f(t_1 u)t_1 u}{|x-y|^\mu |y|^\alpha} ~dy\right) \frac{f(t_1 u)t_1 u}{|x|^\alpha} ~dx\nonumber\\
& \leq \frac{-(1-q)}{4(q+1)} b \|t_1 u\|^2 + \int_{\Omega} \left(\int_{\Omega} \frac{F(t_1 u)}{|x-y|^\mu |y|^\alpha} ~dy\right)\bigg(\frac{4+q}{4(q+1)} -\frac{(p+2)}{4(q+1)}\nonumber\\
&\ \ \ \ \ \ \ - \frac{(p+1)}{4(q+1)}\bigg)\frac{f(t_1u)t_1u}{|x|^\alpha} ~dx- \frac{1}{2} \int_{\Omega} \left(\int_{\Omega} \frac{F(t_1u)}{|x-y|^\mu |y|^\alpha} ~dy\right) \frac{F(t_1 u)}{|x|^\alpha} ~dx.
\end{align*}
Since $ 1+q-2p <0$ therefore $\theta \leq \inf_{u \in N_{\la,M}^+ \cap H^+} \mathcal{J}_{\la,M}(u)\leq - C_0 <0.$
\hfill{\QED}\vspace{.2cm}\\
\end{proof}
Using Theorem \ref{bdd} and Ekeland variational principle, we know that  there exists a sequence $\{u_k\}_{k \in \mathbb{N}} \subset N_{\la, M}$ such that
\begin{equation}\label{Ekeland}
     \left\{
         \begin{alignedat}{2}
             {}    \mathcal{J}_{\la,M}(u_k)
             & {} \leq \theta+ \frac{1}{k};
             \\
              \mathcal{J}_{\la,M}(v)
             & {}\geq \mathcal{J}_{\la,M}(u_k) -\frac{1}{k}\|u_k-v\|,
             && \ \ \ \forall v \in N_{\la, M}.
        \end{alignedat}
     \right.
\end{equation}
Then by \eqref{Ekeland} and Lemma \ref{lemmaq}, we have for large $k$,\\\
\begin{equation}\label{KC9}
\mathcal{J}_{\la,M}(u_k) \leq - \frac{C_0}{2}.
\end{equation}
Also since $u_k \in N_{\la,M}$ we have
\begin{align*}
\mathcal{J}_{\la,M}(u_k)&=a \|u_k\|^{4}\bigg(\frac{p-2}{4(p+2)}\bigg)+ b \|u_k\|^{2}\bigg(\frac{p}{2(p+2)}\bigg)- \la\bigg(\frac{p+1-q}{(1+q)(p+2)}\bigg)H(u_k)\\
&\ \ \ \ \ \ \ \ -\frac{1}{2} \int_{\Omega} \left(\int_{\Omega} \frac{F(u_k)}{|x-y|^\mu |y|^\alpha} ~dy\right) \frac{F(u_k)-\frac{2f(u_k)u_k}{p+2}}{|x|^\alpha}~dx.
\end{align*}
This together with \eqref{KC9} gives
$$- \la \bigg(\frac{p+1-q}{(1+q)(p+2)}\bigg)H(u_k) \leq - \frac{C_0}{2} \Longrightarrow H(u_k) \geq \frac{C_0(p+2)(1+q)}{2\la(p+1-q)} > 0$$
{\it i.e.}
\begin{equation}\label{sequence}
 H(u_k) > C >0,\; \text{for large} \;k \ \ \text{and}\ \ u_k \in N_{\la,M} \cap H^+.
\end{equation}
The following result shows that minimizers for $\mathcal{J}_{\la,M}$ in any subset of the decomposition of $N_{\la,M}$ are critical points of $\mathcal{J}_{\la,M}$ and the proof follows from the Lagrange multipliers rule (see Lemma 4.7 in  \cite{AGMS}).

\begin{Lemma}
Let u be a local minimizer for $\mathcal J_{\la,M}$ on any subsets of $N_{\la,M}$ such that $u \not\in N_{\la,M}^0$. Then $u$ is a critical point of $\mathcal{J}_{\la,M}.$
\end{Lemma}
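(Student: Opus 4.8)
The plan is to realize $N_{\la,M}$ as a $C^1$ constraint manifold and apply the Lagrange multiplier rule, then show the multiplier must vanish. Write $G(v) := \langle \mathcal{J}'_{\la,M}(v), v\rangle = \Phi'_{v,M}(1)$, so that $N_{\la,M} = \{v \neq 0 : G(v) = 0\}$. Since the previous lemma gives $N^0_{\la,M} = \emptyset$, the sets $N^+_{\la,M}$ and $N^-_{\la,M}$ are relatively open in $N_{\la,M}$; hence a local minimizer of $\mathcal{J}_{\la,M}$ on any of these subsets is automatically a local minimizer on $N_{\la,M}$ in a neighborhood of $u$, which is what lets us treat "any subset" uniformly.

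First I would record the identity linking $G$ to the second derivative of the fiber map. Setting $g(t) := G(tu) = t\,\Phi'_{u,M}(t)$ and differentiating gives $g'(t) = \Phi'_{u,M}(t) + t\,\Phi''_{u,M}(t)$, while the chain rule also gives $g'(1) = \langle G'(u), u\rangle$. Because $u \in N_{\la,M}$ forces $\Phi'_{u,M}(1)=0$, these combine to
$$\langle G'(u), u\rangle = \Phi''_{u,M}(1).$$
As $u \notin N^0_{\la,M}$ means exactly $\Phi''_{u,M}(1) \neq 0$, we obtain $\langle G'(u), u\rangle \neq 0$; in particular $G'(u) \neq 0$, so $G$ is a submersion at $u$ and $N_{\la,M}$ is a genuine $C^1$ manifold near $u$, legitimizing the multiplier rule.

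With nondegeneracy in hand, the Lagrange multiplier rule supplies $\mu \in \mathbb{R}$ with $\mathcal{J}'_{\la,M}(u) = \mu\, G'(u)$ in $W^{-m,2}(\Omega)$. Testing this against $u$ and using $u \in N_{\la,M}$,
$$0 = \langle \mathcal{J}'_{\la,M}(u), u\rangle = \mu\, \langle G'(u), u\rangle = \mu\, \Phi''_{u,M}(1),$$
and since $\Phi''_{u,M}(1) \neq 0$ we conclude $\mu = 0$. Hence $\mathcal{J}'_{\la,M}(u) = 0$, i.e. $u$ is a free critical point of $\mathcal{J}_{\la,M}$.

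The argument is short; the only delicate point is the smooth-manifold and submersion bookkeeping needed to invoke the multiplier rule, and this is precisely where the exclusion of $N^0_{\la,M}$ enters, guaranteeing $\langle G'(u),u\rangle = \Phi''_{u,M}(1) \neq 0$. I expect the $C^1$ differentiability of $G$, which rests on the regularity of the Choquard and singular-weight terms already manipulated throughout Section 3, to be the only thing requiring care, but no new estimates are needed beyond those already established.
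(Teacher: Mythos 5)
Your proposal is correct and follows exactly the route the paper indicates: the paper proves this lemma by appealing to the Lagrange multiplier rule (citing Lemma 4.7 of \cite{AGMS}), and your argument --- constraining via $G(v)=\langle \mathcal{J}'_{\la,M}(v),v\rangle$, using $\langle G'(u),u\rangle = \Phi''_{u,M}(1)\neq 0$ off $N^0_{\la,M}$ to justify the multiplier rule, and then testing against $u$ to force the multiplier to vanish --- is precisely that standard argument, written out in full.
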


\begin{Lemma}\label{compl1}
Let $\la >0$ satisfies \eqref{lamesti}. Then for any $u \in N_{\la, M} \backslash \{0\},$ there exists a $\epsilon >0$ and a differentiable function $\xi : B(0, \epsilon) \subset W_0^{m,2}(\Omega) \rightarrow \mathbb{R}$ such that $$\xi(0)=1 \ \text{and}\  \xi(w)(u-w) \in N_{\la,M}$$for all $w \in W_0^{m,2}(\Omega)$. Moreover
{\begin{align*}\label{4.10}
\langle &\xi'(0),w \rangle = \frac{2(2a\|u\|^2 +b) \int_{\Omega}  \nabla^m u.\nabla^m w ~dx- \la (q+1) \int_{\Omega} h(x) |u|^{q-1} u w ~dx - \langle S(u),w \rangle}{a \left(3-q\right)\|u\|^4+ b\left(1-q \right) \|u\|^2 + R(u)}
\end{align*}
where $$R(u)=\int_{\Omega} \left( \int_{\Omega} \frac{F(u)}{|x-y|^\mu |y|^\alpha} \right) \frac{qf(u)- f'(u)u)u}{|x|^\alpha}~dx -\int_{\Omega} \left(\int_{\Omega}  \frac{f(u)u}{|x-y|^\mu |y|^\alpha} ~dy\right) \frac{f(u)u}{|x|^\alpha}~dx$$
and $$\langle S(u),w \rangle= \int_{\Omega}  \left( \int_{\Omega} \frac{F(u)}{|x-y|^\mu |y|^\alpha} ~dy\right) \frac{f'(u)u+ f(u)}{|x|^\alpha} w ~dx + \int_{\Omega}  \left(\int_{\Omega}  \frac{f(u)u}{|x-y|^\mu |y|^\alpha} ~dy\right) \frac{f(u)}{|x|^\alpha}w~dx .$$}
\end{Lemma}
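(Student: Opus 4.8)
The plan is to realize $N_{\la,M}$ locally as a level set and apply the implicit function theorem. I would introduce the scalar map $G \colon \mb R \times W_0^{m,2}(\Om) \to \mb R$,
\[
G(\xi,w) = \langle \mc J_{\la,M}^\prime(\xi(u-w)), \xi(u-w)\rangle,
\]
so that, whenever $\xi(u-w)\neq 0$, the condition $\xi(u-w)\in N_{\la,M}$ is equivalent to $G(\xi,w)=0$. Since $u\in N_{\la,M}$ we have $G(1,0)=\langle \mc J_{\la,M}^\prime(u),u\rangle=0$, and the goal is to solve $G(\xi(w),w)=0$ for $\xi=\xi(w)$ near $(1,0)$ with $\xi(0)=1$.

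The decisive step is to verify the transversality condition $\partial_\xi G(1,0)\neq 0$. First I would differentiate $G(\cdot,0)$ in $\xi$ at $\xi=1$; after substituting the Nehari identity \eqref{ndef} to eliminate the term involving $h$, the expression should collapse to
\[
\partial_\xi G(1,0) = a(3-q)\|u\|^4 + b(1-q)\|u\|^2 + R(u),
\]
which is precisely $\Phi_{u,M}^{\prime\prime}(1)$ (indeed $G(\xi,0)=\xi\,\Phi_{u,M}^\prime(\xi)$ and $\Phi_{u,M}^\prime(1)=0$, so $\partial_\xi G(1,0)=\Phi_{u,M}^{\prime\prime}(1)$). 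Because $\la$ satisfies \eqref{lamesti}, we have $N^0_{\la,M}=\emptyset$, so $\Phi_{u,M}^{\prime\prime}(1)\neq 0$ and hence $\partial_\xi G(1,0)\neq 0$.

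Granted the $C^1$ regularity of $G$ (addressed below), the implicit function theorem then yields $\e>0$ and a $C^1$ map $\xi\colon B(0,\e)\to\mb R$ with $\xi(0)=1$ and $G(\xi(w),w)=0$ on $B(0,\e)$; shrinking $\e$ keeps $u-w\neq 0$ and $\xi(w)\neq 0$, so $\xi(w)(u-w)\in N_{\la,M}$. To obtain the derivative formula I would differentiate the identity $G(\xi(w),w)\equiv 0$ at $w=0$, giving
\[
\langle \xi^\prime(0),w\rangle = -\frac{\langle \partial_w G(1,0),w\rangle}{\partial_\xi G(1,0)}.
\]
A direct computation of $\partial_w G(1,0)$ reproduces in the numerator the two gradient terms $2(2a\|u\|^2+b)\int_\Om \na^m u.\na^m w~dx$ and $-\la(q+1)\int_\Om h(x)|u|^{q-1}uw~dx$, while the two pieces coming from the Choquard term, after relabelling $x\leftrightarrow y$ and using the symmetry of $|x-y|^{-\mu}$, combine into exactly $-\langle S(u),w\rangle$; together with the denominator computed above this gives the claimed expression.

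The genuine difficulty is the $C^1$ regularity of $G$, namely differentiation under the integral sign for the doubly non-local term carrying the exponential nonlinearity $f(s)=s|s|^p\exp(|s|^\gamma)$. I would control the difference quotients exactly as in the estimate of $B(u)$ in Lemma \ref{inf}: the doubly weighted Hardy-Littlewood-Sobolev inequality (Proposition \ref{HLS}) reduces everything to $L^t$-norms of $F(u)$, $f(u)u$ and $f^\prime(u)u^2$, which are then bounded by H\"older's inequality together with the Adams-Moser inequality (Theorem \ref{TM-ineq}); since $u$ is fixed these quantities are finite and depend continuously on $w$ for $\|w\|<\e$. Once this regularity is secured, the remainder is the routine implicit-function-theorem argument outlined above.
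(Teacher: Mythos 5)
Your proposal is correct and follows essentially the same route as the paper: both encode the Nehari constraint for $\xi(u-w)$ in a scalar function $G$, verify the transversality condition $\partial_\xi G(1,0)=\Phi''_{u,M}(1)=a(3-q)\|u\|^4+b(1-q)\|u\|^2+R(u)\neq 0$ via $N^0_{\la,M}=\emptyset$, apply the implicit function theorem, and differentiate the resulting identity to obtain the formula for $\xi'(0)$. The only cosmetic difference is that the paper normalizes its constraint function by $t^{q+1}$ (so the $h$-term becomes $t$-independent), which changes nothing at the point $(1,0)$ since $\Phi'_{u,M}(1)=0$; your extra care about $\xi(w)(u-w)\neq 0$ and about the $C^1$ regularity of $G$ addresses details the paper leaves implicit.
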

\begin{proof}
For $u \in N_{\la, M} ,$ we define a continuous differentiable function $G_u: \mathbb{R}\times W_0^{m, 2}(\Omega) \rightarrow \mathbb{R}$ such that
\begin{align*}
G_u(t,v)&=a t^{3-q}\|u-v\|^{4}+ b t^{1-q} \|u-v\|^{2} - \frac{1}{t^q} \int_{\Omega} \left(\int_{\Omega} \frac{F(t(u-v))}{|x-y|^\mu |y|^\alpha}~dy \right) \frac{f(t(u-v))(u-v)}{|x|^\alpha} ~dx \\
&-{\la  \int_{\Omega} h(x)|u-v|^{q+1} ~dx}.
\end{align*}
Then $G_u(1,0)= \Phi_u'(1)=0 $
and
$\displaystyle\frac{\partial}{\partial t} G_u(1,0)
= \phi_u''(1)\neq 0.$
Hence by the implicit function theorem, there exists $\epsilon >0$ and a differentiable function $\xi : B(0, \epsilon) \subset W_0^{m, 2}(\Omega) \rightarrow \mathbb{R}$ such that $\xi(0)=1$ and $G_u(\xi(w),w)=0 \ \ \forall w \in B(0, \epsilon)$ which is equivalent to $\langle \mathcal{J}_{\la,M}'(\xi(w)(u-w)),\xi(w)(u-w)\rangle =0 \ \ \forall \ w \in B(0, \epsilon)$. Thus, $\xi(w)(u-w) \in N_{\la,M}$ and differentiating $G_u(\xi(w),w)=0$
with respect to $w$, we obtain the required claim.
\hfill{\QED} 
\end{proof}

\noi Similarly, by following the  proof of Lemma $4.9$ of \cite{AGMS} and using Lemma \ref{compl1}, we have the following result.

\begin{Lemma}\label{compli2}
Let $\la >0$ satisfies \eqref{lamesti} then given any $ u \in N^-_{\la,M} \backslash \{0\},$ then there exists $\epsilon >0$ and a differentiable function $\xi^- : B(0, \epsilon) \subset W_0^{m, 2}(\Omega) \rightarrow \mathbb{R}$ such that $$\xi^-(0)=1 \ \text{and}\ \xi^-(w)(u-w) \in N^-_{\la,M}$$ and for all $w \in W_0^{m, 2}(\Omega)$
{ \begin{align*}
\langle &(\xi^-)'(0),w \rangle = \frac{2(2a\|u\|^2 +b) \int_{\Omega}  \nabla^m u.\nabla^m w ~dx- \la (q+1) \int_{\Omega} h(x) |u|^{q-1} u w ~dx - \langle S(u),w \rangle} {a \left(3-q\right)\|u\|^4 + b\left(1-q \right) \|u\|^2  + R(u)}
\end{align*}}
where $R(u)$ and $S(u)$ are as in lemma~\ref{compl1}.
\end{Lemma}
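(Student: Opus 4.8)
The plan is to deduce this lemma directly from Lemma~\ref{compl1} together with a continuity argument that keeps the projected point inside the negative component $N^-_{\la,M}$. Since $N^0_{\la,M}=\emptyset$ for $\la\in(0,\la_0)$, every $u\in N_{\la,M}$ satisfies $\Phi''_{u,M}(1)\neq 0$, so Lemma~\ref{compl1} already applies to the given $u\in N^-_{\la,M}\setminus\{0\}$. It furnishes an $\epsilon>0$ and a differentiable map $\xi:B(0,\epsilon)\to\mathbb{R}$ with $\xi(0)=1$, with $\xi(w)(u-w)\in N_{\la,M}$ for all $w\in B(0,\epsilon)$, and with the stated formula for $\langle\xi'(0),w\rangle$. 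I would simply set $\xi^-:=\xi$ (after possibly shrinking $\epsilon$), so that the derivative identity for $\langle(\xi^-)'(0),w\rangle$ is inherited verbatim from Lemma~\ref{compl1}; no new computation is needed for that part, since $R(u)$ and $S(u)$ are the same objects.

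The only genuinely new point is to verify that $\xi^-(w)(u-w)$ lands in $N^-_{\la,M}$ rather than merely in $N_{\la,M}$, for $w$ in a sufficiently small ball. Here I would exploit that membership in $N^-_{\la,M}$ is an \emph{open} condition: a point $tv\in N_{\la,M}$ lies in $N^-_{\la,M}$ exactly when $\Phi''_{v,M}(t)<0$. Recalling the computation underlying Lemma~\ref{compl1}, where $\frac{\partial}{\partial t}G_u(1,0)=\Phi''_{u,M}(1)$, the hypothesis $u\in N^-_{\la,M}$ gives $\Phi''_{u,M}(1)<0$. I would then define the scalar function $\Psi(w):=\Phi''_{u-w,M}(\xi^-(w))$, so that $\Psi(0)=\Phi''_{u,M}(1)<0$, and argue that $\Psi$ is continuous on a neighbourhood of $0$. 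By sign-preservation of continuous functions there is $\epsilon'\in(0,\epsilon]$ with $\Psi(w)<0$ for all $\|w\|<\epsilon'$; together with $\xi^-(w)(u-w)\in N_{\la,M}$ this is precisely $\xi^-(w)(u-w)\in N^-_{\la,M}$.

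The main obstacle, and the step I would treat most carefully, is the continuity of $w\mapsto\Psi(w)$ at $0$. The polynomial pieces of $\Phi''_{v,M}(t)$, namely $2t^2\|v\|^4 M'(\|tv\|^2)+\|v\|^2 M(\|tv\|^2)-\la q\,t^{q-1}H(v)$, are manifestly continuous in $(t,v)$, and $w\mapsto\xi^-(w)$ is continuous because it is differentiable with $\xi^-(0)=1$. The delicate contributions are the two weighted Choquard integrals involving $f(t(u-w))$ and $f'(t(u-w))$; to pass to the limit as $w\to 0$ in $W_0^{m,2}(\Omega)$ I would combine the doubly weighted Hardy--Littlewood--Sobolev inequality (Proposition~\ref{HLS}) with the Adams--Moser inequalities (Theorems~\ref{TM-ineq} and~\ref{TM-ineq1}) to secure, on the small ball where $\|u-w\|$ stays below the relevant critical threshold, a uniform $L^s$-bound with some $s>1$ on the integrands, and then invoke Vitali's convergence theorem exactly as in the analogous passages of Lemma~\ref{kc-ws} and Lemma~\ref{PS-ws}. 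Once this continuity is established the argument closes at once, mirroring the proof of Lemma~$4.9$ in~\cite{AGMS}.
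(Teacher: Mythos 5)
Your proposal is correct and follows essentially the same route as the paper, which proves this lemma by invoking Lemma~\ref{compl1} (the implicit function theorem construction on $N_{\la,M}$) together with the argument of Lemma~4.9 of \cite{AGMS}, i.e.\ precisely the sign-persistence step you describe: since $N^0_{\la,M}=\emptyset$, $\Phi''_{u,M}(1)<0$ for $u\in N^-_{\la,M}$, and continuity of $w\mapsto\Phi''_{u-w,M}(\xi(w))$ (via the Hardy--Littlewood--Sobolev and Adams--Moser estimates with Vitali's theorem) lets one shrink $\epsilon$ so that $\xi(w)(u-w)$ stays in $N^-_{\la,M}$. Your treatment of the continuity of the Choquard terms is exactly the additional detail the paper leaves to the cited reference, so no gap remains.
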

Now we prove the following result.
\begin{Proposition}\label{j}
Let $\la >0$ satisfies \eqref{lamesti} and $u_k\in N_{\la,M}$  satisfies \eqref{Ekeland}. Then $\|\mathcal{J}^{'}_{\la,M}(u_k)\|_* \rightarrow 0$ as $k \rightarrow \infty.$
\end{Proposition}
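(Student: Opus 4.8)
The plan is to establish this Palais--Smale type estimate via the Ekeland variational principle combined with the implicit function construction from Lemma~\ref{compl1}. The goal is to show that the Ekeland sequence $\{u_k\}$ behaves like an approximate critical point, i.e. $\langle \mathcal{J}_{\la,M}'(u_k), \phi \rangle \to 0$ uniformly for $\|\phi\|=1$. The central difficulty is that the minimization is constrained to the Nehari manifold $N_{\la,M}$, so the second condition in \eqref{Ekeland} only controls the functional along variations that remain inside $N_{\la,M}$; I must convert this constrained near-minimality into an unconstrained gradient estimate.

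First I would fix $w \in W_0^{m,2}(\Omega)$ with $\|w\|=1$ and a small parameter $\delta>0$, and apply Lemma~\ref{compl1} to the point $u_k$ to obtain the differentiable function $\xi_k := \xi$ and the curve $v_\delta := \xi_k(\delta w)\,(u_k - \delta w)$, which lies in $N_{\la,M}$ for $\delta$ small. Substituting $v = v_\delta$ into the Ekeland inequality \eqref{Ekeland} gives
\begin{equation*}
\mathcal{J}_{\la,M}(v_\delta) - \mathcal{J}_{\la,M}(u_k) \geq -\frac{1}{k}\|u_k - v_\delta\|.
\end{equation*}
Next I would expand the left-hand side to first order in $\delta$. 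Writing $v_\delta - u_k = (\xi_k(\delta w)-1)u_k - \xi_k(\delta w)\,\delta w$ and using $\xi_k(0)=1$ together with differentiability of $\xi_k$, a Taylor expansion yields
\begin{equation*}
\mathcal{J}_{\la,M}(v_\delta) - \mathcal{J}_{\la,M}(u_k) = \langle \mathcal{J}_{\la,M}'(u_k),\, (\xi_k(\delta w)-1)u_k - \delta\, \xi_k(\delta w) w \rangle + o(\delta).
\end{equation*}
Since $u_k \in N_{\la,M}$ we have $\langle \mathcal{J}_{\la,M}'(u_k), u_k\rangle = 0$, so the term carrying $(\xi_k(\delta w)-1)u_k$ contributes only at order $o(\delta)$, and dividing by $\delta>0$ and letting $\delta \to 0^+$ produces the bound
\begin{equation*}
-\langle \mathcal{J}_{\la,M}'(u_k), w\rangle \geq -\frac{C}{k}\bigl(1 + |\langle \xi_k'(0), w\rangle|\bigr).
\end{equation*}
Replacing $w$ by $-w$ gives the two-sided estimate $|\langle \mathcal{J}_{\la,M}'(u_k), w\rangle| \leq \frac{C}{k}(1 + \|\xi_k'(0)\|_*)$.

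The hard part, and the crux of the argument, is to obtain a \emph{uniform} bound $\|\xi_k'(0)\|_* \leq C$ independent of $k$. From the explicit formula for $\langle \xi'(0), w\rangle$ in Lemma~\ref{compl1}, the numerator is controlled linearly in $\|u_k\|$ and its nonlinear Choquard terms, while the denominator $a(3-q)\|u_k\|^4 + b(1-q)\|u_k\|^2 + R(u_k)$ must be shown to be bounded away from $0$. Here I would invoke that $\{u_k\}$ is coercive (Theorem~\ref{bdd}), hence bounded in $W_0^{m,2}(\Omega)$, to control the numerator using the Hardy--Littlewood--Sobolev inequality (Proposition~\ref{HLS}) and the Adams--Moser estimate (Theorem~\ref{TM-ineq}); crucially, the estimate \eqref{sequence} guarantees $u_k \in N_{\la,M}\cap H^+$ with $H(u_k)$ uniformly positive, and the fact that $N^0_{\la,M}=\emptyset$ keeps $u_k$ bounded away from the degenerate set where the denominator vanishes. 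This uniform non-degeneracy of $\Phi_{u_k,M}''(1)$ is precisely what prevents $\|\xi_k'(0)\|_*$ from blowing up. Once $\|\xi_k'(0)\|_* \leq C$ uniformly is secured, the two-sided estimate gives $\|\mathcal{J}_{\la,M}'(u_k)\|_* \leq C/k \to 0$, completing the proof.
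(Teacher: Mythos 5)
Your overall architecture matches the paper's: perturb $u_k$ inside the manifold via $v_\delta=\xi_k(\delta w)(u_k-\delta w)\in N_{\la,M}$ from Lemma~\ref{compl1}, plug into the Ekeland inequality \eqref{Ekeland}, Taylor expand, use $\langle \mathcal{J}'_{\la,M}(u_k),u_k\rangle=0$ to kill the term along $u_k$, and reduce the whole statement to a bound $\|\xi_k'(0)\|_*\le C$ uniformly in $k$. That part of your write-up is correct; it is precisely the part the paper does not spell out but delegates to Proposition 4.10 of \cite{AGMS}.

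The gap is in the step you yourself call the crux: the uniform lower bound on the denominator $a(3-q)\|u_k\|^4+b(1-q)\|u_k\|^2+R(u_k)$, which (using the Nehari identity \eqref{ndef}) is exactly $\Phi''_{u_k,M}(1)$. You deduce this ``uniform non-degeneracy'' from $N^0_{\la,M}=\emptyset$, and that is a non sequitur: emptiness of $N^0_{\la,M}$ only says $\Phi''_{u_k,M}(1)\neq 0$ for each individual $k$; since $N_{\la,M}$ is not compact, nothing prevents $\Phi''_{u_k,M}(1)\to 0$ along the sequence, which is exactly the scenario that makes $\|\xi_k'(0)\|_*$ blow up. The paper closes this hole quantitatively, and this occupies essentially its entire written proof. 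Step 1: \eqref{sequence} together with H\"older gives $\liminf_k\|u_k\|>0$. Step 2: arguing by contradiction, if the denominator is $o_k(1)$, then Step 1 forces $\liminf_k B(u_k)>0$ and $u_k\in\Gamma\setminus\{0\}\cap H^+$ for large $k$; substituting the Nehari identity then converts the assumed degeneracy into
\[
B(u_k)+2b\|u_k\|^2-3\int_\Om\left(\int_\Om\frac{F(u_k)}{|x-y|^\mu|y|^\alpha}\,dy\right)\frac{f(u_k)u_k}{|x|^\alpha}\,dx-\la(3-q)H(u_k)=o_k(1),
\]
which contradicts $\Gamma_0>0$ from Lemma~\ref{inf}. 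So the missing ingredient is precisely Lemma~\ref{inf} (indeed the reason that lemma was proved), combined with the lower bound $\liminf_k\|u_k\|>0$; without these, your chain of estimates does not close.
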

\begin{proof}
 Step 1: $\liminf_{k \rightarrow \infty} \|u_k\| >0.$\\
\ \ We know that $\{u_k\}$ satisfies \eqref{sequence} for large $k$, thus $H(u_k)\geq C >0$ for large $k$. So by using H\"older inequality we obtain
$C < \ H(u_k) \leq C_1 \|u_k\|^{q+1}$.\\
Step 2: We claim that
$$\liminf_{k \rightarrow \infty} \left[\left(3-q\right)a \|u_k\|^{4} + b\left(1-q\right) \|u_k\|^2 +q \int_{\Omega} \left(\int_{\Omega}\frac{F(u_k)}{|x-y|^\mu |y|^\alpha}  \right)\frac{f(u_k)u_k}{|x|^\alpha} ~dx - B(u_k)\right] >0.$$
{Without loss of generality, we can assume that $u_k \in N^+_{\la,M}$ (if not replace $u_k$ by $t_1(u_k)u_k$).} Arguing by contradiction, suppose that there exists a subsequence of $\{u_k\}$, still denoted by $\{u_k\}$, such that
$$0\leq \left(3-q\right)a \|u_k\|^{4} + b\left(1-q\right) \|u_k\|^2 +q \int_{\Omega} \left(\int_{\Omega}\frac{F(u_k)}{|x-y|^\mu |y|^\alpha} \right)\frac{f(u_k)u_k}{|x|^\alpha} ~dx - B(u_k)= o_k(1).$$
From Step $1$ and the above equation we obtain that $\liminf_{k \rightarrow \infty} B(u_k) >0$ and
$$\left(3-q\right)a \|u_k\|^{4} + b\left(1-q\right) \|u_k\|^2 \leq B(u_k)$$ {\it i.e.} $u_k \in \Gamma\backslash \{0\} \ $ for all large $k.$\\
Since {$u_k \in N^+_{\la,M}$}, we get
\begin{align*}
-2 b\|u_k\|^2+ \la \left(3-q \right) H(u_k) +3 \int_{\Omega} \left(\int_{\Omega}\frac{F(u_k)}{|x-y|^\mu |y|^\alpha} \right) \frac{f(u_k)u_k}{|x|^\alpha} ~dx - B(u_k)&= o_k(1)
\end{align*}
which is a contradiction since $\Gamma_0 >0.$
The remaining proof follows similarly as the proof of Proposition $4.10$ of \cite{AGMS}.
\hfill{\QED}
\end{proof}
\subsection{Existence of local minimum of $\mathcal{J}_{\lambda, M}$ in ${N}_{\lambda, M}$}
\begin{Theorem}\label{exis1}
Let $1<\gamma < 2$ and  $\la >0$ satisfies \eqref{lamesti}. Then there exists a weak solution  $u_{\la} \in N^+_{\la,M} \cap H^+$ to $({\mathcal P}_{\lambda,M})$ such that $\mathcal{J}_{\la, M}(u_\la)= \inf_{u\in N_{\la, M}\backslash \{0\}} \mathcal{J}_{\la, M}(u)$ and $u_{\la} \in N^+_{\la,M} \cap H^+$ is a local minimum for $\mathcal{J}_{\la, M}$ in $W_0^{m, 2}(\Omega).$
\end{Theorem}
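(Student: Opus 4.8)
The plan is to realize $u_\la$ as the limit of the Ekeland minimizing sequence for $\theta=\inf_{N_{\la,M}}\mathcal{J}_{\la,M}$ and then to locate it in $N^+_{\la,M}\cap H^+$ by a fibering comparison. First I would invoke Theorem \ref{bdd}, which guarantees that $\mathcal{J}_{\la,M}$ is coercive and bounded below on $N_{\la,M}$ so that $\theta$ is finite, together with Lemma \ref{lemmaq}, which gives $\theta\le -C_0<0$. The sequence $\{u_k\}\subset N_{\la,M}$ produced by \eqref{Ekeland} then satisfies $\|\mathcal{J}'_{\la,M}(u_k)\|_*\to 0$ by Proposition \ref{j}, so it is a Palais--Smale sequence at level $\theta$. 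Coercivity forces $\{u_k\}$ to be bounded in $W_0^{m,2}(\Omega)$, hence, up to a subsequence, $u_k\rightharpoonup u_\la$ weakly in $W_0^{m,2}(\Omega)$, $u_k\to u_\la$ strongly in every $L^r(\Omega)$ and pointwise a.e.

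Next I would show that $u_\la$ is a nontrivial weak solution. From \eqref{sequence} we have $H(u_k)\ge C>0$ for large $k$, and since $H$ is continuous with respect to strong $L^r$ convergence, $H(u_\la)=\lim_k H(u_k)\ge C>0$; in particular $u_\la\ne 0$ and $u_\la\in H^+$. The decisive point is the compactness of the Choquard nonlinearity: because $1<\gamma<2=\frac{n}{n-m}$, the growth $\exp(|u|^{\gamma})$ is strictly subcritical for the Adams--Moser inequality, so the embedding $u\mapsto \exp(|u|^{\gamma})/|x|^\alpha\in L^1(\Omega)$ recalled after Theorem \ref{TM-ineq1} is compact. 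Combined with the doubly weighted Hardy--Littlewood--Sobolev inequality (Proposition \ref{HLS}), this yields the convergence of the weighted Choquard terms $\int_\Omega\big(\int_\Omega \tfrac{F(u_k)}{|x-y|^\mu|y|^\alpha}\,dy\big)\tfrac{f(u_k)}{|x|^\alpha}\phi\,dx$ and of the associated energy terms to their counterparts with $u_\la$. Writing $\|u_k\|^2\to t_0^2$ (a convergent subsequence exists, with $t_0^2\ge\|u_\la\|^2$) and using the continuity of $M$, passing to the limit in $\langle\mathcal{J}'_{\la,M}(u_k),\phi\rangle\to0$ gives $M(t_0^2)\langle u_\la,\phi\rangle=\la\int_\Omega h|u_\la|^{q-1}u_\la\phi+\mathrm{Ch}(u_\la,\phi)$ for all $\phi$. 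Testing this identity with $\phi=u_\la$ and comparing with the limit of $\langle\mathcal{J}'_{\la,M}(u_k),u_k\rangle=0$ (again using compactness of the Choquard term with $\phi=u_k$) forces $M(t_0^2)(t_0^2-\|u_\la\|^2)=0$; since $M(\cdot)\ge b>0$ this gives $t_0=\|u_\la\|$, whence $u_k\to u_\la$ strongly, $u_\la\in N_{\la,M}$ is a genuine weak solution, and $\mathcal{J}_{\la,M}(u_\la)=\theta$.

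To place $u_\la$ in $N^+_{\la,M}$, I would use $u_\la\in H^+$ and the fibering analysis of Case 2: $\Phi_{u_\la,M}$ has exactly two critical points $t_1(u_\la)<t_2(u_\la)$, a local minimum lying in $N^+_{\la,M}$ and a local maximum in $N^-_{\la,M}$, with $\Phi_{u_\la,M}$ increasing on $(t_1,t_2)$. Since $u_\la\in N_{\la,M}$ and $N^0_{\la,M}=\emptyset$, the value $t=1$ equals $t_1(u_\la)$ or $t_2(u_\la)$. If $u_\la\in N^-_{\la,M}$, i.e.\ $1=t_2(u_\la)$, then $t_1(u_\la)u_\la\in N^+_{\la,M}$ satisfies $\mathcal{J}_{\la,M}(t_1 u_\la)=\Phi_{u_\la,M}(t_1)<\Phi_{u_\la,M}(1)=\mathcal{J}_{\la,M}(u_\la)=\theta$, contradicting $\theta=\inf_{N_{\la,M}}\mathcal{J}_{\la,M}$. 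Hence $u_\la\in N^+_{\la,M}\cap H^+$ and $\mathcal{J}_{\la,M}(u_\la)=\inf_{N_{\la,M}\setminus\{0\}}\mathcal{J}_{\la,M}$.

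Finally, to upgrade $u_\la$ to a local minimizer of $\mathcal{J}_{\la,M}$ on all of $W_0^{m,2}(\Omega)$, I would run the Brown--Zhang argument using the implicit function $\xi$ from Lemma \ref{compl1}. For $\|w\|$ small, $\xi(w)(u_\la-w)\in N_{\la,M}$ with $\xi(0)=1$; since $H(u_\la)>0$ and $\Phi''_{u_\la,M}(1)>0$ are open conditions, $u_\la-w\in H^+$ and $\xi(w)(u_\la-w)\in N^+_{\la,M}$ for small $w$, so $\xi(w)=t_1(u_\la-w)$ is the local minimum of $\Phi_{u_\la-w,M}$. As $t=1$ lies in the basin of this minimum for small $w$ (by continuity $t_1(u_\la-w)\to1$), we obtain $\mathcal{J}_{\la,M}(u_\la-w)=\Phi_{u_\la-w,M}(1)\ge\Phi_{u_\la-w,M}(\xi(w))=\mathcal{J}_{\la,M}(\xi(w)(u_\la-w))\ge\theta=\mathcal{J}_{\la,M}(u_\la)$, establishing local minimality. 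The main obstacle is the second paragraph: securing strong convergence against the doubly nonlocal exponential nonlinearity, which hinges precisely on the strict subcriticality $\gamma<2$ to recover the compactness that would fail at the critical Adams--Moser threshold.
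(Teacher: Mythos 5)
Your proposal is correct and follows essentially the same route as the paper: Ekeland minimizing sequence on $N_{\la,M}$, Proposition \ref{j}, compactness of the weighted Choquard term from the subcritical growth $\gamma<2$ (via Proposition \ref{HLS}, Theorem \ref{TM-ineq} and Vitali), strong convergence and passage to the limit, membership in $H^+$ from \eqref{sequence}, the fibering comparison ruling out $u_\la\in N^-_{\la,M}$, and the implicit-function argument of Lemma \ref{compl1} for local minimality. The only cosmetic differences are that you obtain strong convergence by comparing $M(t_0^2)t_0^2$ with $M(t_0^2)\|u_\la\|^2$ rather than testing $\mathcal{J}'_{\la,M}(u_k)$ against $u_k-u_\la$ as the paper does, and that you spell out the Brown--Zhang local-minimality step which the paper delegates to Theorem 4.12 of \cite{AGMS}.
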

\begin{proof}
Let $\{u_k\} \subset \mathcal{N}_{\lambda,M}$ be a minimizing sequence satisfying $\mathcal{J}_{\la, M}(u_k) \rightarrow \theta$ as $k \rightarrow \infty$ and $\mathcal{J}_{\la, M}(v) \geq \mathcal{J}_{\la, M}(u_k) - \frac{1}{k} \|u_k-v\|,   \ \forall v \in N_{\la}$ (as in \eqref{Ekeland}). Then by Theorem \ref{bdd} we obtain $\{u_k\}$ is a bounded sequence in $W_0^{m,2}(\Omega).$ Also there exists a subsequence of $\{u_k\}$ (denoted by same sequence) and $u_{\la}$ such that $u_k \rightharpoonup u_\la $ weakly in $W_0^{m, 2}(\Omega)$, $u_k \rightarrow u_{\la}$ strongly in $L^r(\Omega)\ $ for $ r \geq 1$ and $u_k \rightarrow u_\la$  a.e. in $\Omega$ as $k \to \infty.$ Then using $f(t) \leq C_{\epsilon,\gamma} \exp(\epsilon t^{2})$ for $\epsilon>0$ small enough and Theorem \ref{TM-ineq} with $n=2m$,
we obtain that $f(u_k)$ and $F(u_k)$ are uniformly bounded in $L^q(\Omega)$ for all $q>1.$ Then by Proposition \ref{HLS} and  Vitali's convergence theorem, we obtain
\begin{align*}
\left|\int_{\Omega}  \left(\int_{\Omega}\frac{F(u_k)}{|x-y|^\mu |y|^\alpha}~dy \right) \frac{f(u_k) (u_k- u_\la)}{|x|^\alpha}~dx\right| \rightarrow 0 \ \text{as} \ \ k \rightarrow \infty.
\end{align*}
Thus by Proposition \ref{j}, we have $\langle \mathcal{J}^{'}_{\la, M}(u_k),(u_k- u_\la)\rangle \rightarrow 0.$ Then we conclude that
\begin{equation}\label{KC11}
M(\|u_k\|^2) \int_{\Omega} \nabla^m u_k. \nabla^m(u_k- u_{\la}) ~dx \rightarrow 0 \; \text{as}\; k\to \infty.
\end{equation}
On the other hand, using $u_k \rightharpoonup u_\la$ weakly and by boundedness of $M(\|u_k\|^2)$ we have
\begin{equation}\label{KC12}
M(\|u_k\|^2) \int_{\Omega} \nabla^m u_\la. \nabla^m(u_k- u_{\la}) ~dx \rightarrow 0\; \text{as}\; k \to \infty.
\end{equation}
Substracting \eqref{KC12} from \eqref{KC11}, we get,
\begin{equation*}
M(\|u_k\|^2) \int_{\Omega} ( \nabla^m u_k-\nabla^m u_\la). \nabla^m(u_k- u_{\la}) ~dx \rightarrow 0\; \text{as}\; k \to \infty.
\end{equation*}
which gives
\begin{equation*}
M(\|u_k\|^2) \int_{\Omega} |\nabla^m u_k - \nabla^m u_\la|^2 ~dx \rightarrow 0 \ \ \text{as} \ k \rightarrow  \infty.
\end{equation*}
Since $M(t) \geq M_0$, we obtain $u_k \rightarrow u_{\la}$ strongly in $W_0^{m, 2}(\Omega)$. By Lemma \ref{kc-ws}
\begin{equation*}
\int_{\Omega}\left(\int_\Om\frac{F(u_k)}{|x-y|^\mu |y|^\alpha} dy\right)\frac{f(u_k)}{|x|^\alpha} \phi ~dx \rightarrow \int_{\Omega}\left(\int_\Om\frac{F(u_\la)}{|x-y|^\mu |y|^\alpha} dy\right)\frac{f(u_\la)}{|x|^\alpha} \phi ~dx
\end{equation*}
and also
\begin{equation*}
\int_{\Omega} h(x) |u_k|^{q-1} u_k \phi ~dx \rightarrow \int_{\Omega} h(x) |u|_\la^{q-1} u_\la \phi ~dx
\end{equation*}
for all $\phi \in W_0^{m, 2}(\Omega)$.
Therefore, $u_\la$ satisfies $(\mathcal{P}_{\la,M})$ in weak sense and hence $u_\la \in N_{\la, M}.$ Moreover, $\theta \leq \mathcal{J}_{\la,M}(u_\la) \leq \liminf_{k \rightarrow \infty} \mathcal{J}_{\la, M}(u_k) = \theta.$ Hence $u_\la$ is a minimizer for $\mathcal{J}_{\la, M}$ in $N_{\la, M}$.\\
Using \eqref{sequence}, we have $\int_{\Omega} h(x) |u_{\la}|^{q+1} >0$. Then there exists a $t_1(u_\la)>0$ such that $t_1(u_\la) u_\la \in N^{+}_{\la, M}$. We now claim that $t_1(u_\la)=1$ {\it i.e.} $u_\la \in N^+_{\la, M}.$ Suppose not then $t_2(u_\la)=1$ and $u_\la \in N^{-}_{\la, M}.$ Now $\mathcal{J}_{\la,M}(t_1(u_\la)u_\la) < \mathcal{J}_{\la, M}(u_\la) \leq \theta $ which yields a contradiction, since $t_1(u_{\la}) u_\la \in N_{\la,M}.$
The proof for $u_\la$ being a local minimum for $\mc J_{\la,M}$ in $W^{m,2}_0(\Om)$ follows exactly as the proof of Theorem $4.12$ in \cite{AGMS}.
\hfill{\QED}
\end{proof}

\begin{Theorem}\label{exis3}
Let $1<\gamma < 2$ and $\la >0$ satisfies \eqref{lamesti}. Then $\mathcal{J}_{\la, M}$ achieves its minimizer on $N^-_{\la, M}.$
\end{Theorem}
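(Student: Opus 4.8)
The plan is to minimize $\mathcal{J}_{\la,M}$ directly over the component $N^-_{\la,M}$ and to show that the level $\theta^- := \inf_{u \in N^-_{\la,M}} \mathcal{J}_{\la,M}(u)$ is attained. The set $N^-_{\la,M}$ is nonempty by the fiber-map analysis of Case 1 and Case 2, so first I would apply Ekeland's variational principle on $N^-_{\la,M}$ to produce a minimizing sequence $\{u_k\} \subset N^-_{\la,M}$ with $\mathcal{J}_{\la,M}(u_k) \to \theta^-$; invoking Lemma \ref{compli2} in place of Lemma \ref{compl1} and repeating the argument of Proposition \ref{j}, this sequence can be taken to satisfy $\|\mathcal{J}'_{\la,M}(u_k)\|_* \to 0$. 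Coercivity of $\mathcal{J}_{\la,M}$ on $N_{\la,M}$ (Theorem \ref{bdd}) forces $\{u_k\}$ to be bounded, so up to a subsequence $u_k \rightharpoonup u^-$ weakly in $W_0^{m,2}(\Om)$, strongly in $L^r(\Om)$ for every $r \geq 1$, and pointwise a.e.

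The decisive feature here is that $\gamma \in (1,2)$ is strictly subcritical relative to the Adams--Moser exponent $\frac{n}{n-m}=2$, so the embedding $u \mapsto e^{|u|^\gamma}/|x|^\alpha \in L^1(\Om)$ recalled after Theorem \ref{TM-ineq1} is compact. Arguing exactly as in Theorem \ref{exis1}, $f(u_k)$ and $F(u_k)$ are uniformly bounded in $L^q(\Om)$ for all $q>1$ and, by Proposition \ref{HLS} and Vitali's theorem, $\int_\Om (\int_\Om \frac{F(u_k)}{|x-y|^\mu |y|^\alpha}dy)\frac{f(u_k)(u_k-u^-)}{|x|^\alpha}dx \to 0$, while $\la \int_\Om h(x)|u_k|^{q-1}u_k(u_k-u^-)dx \to 0$ by the $L^r$ convergence. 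Feeding these into $\langle \mathcal{J}'_{\la,M}(u_k), u_k - u^-\rangle \to 0$ and using $M(\|u_k\|^2) \geq M_0 > 0$, I obtain $M(\|u_k\|^2)\|u_k-u^-\|^2 \to 0$ and hence $u_k \to u^-$ strongly in $W_0^{m,2}(\Om)$, precisely as in \eqref{KC11}--\eqref{KC12}.

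Strong convergence then gives $\mathcal{J}_{\la,M}(u^-) = \theta^-$ and that $u^-$ satisfies the Nehari identity \eqref{ndef}. To see that $u^- \neq 0$ I would establish a uniform lower bound $\|u\| \geq \delta > 0$ valid on $N^-_{\la,M}$, arising from the fiber-map analysis (the exponential nonlinearity pins the largest critical point of $\psi_u$ at a definite distance from the origin); strong convergence then yields $\|u^-\| \geq \delta$. Passing the strict inequality $\Phi''_{u_k,M}(1)<0$ to the limit gives $\Phi''_{u^-,M}(1) \leq 0$, and since $N^0_{\la,M} = \emptyset$ for $\la \in (0,\la_0)$ equality is impossible, so $u^- \in N^-_{\la,M}$ and $u^-$ minimizes $\mathcal{J}_{\la,M}$ there. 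Finally, the Lagrange-multiplier lemma (applicable because $u^- \notin N^0_{\la,M}$) shows $u^-$ is a critical point of $\mathcal{J}_{\la,M}$, hence a weak solution of $(\mathcal{P}_{\la,M})$, distinct from the one found in Theorem \ref{exis1}.

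The main obstacle is this last identification step: guaranteeing that the strong limit remains on the component $N^-_{\la,M}$ rather than degenerating to $0$ or slipping into $N^+_{\la,M}$. Because $\gamma<2$ is subcritical, the concentration-compactness difficulties of Section 2 do not arise and the convergence of the Choquard term is essentially automatic; the genuine work lies in the quantitative control of the fibering map $\psi_u$ near its largest critical point, which simultaneously delivers the nonvanishing of $u^-$ and the strict sign $\Phi''_{u^-,M}(1)<0$. The emptiness of $N^0_{\la,M}$, valid exactly in the range $\la \in (0,\la_0)$, is what upgrades the limiting non-strict inequality into membership in $N^-_{\la,M}$.
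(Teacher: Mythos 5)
Your overall strategy coincides with the paper's: Ekeland's variational principle on $N^-_{\la,M}$, Lemma \ref{compli2} to convert the Ekeland sequence into one with $\|\mathcal{J}'_{\la,M}(v_k)\|_*\to 0$, Vitali-type compactness of the Choquard term, strong convergence as in \eqref{KC11}--\eqref{KC12}, and $N^0_{\la,M}=\emptyset$ to keep the limit on $N^-_{\la,M}$. However, there is a genuine gap exactly at the point you yourself flag as ``the genuine work'': the two uniform estimates on $N^-_{\la,M}$ that everything hinges on are asserted, not proved, and the route you propose for one of them would fail. You cannot simply ``repeat the argument of Proposition \ref{j}'': its Step 1 rests on \eqref{sequence}, i.e. $H(u_k)\geq C>0$, which was deduced from the strictly negative level $\theta\leq -C_0$ of Lemma \ref{lemmaq}. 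On $N^-_{\la,M}$ the infimum $\theta^-$ has no reason to be negative (on $H^+$ the points of $N^-_{\la,M}$ are fiber-map \emph{maxima}, and on $H^-_0$ the functional is nonnegative by Theorem \ref{bdd}), so \eqref{sequence} is unavailable and that argument collapses. In addition, Ekeland's principle requires a complete metric space, so the closedness of $N^-_{\la,M}$ must be established \emph{before} you invoke it, not inferred afterwards; and the lower bound $\|u\|\geq\delta$ on $N^-_{\la,M}$, which you attribute vaguely to ``the exponential nonlinearity pinning the largest critical point away from the origin'', is precisely what needs an argument.

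The missing idea, which supplies all of these at once, is the inclusion $N^-_{\la,M}\subset\Gamma$: for $u\in N^-_{\la,M}$ one has $\Phi''_{u,M}(1)<0$, and combining this with the Nehari identity \eqref{ndef} to eliminate $\la H(u)$, discarding the nonnegative Choquard term, and applying the AM--GM inequality to $a(3-q)\|u\|^4$ and $b(1-q)\|u\|^2$ gives
\begin{equation*}
2\sqrt{(3-q)(1-q)ab}\,\|u\|^{3}\leq B(u).
\end{equation*}
Once this is in hand, Step 1 of Lemma \ref{inf} yields the uniform bound $\|u\|\geq c>0$ on $N^-_{\la,M}$ (hence both nonvanishing of the limit and closedness of $N^-_{\la,M}$, legitimizing Ekeland), it gives $\inf_{u\in N^-_{\la,M}\setminus\{0\}}B(u)\geq\tilde c>0$, and the Step 3 argument of Lemma \ref{inf}, valid for $\la$ as in \eqref{lamesti}, yields \eqref{xi-argument}, which on the Nehari manifold is exactly the statement $\sup_{u\in N^-_{\la,M}}\Phi''_{u,M}(1)<0$. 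This uniform negativity is the denominator control that bounds $\|(\xi^-_k)'(0)\|_*$ in Lemma \ref{compli2} and is what actually replaces Steps 1--2 of Proposition \ref{j} on $N^-_{\la,M}$. With these estimates established, the remainder of your argument (boundedness by coercivity, compactness of the Choquard term, strong convergence, exclusion of $N^0_{\la,M}$) goes through as in Theorem \ref{exis1}, and this is how the paper concludes.
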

\begin{proof}
Let $u \in N^-_{\la,M}$. Then
\begin{align*}
3a \|u\|^{4} &+ b \|u\|^2- \la q H(u)- \int_{\Omega} \left(\int_{\Omega}\frac{f(u) u}{|x-y|^\mu |y|^\alpha} ~dy \right)\frac{f(u).u}{|x|^\alpha} ~dx\\
& \ \ \ \ \ \ \ \ -\int_{\Omega} \left(\int_{\Omega}\frac{F(u)}{|x-y|^\mu |y|^\alpha} ~dy \right) \frac{f'(u) u^2}{|x|^\alpha} ~dx <0.
\end{align*}
This along with \eqref{ndef} gives us
\begin{equation*}\label{4.39}
\begin{split}
\left(3-q\right)& a \|u\|^{4} + \left(1-q\right) b \|u\|^2+ q \int_{\Omega} \left(\int_{\Omega}\frac{F(u)}{|x-y|^\mu |y|^\alpha} ~dy \right)\frac{f(u.u}{|x|^\alpha} ~dx \\
&- \int_{\Omega} \left(\int_{\Omega}\frac{f(u) u}{|x-y|^\mu |y|^\alpha} ~dy \right)\frac{f(u)u}{|x|^\alpha} ~dx -\int_{\Omega} \left(\int_{\Omega}\frac{F(u)}{|x-y|^\mu |y|^\alpha} ~dy \right) \frac{f'(u) u^2}{|x|^\alpha} ~dx <0.
\end{split}
\end{equation*}
This implies that $N_{\la,M}^-\subset \Gamma$ and then following step $1$ of Lemma \ref{inf} we get that $\exists\ c>0,\ \|u\| \geq c>0$ for any $u \in N^-_{\la,M}$ from which it follows that $N^-_{\la,M}$ is a closed set. Also this gives $\inf_{u \in N^-_{\la, M}\backslash \{0\}} B(u) \geq \tilde{c} >0.$ Therefore, for $\la < \la_0$ small enough,
%
\begin{equation*}\label{xi-argument}
\displaystyle\inf_{u\in N^-_{\la,M}\backslash \{0\}}B(u)+ 2 b\|u\|^2 -\left(3-q\right)\lambda H(u)-3 \int_\Om \left(\int_{\Omega} \frac{F(u)}{|x-y|^\mu |y|^\alpha} ~dy\right) \frac{f(u)u}{|x|^\alpha} ~dx>0.
\end{equation*}
Now let $\theta^- = \min_{u \in N^-_{\la, M}\backslash \{0\}} \mathcal{J}_{\la, M}(u) > - \infty$ then from Ekeland variational principle, we know that there exist $\{v_k\}_{k\in \mathbb{N}}$ a minimizing sequence such that
$$\mathcal{J}_{\la, M}(v_k) \leq \inf_{u \in N^-_{\la, M}} \mathcal{J}_{\la,M}(u) + \frac{1}{k} \ \text{and} \ \mathcal{J}_{\la, M}(u) \geq \mathcal{J}_{\la, M}(v_k) - \frac{1}{k}\|v_k-u\|\ \ \forall \ u \in N^-_{\la, M}.$$ From $\mathcal{J}_{\la,M}(v_k) \to \theta^-$ as $k\to\infty$ and $v_k \in N_{\la, M}$, it is easy to prove  that $\|v_k\| \leq C$ (as in Lemma \ref{kc-PS-bdd}).
Indeed,
\begin{align*}
\left| a\|v_k\|^{4}+b \|v_k\|^2 -\lambda H(v_k)- \int_\Om \left(\int_{\Omega} \frac{F(v_k)}{|x-y|^\mu |y|^\alpha} ~dy\right) \frac{f(v_k)v_k}{|x|^\alpha} ~dx \right| =o(\|v_k\|)
\end{align*}
and
\begin{align*}
& C+o(\|v_k\|)\geq \mathcal{J}_{\la,M}(v_k)- {\frac{1}{4}} \langle \mathcal{J}^{'}_{\la, M}(v_k),v_k \rangle
\geq {\frac{b}{4}} \|v_k\|^{2n} -C(\lambda)\|v_k\|^{q+1}
\end{align*}
implies that $\|v_k\| \leq C.$ Thus we get $\|S(v_k)\|_* \leq C_1$ and from \eqref{xi-argument} we have $\|\xi_k^-(0)\|_* \leq C_2$. Now the rest of the proof follows as in the proof of Theorem \ref{exis1} with the help of Lemma~\ref{compli2} (refer Theorem 4.13 of \cite{AGMS}).
\hfill{\QED} 
\end{proof}\\
\textbf{Proof of Theorem \ref{first}} : The proof follows from Theorem \ref{exis1} and Theorem \ref{exis3}.
\hfill{\QED}

\end{document}